\newcommand{\ignore}[1]{}
\newtheorem{theorem}{Theorem}[section]
\newtheorem{lemma}[theorem]{Lemma}
\newtheorem{proposition}[theorem]{Proposition}
\newtheorem{claim}[theorem]{Claim}
\newtheorem{corollary}[theorem]{Corollary}
\def\eps{{\epsilon}}
\def\conv{{\sf conv}}
\def\A{{\cal A}}
\def\Y{{\cal Y}}
\def\F{{\cal F}}
\def\L{{\cal L}}
\def\K{{\cal K}}
\def\W{{\cal W}}
\def\R{{\mathcal R}}
\def\reals{{\mathbb R}}
\def\G{{\cal G}}
\def\E{{\cal{E}}}
\begin{document}

\begin{titlepage}

\title{An Improved Bound for Weak Epsilon-Nets in the Plane\footnote{
The project leading to this application has received funding from European Research Council (ERC)
under the European Unions Horizon 2020 research and innovation programme under grant agreement No. 678765. An extended abstract of this work has appeared in {Proceedings of the 59th Annual IEEE Symposium on 
Foundations of Computer Science}, 
October 7-9, 2018, Paris, France.}}

\author{
Natan Rubin\thanks{Email: {\tt rubinnat.ac@gmail.com}. Ben Gurion University of the Negev, Beer-Sheba. Also supported
by grant 1452/15 from Israel Science Foundation and by grant 2014384 from the U.S.-Israeli Binational Science Foundation.} }

\maketitle

\begin{abstract}

We show that for any finite point set $P$ in the plane and $\eps>0$ there exist $\displaystyle O\left(\frac{1}{\eps^{3/2+\gamma}}\right)$ points in $\reals^2$, for arbitrary small $\gamma>0$, that pierce every convex set $K$ with $|K\cap P|\geq \eps |P|$. This is the first improvement of the bound of $\displaystyle O\left(\frac{1}{\eps^2}\right)$ that was obtained in 1992 by Alon, B\'{a}r\'{a}ny, F\"{u}redi and  Kleitman for general point sets in the plane.
\end{abstract}

\maketitle


\end{titlepage}

\section{Introduction} \label{sec:intro}
\paragraph{Transversals and $\eps$-nets.} Given a family $\K$ of geometric ranges in $\reals^d$ (e.g., lines, triangles, or convex sets),
we say that $Q\subset \reals^d$ is a transversal to $\K$ (or $Q$ pierces $\K$) if each $K\in \K$ is pierced by at least one point of $Q$.
Given an underlying set $P$ of $n$ points, we say that a range $K\in \K$ is {\it $\eps$-heavy} if $|P\cap K|\geq \eps n$.
We say that $Q$ is an {\it $\eps$-net} for $\K$ if it pierces every $\eps$-heavy range in $\K$.
We say that an $\eps$-net for $\K$ is a {\it strong $\eps$-net} if $Q\subset P$, that is, the points of the net are drawn from the underlying point set $P$. Otherwise (i.e., if $Q$ includes additional points outside $P$), we say that $Q$ is a {\it weak \it $\eps$-net}.

The study of $\eps$-nets was initiated by Vapnik and Chervonenkis \cite{VC1971}, in the context of statistical learning theory.
Following a seminal paper of Haussler and Welzl \cite{HW87}, $\eps$-nets play a central role in discrete and computational geometry \cite{Trends}.
For example, bounds on $\eps$-nets determine the performance of the best-known algorithms for minimum hitting set/set cover problem in geometric hypergraphs \cite{NetsRectangles,BroGood,ClaVar,Even}, and the transversal numbers of families of convex sets \cite{AlonKalai,AKMM,AlonKleitman,Shakhar}. 

Informally, the cardinality of the smallest possible $\eps$-net for the range set $\K$ determines the integrality gap of the corresponding transversal problem -- the ratio between (1) the size of the smallest possible transversal $Q$ to $\K$ and (2) the weight of the ``lightest" possible fractional transversal to $\K$ \cite{AlonKleitman,AlonKalai,Even}.

Haussler and Welzl \cite{HW87} proved in 1986 the existence of strong $\eps$-nets of cardinality $O\left(\frac{1}{\eps}\log\frac{1}{\eps}\right)$ for families of so called semi-algebraic ranges of bounded description complexity in $d$-space\footnote{Namely, each of these sets is the locus of all the points that satisfy a given Boolean combination of a bounded number of algebraic equalities and inequalities of bounded degree in the Euclidean coordinates $x_1,\ldots,x_d$ \cite{SemiAlgebraic}.}, for a fixed $d>0$ (e.g., lines, boxes, spheres, halfspaces, or simplices), by observing that their induced hypergraphs have a bounded Vapnik-Chervonenkis dimension (so called {\it VC-dimension}).\footnote{The constant hidden within the $O(\cdot)$-notation is specific to the family of geometric ranges under consideration, and is proportional to the VC-dimension of the induced hypergraph.}
While the bound is generally tight for set systems with a bounded VC-dimension \cite{KPW90}, better constructions are known for several special families of geometric ranges, including tight bounds for discs in $\reals^2$, halfplanes in $\reals^2$ and halfspaces in $\reals^3$ \cite{ClaVar,KPW90,MSW90}, and rectangles in $\reals^2$ and boxes in $\reals^3$ \cite{NetsRectangles,PachTardos}. We refer the reader to a recent state-of-the-art survey \cite{HandbookNets} for the best known bounds.

In particular, it had long been conjectured that all the ``natural" geometric instances, that involve simply-shaped geometric ranges in a fixed-dimensional Euclidean space $\reals^d$, admit a strong $\eps$-net 
of cardinality $O(1/\eps)$ (where the constant of proportionality depends on the VC-dimension).
The conjecture was refuted in 2010 by Alon \cite{Alon} who used a density version of  Hales-Jewett Theorem \cite{HJ} to show that some families of geometric ranges (e.g., lines in the Euclidean plane) require an $\eps$-net whose cardinality is larger than $O(1/\eps)$. Pach and Tardos \cite{PachTardos} subsequently demonstrated that the multiplicative term $\Theta\left(\log 1/\eps\right)$ is necessary for strong $\eps$-nets with respect to halfspace ranges in dimension higher than $3$.

\paragraph{Weak $\eps$-nets for convex sets.} In sharp contrast to the case of simply-shaped ranges, no constructions of small-size strong $\eps$-nets exist for general families of convex sets in $\reals^d$, for $d\geq 2$.  For example, given an underlying set of $n$ points in convex position in $\reals^2$, any strong $\eps$-net with respect to convex ranges must include at least $n-\eps n$ of the points. Informally, this  phenomenon can be attributed to the fact that 
the VC-dimension of a geometric set system is closely related to the {\it description complexity} of the underlying ranges, and it is unbounded for general convex sets.
Nevertheless, B\'{a}r\'{a}ny, F\"{u}redi and Lov\'{a}sz \cite{BFL} observed in 1990 that families of convex sets in $\reals^2$ still admit weak $\eps$-nets of cardinality $O(\eps^{-1026})$.
Alon, B\'{a}r\'{a}ny, F\"{u}redi, and Kleitman \cite{AlonSelections} were the first to show in 1992 that families of convex sets in any dimension $d\geq 1$ admit weak $\eps$-nets whose cardinality is bounded in terms of $1/\eps$ and $d$. The subsequent study and application of weak $\eps$-nets bear strong relations to convex geometry, including Helly-type, Centerpoint and Selection Theorems; see \cite[Sections 8 -- 10]{JirkaBook} for a comprehensive introduction. 

\paragraph{Weak $\eps$-nets and the Hadwiger-Debrunner Problem.} Alon and Kleitman \cite{AlonKleitman} used the boundedness of weak $\eps$-nets to confirm a long-standing {\it $(p,q)$-conjecture} by Hadwiger and Debrunner \cite{HD}. To this end, we say that a family $\K$ of convex sets satisfies the {\it $(p,q)$-property}  if any its $p$-size subfamily $\K'\subset \K$ contains a $q$-size subset $\K''\subset \K$ with a non-empty common intersection $\bigcap \K''\neq \emptyset$.
Hadwiger and Debrunner conjectured (and Alon and Kleitman showed) that for every positive integers $p,q$ and $d$ that satisfy $p\geq q\geq d+1$, there exists an integer $C_d(p,q)<\infty$ so that the following statement holds: Any family $\K$ of convex sets in $\reals^d$ with the $(p,q)$-property admits a transversal by at most $C_d(p,q)$ points.\footnote{The celebrated Helly Theorem yields a transversal by a {\it single} point in the case $p=q=d+1$ whenever $|\K|\geq d+1$.}
Showing good quantitative estimates for the Hadwiger-Debrunner numbers $C_d(p,q)$ requires tight asymptotic bounds for weak $\eps$-nets; see the latest study by Keller, Smorodinsky and Tardos \cite{Shakhar}, and the concluding discussion in Section \ref{Sec:Conclude}.

\medskip
Very recently, lower bounds for several of the above questions -- including strong and weak $\eps$-nets with respect to line ranges \cite{BS18}, and the 2-dimensional Hadwiger-Debrunner numbers $C_2(p,q)$ \cite{KS18} -- were improved using the novel combinatorial machinery of hypergraph containers \cite{BMS,SaxTh}.

\paragraph{Bounds on weak $\eps$-nets.} For any $\eps>0$ and $d\geq 0$, let $f_d(\eps)$ be the smallest number $f>0$ so that, for any underlying finite point set $P$, one can pierce all the $\eps$-heavy convex sets
using only $f$ points in $\reals^d$. 
It is
an outstanding open problem in Discrete and Computational geometry to determine the true asymptotic behaviour of $f_d(\eps)$ in dimensions $d\geq 2$. As Alon, Kalai, Matou\v{s}ek, and Meshulam noted in 2001: ``{\it Finding the correct estimates for weak $\eps$-nets is, in our opinion, one of the truly important open problems in combinatorial geometry"} \cite{AKMM}.

\medskip
Alon {\it et al.} \cite{AlonSelections} (see also \cite{AlonKleitman}) used Tverberg-type results to show that $f_d(\eps)=O(1/\eps^{d+1-1/\beta_d})$ (where $0<\beta_d<1$ is a selection ratio which is fixed for every $d$), and $f_2(\eps)=O\left(1/\eps^2\right)$.\footnote{An outline of the planar $f_2(\eps)=O\left(1/\eps^2\right)$ bound can be found in a popular textbook by Chazelle \cite{ChazelleBook}.}
The bound in higher dimensions $d\geq 3$ has been subsequently improved in 1993 by Chazelle {\it et al.} \cite{Chazelle} to roughly $\tilde{O}\left(\frac{1}{\eps^d}\right)$ (where $\tilde{O}(\cdot)$-notation hides multiplicative factors that are polylogarithmic in $\log 1/\eps$). Though the latter construction was somewhat simplified in 2004 by Matou\v{s}ek and Wagner \cite{MatWag04} using simplicial partitions with low hyperplane-crossing number \cite{PartitionTrees}, no improvements in the upper bound for general families of convex sets and arbitrary finite point sets occurred for the last 25 years,  in any dimension $d\geq 2$.

In view of the best known lower bound of $\Omega\left(\frac{1}{\eps} \log^{d-1}\left(\frac{1}{\eps}\right)\right)$ for $f_d(\eps)$ due to Bukh, Matou\v{s}ek and Nivasch \cite{Staircase},  it still remains to settle whether the asymptotic behaviour of this quantity substantially deviates from the long-known ``almost-$(1/\eps)$" bounds on strong $\eps$-nets (e.g., for lines and triangles in $\reals^2$ or simplices in $\reals^d$)? 

The only interesting instances in which the gap has been essentially closed, involve special point sets \cite{Chazelle,Sphere,AlonChains}.
For example, Alon {\it et al.} \cite{AlonChains} showed in 2008 that any finite point set in {\it a convex position in $\reals^2$} allows for a weak $\eps$-net of cardinality $\displaystyle O\left(\alpha(\eps)/\eps\right)$ with respect to convex sets, where $\alpha(\cdot)$ denotes the inverse Ackerman function.

\paragraph{Our result and organization.} We provide the first improvement\footnote{A follow-up study of the author \cite{STOC} establishes a similar improvement in all dimensions $d\geq 3$. Its remarkable that the higher-dimensional argument does not yield Theorem \ref{Thm:Main} for $d=2$; see Section \ref{Sec:Conclude} for a brief comparison.} of the general bound in $\reals^2$.

\begin{theorem}\label{Thm:Main}
We have
$$
f_2(\eps)=O\left(\displaystyle\frac{1}{\eps^{3/2+\gamma}}\right),
$$ 
for any $\gamma>0$.

That is, for any underlying set $P$ of $n$ points in $\reals^2$, and any $\eps>0$, one can construct a weak $\eps$-net with respect to convex sets whose cardinality is $O\left(\displaystyle\frac{1}{\eps^{3/2+\gamma}}\right)
$; here $\gamma>0$ is an arbitrary small constant which does not depend on $\eps$.\footnote{We do not seek to optimize the implicit constant of proportionality within $O(\cdot)$, which heavily depends on $\gamma>0$. The particular recurrence scheme that we establish for $f_2(\eps)$ in Section \ref{Subsec:WrapUp} results in a constant that is super-exponential in $1/\gamma$.}
\end{theorem}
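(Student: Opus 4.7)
The plan is to combine a Matou\v{s}ek-type simplicial partition of $P$ with a case analysis of each $\eps$-heavy convex set $K$ according to how it interacts with the partition, exploiting the fact that, within a single cell of the partition, a convex region whose boundary crosses the cell is effectively a half-plane and so admits efficient piercing by a linear-size half-plane $\eps$-net. The classical $O(1/\eps^2)$ bound treats all $\eps$-heavy $K$ uniformly by recursion inside cells, which is precisely the step the improvement must avoid.

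Fix $r=\Theta(1/\eps)$ and build a simplicial partition $\Pi=\{\Delta_1,\ldots,\Delta_r\}$ of $P$ with $|P\cap \Delta_i|=\Theta(\eps n)$ and line-crossing number $O(\sqrt{r})$; in the plane, and up to polylogarithmic slack, the boundary of every convex body crosses at most $O^*(\sqrt{r})=O^*(1/\sqrt{\eps})$ simplices. Call an $\eps$-heavy convex $K$ \emph{interior-heavy} if at least $\eps n/2$ of $P\cap K$ lies in simplices entirely contained in $K$, and \emph{boundary-heavy} otherwise. Interior-heavy sets are handled cheaply: each contains a complete simplex, so picking one representative per $\Delta_i$ pierces the whole family at a cost of $r=O(1/\eps)$ points. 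For a boundary-heavy $K$, pigeonhole over the $O^*(\sqrt{r})$ simplices crossed by $\partial K$ yields a simplex $\Delta_i$ in which $K$ carries an $\Omega(\sqrt{\eps})$-fraction of $P\cap \Delta_i$, and the key geometric observation is that $K\cap\Delta_i$ is bounded by a single convex arc entering and leaving $\Delta_i$ through two of its edges, so $K\cap\Delta_i$ is contained in one of the two half-planes cut off by the chord joining the entry and exit points. This reduces the piercing of boundary-heavy convex sets inside $\Delta_i$ to a problem whose primitive range is half-plane-like; since strong $\eps$-nets for half-planes have the tight linear size $O(1/\eps)$, an $\Omega(\sqrt{\eps})$-threshold within each cell should cost $O(1/\sqrt{\eps})$, and summed over the $r$ simplices this is $O(1/\eps^{3/2})$, matching the target exponent.

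The main obstacle -- and the source of the additive $\gamma$ in the exponent -- is that being contained in a half-plane is not the same as being a half-plane: a half-plane net placed inside $\Delta_i$ pierces the enveloping half-plane of $K\cap\Delta_i$ but need not pierce $K$ itself, and different boundary-heavy $K$'s may demand mutually incompatible chord approximations. My plan for closing this gap is to track, for each boundary-heavy $K$, a short combinatorial signature consisting of the ordered pair of entry/exit edges, a coarse approximation of the chord, and the side on which $K$ lies; to enumerate these signatures using planar-specific complexity bounds of Dey type for levels and $\le\!k$-sets (which have sub-quadratic exponent $4/3$, a phenomenon unavailable in higher dimensions); and to iterate the partition-plus-signature construction at a bounded number of refinement scales, each losing at most a polylogarithmic factor, so that the accumulated slack is absorbed into $\eps^{-\gamma}$. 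Making this signature-and-refinement argument rigorous -- i.e.\ ensuring that the auxiliary piercing set genuinely meets every boundary-heavy $K$ rather than only its enveloping half-plane, and that the signatures across cells and scales combine without blowing up the total count -- is where essentially all the work of Theorem~\ref{Thm:Main} has to lie.
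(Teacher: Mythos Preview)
Your proposal has two genuine gaps, and the paper's route is quite different from yours.

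\textbf{The crossing-number step is unjustified.} You assert that ``the boundary of every convex body crosses at most $O^*(\sqrt{r})$ simplices'' of a Matou\v{s}ek partition with $r=\Theta(1/\eps)$. The Simplicial Partition Theorem guarantees this only for \emph{lines}; a convex boundary can cross $\Theta(r)$ of the triangles (think of a circle threading a fan of thin wedges). Low crossing number for convex sets is not free: the paper spends Stage~2 (Propositions~\ref{Prop:Zone} and~\ref{Prop:Narrow1}) enforcing precisely this ``narrowness'' by adding an auxiliary net of size $O(r_0^2 r_1/\eps)$ built from vertices of a trapezoidal decomposition, and the convex sets that survive unpierced are exactly those whose zone coincides with the zone of a single segment. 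The Matou\v{s}ek--Wagner route you allude to also needs an auxiliary net, of size polynomial in $r$ with a large exponent, which would swamp your $1/\eps^{3/2}$ budget at $r=1/\eps$. Without the $O^*(\sqrt{r})$ crossing bound your pigeonhole gives only an $\Omega(\eps)$-fraction inside some cell, and the rest of the argument collapses to the old $1/\eps^2$ bound.

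\textbf{The half-plane reduction does not pierce $K$.} You already flag this, but your proposed repair --- ``signatures'' for the entry/exit chord plus Dey-type $k$-set bounds, iterated across scales --- is not a plan so much as a hope. The number of combinatorially distinct chords per triangle is $\Theta(n^2)$, not sub-quadratic, and $k$-level bounds control the complexity of a \emph{single} level, not the number of chord-types a family of convex sets can realise. Nothing in the proposal explains why the signature count times the per-signature piercing cost stays near $1/\eps^{3/2}$.

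For comparison, the paper never reduces to half-planes. It samples $r_1=\Theta(1/\sqrt{\eps})$ lines from those spanned by $P$, builds their vertical decomposition, and introduces a second recursion parameter: the density $\lambda$ of a \emph{restriction graph} $\Pi\subset\binom{P}{2}$ to which the surviving convex sets are $(\eps,\sigma)$-restricted. Narrow sets are handled by an expansion argument (sectors around each point, short edges within cells, Lemmas~\ref{Lemma:Thick} and~\ref{Lemma:SparseWedge}) that produces either many edges of $\binom{P}{2}$ crossing a fixed vertical line inside $K$, or a sparse subgraph $\Pi(i)$ containing $\Omega(\eps^2 n^2)$ edges of $K$. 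Both alternatives feed a recurrence in $(\eps,\lambda,\sigma)$ whose solution is the claimed bound. The $\gamma$ loss comes from the bounded-depth recursion in $\lambda$, not from polylog slack in a partition.
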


The main underlying idea of our proof of Theorem \ref{Thm:Main} is that one can find a small auxiliary net $Q'$, of $o\left(1/\eps^2\right)$ points, with the property that every $\eps$-heavy convex set $K$ that is missed by $Q'$ is ``line-like" -- the subset $P\cap K$ is largely determined by an edge that connects some pair of its points; in the sequel, these sets are called { narrow}. (As a matter of fact, the number of such ``proxy" edges for a narrow set $K$ is close to ${\eps n \choose 2}$.)  These narrow sets are pierced by a careful adaptation of the decomposition paradigm that was previously used to bound the number of point-line incidences in the plane \cite{SzT,ManyCells}.

\medskip
The rest of the paper is organized as follows:

\smallskip
In Section \ref{Sec:Prelim} we provide a comprehensive overview of our approach, lay down the recursive framework, and establish several basic properties that are used throughout the proof of Theorem \ref{Thm:Main}. 

In Section \ref{Sec:Main} we use the recursive framework of Section \ref{Sec:Prelim} to give a constructive proof of Theorem \ref{Thm:Main}. The eventual net combines the following elementary ingredients: (1) vertices of certain trapezoidal decompositions of $\reals^2$, (2) $1$-dimensional $\hat{\eps}$-nets, for $\hat{\eps}=\omega(\eps^2)$, which are constructed within few vertical lines with respect to carefully chosen point sets, and (3) strong $\hat{\eps}$-nets with respect to triangles in $\reals^2$.

In Section \ref{Sec:Conclude} we briefly summarize the properties of our construction and survey the future lines of work.

\section{Preliminaries}\label{Sec:Prelim}

\subsection{Proof outline}\label{Subsec:ProofOutline}

We briefly outline the main ideas behind our proof of Theorem \ref{Thm:Main}. We begin by sketching the $O\left(1/\eps^2\right)$ planar construction of Alon {\it et al.} \cite{AlonSelections} (or, rather, its more comprehensive presentation by Chazelle \cite{ChazelleBook}).

\paragraph{The quadratic construction.}  Refer to Figure \ref{Fig:Overview} (left).
We split the underlying point set $P$ by a vertical median line $L$ into subsets $P^-$ and $P^+$ (of cardinality $n/2$ each), and recursively construct a weak $(4\eps/3)$-net with respect to each of these sets. 
Let $K$ be an $\eps$-heavy convex set; in what follows, we assign to each such set $K$ a unique subset $P_K\subseteq P\cap K$ of exactly $\lceil \eps n\rceil$ points. If at least $3\eps n/4$ points of $P_K$ lie to the same side of $L$, we pierce $K$ by one of the auxiliary $(4\eps/3)$-nets. Otherwise, the points of $P_K$ span at least $\eps^2n^2/16$ edges that cross $K\cap L$, so we can pierce $K$ by adding to our net each $(\eps^2n^2/16)$-th crossing point of $L$ with the edges of ${P\choose 2}$.\footnote{In the sequel we use ${A\choose 2}$ to denote the complete set of edges spanned by a (finite) point set $A\subset \reals^2$.} 

The above argument yields a recurrence of the form $f_2(\eps)\leq 2f_2\left(4\eps/3\right)+16/\eps^2$ which bottoms out when $\eps$ surpasses $1$ (in which case we use the trivial bound $f(\eps)\leq 1$ for all $\eps\geq 1$).

\begin{figure}
    \begin{center}
    \input{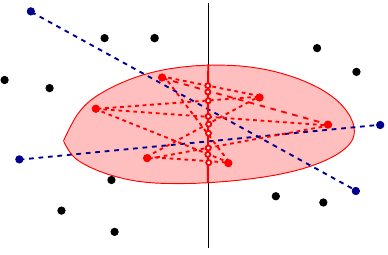_t}\hspace{2cm}\input{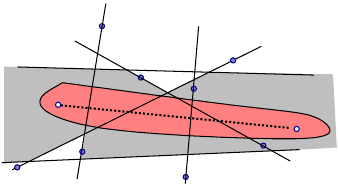_t}
        \caption{\small Left: Constructing the net of cardinality $O(1/\eps^2)$. If the points of $P_K$ are well distributed between $P^-$ and $P^+$, the intercept $K\cap L$ is crossed by $\Theta(\eps^2n^2)$ edges of ${P_K\choose 2}$. Notice that the intercept $K\cap L$ can be crossed by many edges outside ${P_K\choose 2}$. Right: Our decomposition of $\reals^2$ uses cells of the arrangement of certain lines which are sampled from among the lines spanned by $P$. The depicted set $K$ is narrow -- its zone is also the zone of the principal edge $pq$.}
    \label{Fig:Overview}
   \end{center}
   
\end{figure}

Notice that the above approach immediately yields a net of size $o(1/\eps^2)$ for sets $K$ that fall into one of the following favourable categories:

\begin{enumerate}

\item  The interval $K\cap L$ is crossed by more than $\Theta(\eps^2 n^2)$ edges of ${P\choose 2}$, with either one or both of their endpoints lying outside $K$.

For example, we need only $1/\delta=o\left(1/\eps^2\right)$ points to pierce such sets $K$ whose cross-sections $K\cap L$ contain at least $\delta n^2=\omega\left(\eps^2n^2\right)$ intersection points of $L$ with the edges of ${P\choose 2}$.

\item At least a fixed fraction of the $\Omega\left(\eps^2n^2\right)$ edges spanned by $P_K$ belong to a relatively sparse subset $\Pi\subset{P\choose 2}$ of cardinality $m=o(n^2)$. This subset $\Pi$ is carefully constructed in advance and does not depend on the choice of $K$. 

This too leads to a net of size $O\left(m/(\eps^2 n^2)\right)=o\left(1/\eps^2\right)$ provided that a large fraction of these edges of $\Pi$ end up crossing $L$. (In other words, the endpoints of these edges must be sufficiently spread between the halfplanes of $\reals^2\setminus L$.)

\end{enumerate}

\paragraph{Decomposing $\reals^2$.} To force at least one of the above favourable scenarios, we devise a randomized decomposition of $\reals^2$ and $P$. Rather than using a single line to split $\reals^2$ into halfplanes, we use a subset $\R$ of $r=o\left(1/\eps\right)$ lines that are chosen at random from among the lines that support the edges of ${P\choose 2}$, and consider their entire {\it arrangement} $\A(\R)$ -- the decomposition of $\reals^2\setminus \bigcup \R$ into open $2$-dimensional faces. (See Section \ref{Subsec:Essentials} for the precise definition of an arrangement, and its essential properties.)
We use the ${r\choose 2}=o\left(1/\eps^2\right)$ vertices of $\A(\R)$ to construct a small-size point set $Q$ with the following property:
Every convex set $K$ that is {\it not} pierced by $Q$ must demonstrate a ``line-like" behaviour with respect to $\A(\R)$ -- its zone (namely, the 2-faces intersected by $K$) must be contained, to a large extent, in the zone of a single edge $pq\in {P_K\choose 2}$. See Figure \ref{Fig:Overview} (right). In what follows, we will refer to such convex sets as {\it narrow}. Though such a ``proxy" edge $pq$ can be selected from ${P_K\choose 2}$ in (almost) $\Theta(\eps^2n^2)$ ways, we will assign a unique ``proxy" edge $pq\in {P_K\choose 2}$ to every narrow convex set $K$, and refer to that edge $pq$ as the {\it principal edge} of $K$.

\paragraph{Representing narrow convex sets by edges.} The fundamental difficulty of representing and manipulating convex sets (as opposed to lines, segments, simplices, and other simply-shaped geometric objects) is that they can cut the underlying point set $P$ into exponentially many subsets $P\cap K$, so the standard divide-and-conquer schemes  \cite{ManyCells} hardly apply in this setting. Fortunately, every narrow convex set $K$ can be largely described by its principal edge $pq\in {P_K\choose 2}$. (For example, $K$ cannot include points outside the respective zone of $pq$.)

\begin{figure}
    \begin{center}
        \input{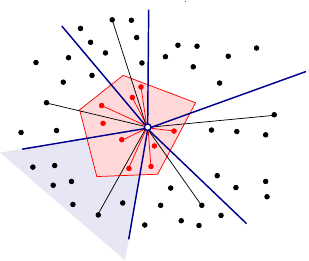_t}\hspace{2cm}\input{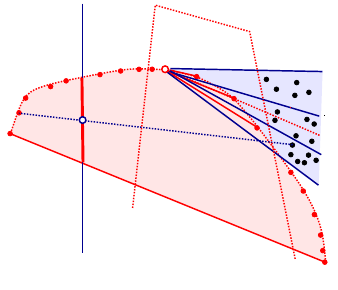_t}
        \caption{\small Left: We partition the plane into $z=O(1/\eps)$ sectors $\W_j(u)$, each containing roughly $\eps n$ outgoing edges $uv$, and an average amount of $O(\eps n/r^2)$ outgoing short edges. Right: The point $u$ with the outgoing short edges that are ``parallel" to the principal edge $pq$, and whose supporting lines are roughly tangent to $K$. In Case 1, the $\Omega(\eps n/r)$ outgoing short edges of $u$ within $\Delta\cap K$ occupy multiple sectors $\W_j(u)$ which are almost tangent to $K$. This yields $\omega(\eps^2n^2)$ segments that cross the intercept $K\cap L$.}
        \label{Fig:Sectors}
    \end{center}
    \vspace{-0.3cm}
\end{figure}

\paragraph{From narrowness to expansion.} The main geometric phenomenon behind our choice of the sparse (i.e., non-dense) subset $\Pi\subset {P\choose 2}$ is that the ``expected" rate of expansion
of $P_K$ within the arrangement $\A(\R)$ from a point $u\in P_K$, for a narrow convex set $K$, is generally lower than that of the entire set $P$ from that same point.\footnote{To this end, we define the pseudo-distance between a pair of points $u,v\in \reals^2$ as the number of lines in $\R$ that are crossed by the open segment $uv$; see \cite[Section 2.8]{ChazelleBook} and \cite{ChazelleWelzl}. For a finite set $A\subset \reals^2$, and a point $u\in A$, we examine the ``expected" order of magnitude of the volume $|A\cap D(u,\theta)|$ of the disc $D(u,\theta)$ as a function of $\theta\geq 0$. Clearly, this informal notion is related to the more standard concepts of doubling dimension \cite{Doubling} and graph expansion \cite{Expanders}.}

To illustrate this behaviour, assume first that the points of $P$ are evenly distributed among the cells of $\A(\R)$, so each cell contains roughly $n/r^2$ points.  We say that an edge $uv\in {P\choose 2}$ is {\it short} if both of its endpoints lie in the same cell of $\A(\R)$.

For each point $u$ of $P$ we partition the plane into $z=\Theta\left(\frac{1}{\eps}\right)$ sectors
 $\W_1(u),\W_2(u),\ldots,\W_z(u)$ so that each sector encompasses $\Theta(\eps n)$ outgoing edges $uv\in {P\choose 2}$; see Figure \ref{Fig:Sectors} (left). To pierce a narrow convex set $K$ whose zone in $\A(\R)$ is traced by an edge $pq\in {P_K\choose 2}$,  we combine the following key observations:

\begin{enumerate}

\item[i.] For an {\it average point $u$} in $P_K$, its cell $\Delta$ contains at least $\eps n/(r+1)$ points of $P_K$, which are connected to $u$ by short edges (because $K$ crosses at most $r+1$ cells of $\A(\R)$). 

\item[ii.]  For an {\it average edge $uv\in {P\choose 2}$}, the respective sector $\W_j(u)$ contains only $O\left(\eps n/r^2\right)$ short edges.
\end{enumerate}

We further guarantee that the points of $P_K$ are in a sufficiently convex position,
and are substantially distributed in the zone of $K$: The former property is enforced by using a strong $\hat{\eps}$-net \cite{HW87}, with $\hat{\eps}=\Theta(\eps/r)$, to eliminate the forbidden convex sets $K$, whereas the latter condition is enforced using a suitably amplified version of the prior line-splitting argument.
Thus, for $\Omega(\eps n)$ choices of $u\in P_K$, we can assume that both endpoints of the principal edge $pq\in {P\choose 2}$ of $K$ lie outside the cell $\Delta$ of $u$, and at least half of the $\Omega(\eps n/r)$ points $v\in P_K\setminus \{p\}$ within $\Delta$ (which exist by property (i)) lie to the same side of $pq$ as $u$. By the near convex position of $P_K$, most lines spanned by such short edges $uv$ within $\Delta$ are roughly tangent to the convex hull of $P_K$; see Figure \ref{Fig:Sectors} (right). (In particular, the four points $p,u,v,q$ form a convex quadrilateral.) 

Assume with no loss of generality that at least half of the above short edges $uv$ are parallel to $pq$, in the sense that the four points $p,u,v,q$ appear in this order along their convex hull. Since an average sector $\W_j(u)$ contains only $O(\eps n/r)$ such edges, we interpolate between the following scenarios.

\paragraph{Case 1.} The wedge spanned by the above $\Omega(\eps n/r)$ short edges $uv\in {P_K\choose 2}$ (along with $uq$) occupies $r$ ``average" sectors $\W_{j}(u),\W_{j+1}(u),\ldots, \W_{j+r}(u)$, which are almost tangent to $K$. We show that the points of $P$ within $\W_j(u)\cup \W_{j+1}(u)\cup\ldots\cup\W_{j+r}(u)$ yield $r\eps^2n^2$ adjacent edges that cross the intercept $K\cap L$ of $K$ with the ``middle" vertical line $L$ that we use to split the points of $P$. (Again, see Figure \ref{Fig:Sectors} (right).) Hence, the intersection of $K\cap L$ is relatively ``thick", so we can pierce such sets using $O\left(1/(r\eps^2)\right)$ points.

\paragraph{Case 2.} The previous scenario does not occur. Using the near-convexity of $P_K$, we find $\Omega(\eps n)$ outgoing edges of $u$ within ${P_K\choose 2}$ that are parallel to $pq$ in the above sense and occupy a constant number of {\it rich} sectors $\W_j(u)$ with at least $\Omega(\eps n/r)$ short edges. 

Property (ii) implies that there exist $O\left(1/(r\eps)\right)$ rich sectors $\W_j(u)$, which encompass a total of $O(n/r)$ edges that emanate from $u$.
To pierce such convex sets $K$ that fall into Case 2, we define our sparse set $\Pi\subset {P\choose 2}$ as the set of edges $uv$ which lie in rich sectors $\W_j(u)$, $\W_{j'}(v)$ (for at least one of the respective endpoints $u$ or $v$). Note that the construction does not depend on the convex set $K$. It is easy to check that $P_K$ spans at least $\Omega(\eps^2n^2)$ such edges within $\Pi$, and sufficiently many of these edges must cross $L$. Hence, $K$ falls into the second favourable case.

\paragraph{The vertical decomposition.} Since the actual distribution of $P$ in $\A(\R)$ is not necessarily uniform, we subdivide the cells of $\A(\R)$ into a total of $O(r^2)$ more homogeneous trapezoidal cells, so that each cell contains at most $n/r^2$ points of $P$.\footnote{A similar decomposition was used, e.g., by Clarkson {\it et al.} \cite{ManyCells} to tackle the closely related problem of bounding generalized point-line incidences (e.g., incidences between points and unit circles, or incidences between lines and certain cells of their arrangement); the relation between the two problems is briefly discussed in the concluding Section \ref{Sec:Conclude}.} To adapt the preceding expansion argument to the faces of the resulting decomposition $\Sigma$, we extend the notion of narrowness to $\Sigma$ and guarantee that every narrow convex set $K$ crosses only a small fraction of the faces in $\Sigma$.
More specifically, the (strong) Epsilon Net Theorem implies that any trapezoidal cell $\tau$ is crossed by $O(n^2\log r/r)$ of the lines that support the edges of ${P\choose 2}$,\footnote{In other words, $\Sigma$ is a $\Theta(r/\log r)$-cutting \cite{Cuttings} of $\reals^2$ with respect to these lines.} so an average edge of ${P\choose 2}$ crosses only $O(r\log r)$ trapezoidal cells of $\Sigma$. 
As a result, a ``typical" narrow convex set $K$ (whose zone in $\Sigma$ can ``read off" from its principal edge $pq$ within ${P_K\choose 2}$) crosses relatively few faces of $\Sigma$; in other words, $K$ has a {\it low crossing number} with respect to $\Sigma$.

The ``exceptional" convex sets $K$, which cross too many faces of $\Sigma$, are dispatched separately using that, for $\Theta(\eps^2n^2)$ of their edges, their supporting lines cross too many cells $\Sigma$ and, thereby, belong to another sparse subset of ${P\choose 2}$.

\paragraph{Discussion.} 
Our trapezoidal decomposition $\Sigma$  of $\reals^2$ overly resembles the first step of the proof of the Simplicial Partition Theorem of Matou\v{s}ek \cite{PartitionTrees} in dimension $d=2$, which provides $s=O(r^2)$ triangles $\Delta_1,\ldots,\Delta_s$ so that each triangle $\Delta_i$ contains $\Theta(n/s)$ points, and any line in $\reals^2$ crosses $O(\sqrt{s})=O(r)$ of these triangles. 

It is instructive to compare our approach to the partition-based technique of Matou\v{s}ek and Wagner \cite{MatWag04}, which directly uses the above theorem in $\reals^d$ to re-establish the near-$1/\eps^d$ bounds of Alon {\it et al.} \cite{AlonSelections} (in $\reals^2$) and Chazelle {\it et al.} \cite{Chazelle} (in any dimension $d\geq 2$), via a simple recursion on the point set and the parameter $\eps$.

Notice that the triangles $\Delta_i$ in the Simplicial Partition Theorem, for $1\leq i\leq s$, cannot be related to particular cells of any single arrangement of lines. 
To enforce a low crossing number among the convex sets $K$ with respect to the partition $\{\Delta_1,\ldots,\Delta_s\}$,  Matou\v{s}ek and Wagner pick a point $p_i$ in each triangle $\Delta_i$ and pierce the ``exceptional" sets by an auxiliary net of $O\left(s^{O\left(d^2\right)}\right)$ centerpoints which are obtained via Rado's Centerpoint Theorem \cite{JirkaBook} for all the possible subsets of $\{p_i\mid 1\leq i\leq s\}$. 
Hence, the cardinality of their net heavily depends on the size  $s$ of the partition. Unfortunately, their simplicial partition does not quite suit our analysis, which needs a relatively large number of cells to achieve a substantial improvement over the $O\left(1/\eps^2\right)$ bound.

\subsection{The recursive framework.}\label{Subsec:RecursiveFramewk}
We refine the notation of Section \ref{sec:intro} and lay down the formal framework in which our construction  and its analysis are cast.


\medskip
\noindent{\bf Definition.} For a finite point set $P$ in $\reals^2$ and $\eps>0$, let $\K(P,\eps)$ denote the family of all the $\eps$-heavy convex sets with respect to $P$.
We then say that $Q\subset \reals^2$ is {\it a weak $\eps$-net} for a family of convex sets $\G$ in $\reals^2$ if it pierces every set in $\G\cap \K(P,\eps)$.

\medskip
If the parameter $\eps$ is fixed, we can assume that each set in $\K$ is $\eps$-heavy, so $Q$ is simply a point transversal to $\K$.
Note also that every weak $\eps$-net with respect to $P$ is, in particular, a weak $\eps$-net with respect to any subfamily $\K$ of convex sets in $\reals^2$.

\medskip
Notice that the previous constructions \cite{AlonChains,Chazelle,MatWag04} employed recurrence schemes
in which every problem instance $(P,\eps)$ was defined over a finite point set $P$, and sought to pierce each $\eps$-heavy convex set $K\in \K(P,\eps)$ using the smallest possible number of points. This goal was achieved in a divide-and-conquer fashion, by tackling a number of simpler sub-instances $(P',\eps')$ with a smaller point set $P'\subset P$ and a larger parameter $\eps'>\eps$.
 
To amplify our sub-quadratic bound on $f_2(\eps)$, we employ a somewhat more refined framework: each recursive instance is now endowed not only with the underlying point set $P$, but also with a certain subset of edges $\Pi\subset {P\choose 2}$ which contains a large fraction of the edges spanned by the points of $P\cap K$. Thus, our recurrence can advance not only by increasing the parameter $\eps$, but also by restricting the convex sets to ``include" $\Theta\left(\eps^2n^2\right)$ edges of the progressively sparser subset $\Pi$.

 \paragraph{Definition.}
Let $\Pi\subset {P\choose 2}$ be a subset of edges spanned by the underlying $n$-point set $P$.
Let $\sigma>0$. We say that a convex set $K$ is {\it $(\eps,\sigma)$-restricted} to the graph $(P,\Pi)$ if $P\cap K$ contains a subset $P_K$ of $\lceil \eps n\rceil$ points so that the induced subgraph $\Pi_K={P_K\choose 2}\cap \Pi$ contains at least $\sigma{\lceil\eps n\rceil\choose 2}$ edges. (In particular, each $(\eps,\sigma)$-restricted set $K$ must be $\eps$-heavy with respect to $P$.) 

Notice that the choice of the set $P_K$ may not be unique, and that $K$ may enclose additional points of $P$. To simplify the presentation, in the sequel we select a unique witness set $P_K$ for every convex set $K$ that is $(\eps,\sigma)$-restricted to $(P,\Pi)$.

\medskip
At each recursive step we construct a weak $\eps$-net $Q$ for a certain family  $\K=\K(P,\Pi,\eps,\sigma)$ of convex sets which is determined by $\eps>0$, a ground set $P\subset \reals^2$ of $n$ points, a set of edges $\Pi\subseteq {P\choose 2}$, and a threshold $0<\sigma\leq 1$. This family $\K$ consists of all the convex sets $K$ that are $(\eps,\sigma)$-restricted to $(P,\Pi)$.
In what follows, we refer to $(P,\Pi)$ (or simply to $\Pi$) as the {\it restriction graph}, and to $\sigma$ as the {\it restriction threshold} of the recursive instance. 

\medskip
The topmost instance of our recurrence involves $\Pi={P\choose 2}$ and $\sigma=1$.
Each subsequent instance $\K'=\K(P',\Pi',\eps',\sigma')$ involves a larger $\eps'$ and/or a {\it much sparser} restriction graph $(P',\Pi')$. 
Each such increase in $\eps$ or decrease in the density $|\Pi|/{n\choose 2}$ is accompanied only by a comparatively mild decrease in the restriction threshold $\sigma$ which, throughout the recurrence, is bounded from below by a certain positive constant.  

Our weak $\eps$-net construction bottoms out when either
(i) the cardinality of $P$ falls below a certain threshold $n_0(\eps)$ that is close to $1/\eps^{3/2}$, or (ii) $\eps$ surpasses a certain (suitably small) constant $0<\tilde{\eps}<1$, or (iii) the density $|\Pi|/{n\choose 2}$ of the restriction graph falls below $\eps$. In the first case, $P$ comprises the desired weak $\eps$-net.
In the second case, we can use the $O\left(\left(1/\tilde{\eps}\right)^2\right)=O(1)$ bound of Alon {\it et al.} \cite{AlonSelections}. In the third  case, the discussion in Section \ref{Subsec:ProofOutline}, which we formalize in Lemma \ref{Lemma:Sparse}, yields a simpler ``near-linear" recurrence in $1/\eps$. \footnote{As a matter of fact, we have $f(\eps,\lambda,\sigma)=o\left(1/\eps^2\right)$ once the maximum density $\lambda$ falls substantially below $1$ (given that  the restriction threshold $\sigma$ is a constant). Hence, our recurrence over $\Pi$ is used to merely amplify this gain.}

\medskip
\noindent {\bf Deriving a recurrence formula for $f_2(\eps)$.}
In the course of our analysis we stick with the following notation.  We use $f(\eps,\lambda,\sigma)$ to denote the smallest number $f$ so that for any finite point set $P$ in $\reals^2$, and any subset $\Pi\subseteq {P\choose 2}$ of density $|\Pi|/{n\choose 2}\leq \lambda$, there is a point transversal of size $f$ to $\K(P,\Pi,\eps,\sigma)$.
We set $f(\eps,\lambda,\sigma)=1$ whenever $\eps\geq 1$.
Since the underlying dimension $d=2$ is fixed, for the sake of brevity we use $f(\eps)$ to denote the quantity $f_2(\eps)=f(\eps,1,1)$, and note that the trivial bound $f(\eps,\lambda,\sigma)\leq f(\eps)$ always holds.

\medskip
In the sequel, we bound the quantity $f(\eps,\lambda,\sigma)$, for $\lambda>\eps$, by a recursive expression of the general form

\begin{equation}\label{Eq:GenRecurrence}
f(\eps,\lambda,\sigma)\leq f\left(\eps,\lambda/r_0,\sigma/2\right)+O\left(\sum_{i=1}^l h_i^{1+\gamma'} \cdot f\left(\eps\cdot h_i\right)+1/\eps^{3/2+\gamma'}\right),
\end{equation}

\noindent where $l$ is a constant that does not depend on the choice of $\gamma$ in Theorem \ref{Thm:Main}, $\gamma'$ is a small constant that satisfies $0<\gamma'<\gamma$, and the parameters $r_0$ and $h_i$, for $1\leq i\leq l$, are very small (albeit fixed) degrees of $1/\eps$ that are bounded by $(1/\eps)^{\gamma'}$. 

In particular, (\ref{Eq:GenRecurrence}) will hold for $f(\eps)=f(\eps,1,1)$.
As the recurrence in the density $\lambda$ bottoms out for $\lambda\leq \eps$, applying $J=\log_{r_0} \lceil 1/\eps\rceil$ substitution steps to the first term in (\ref{Eq:GenRecurrence}) while keeping $\eps$ fixed, and then using the bound in Lemma \ref{Lemma:Sparse} (e.g., with $r=\Theta\left(1/\eps^{\gamma'}\right)$) when $\lambda$ falls below $\eps$, will result in the following bound. 

\begin{equation}\label{Eq:GenRecurrenceSimple}
f(\eps)=O\left(r\cdot f\left(\eps\cdot r\right)+\sum_{i=1}^{l} h_i^{1+\gamma'} \cdot f\left(\eps\cdot h_i\right)+1/\eps^{3/2+\gamma'}\right).
\end{equation}

\noindent Notice that in each of the intermediate substitutions, the restriction threshold $\sigma$ in $f(\eps,\lambda,\sigma)$ remains bounded from below by $2^{-J}=\Theta(1)$.

As was previously mentioned, the recurrence in $0<\eps<1$ bottoms out when it bypasses a certain constant threshold $0<\tilde{\eps}<1$. With a sufficiently small (albeit, constant) choice of $\tilde{\eps}$ which too depends on $\gamma$, the standard and fairly general induction argument (as presented, e.g., in \cite{MatWag04,EnvelopesHigh} and \cite[Section 7.3.2]{SA}) shows that recurrences of the general form of (\ref{Eq:GenRecurrenceSimple}) solve to $f(\eps)=O\left(1/\eps^{\frac{3}{2}+\gamma}\right)$, where the constant of proportionality is super-exponential in $1/\gamma$. 

\medskip

\subsection{Geometric essentials: Arrangements and strong $\eps$-nets}\label{Subsec:Essentials}
\paragraph{Strong $\eps$-nets.} Let $X$ be a (finite) set of elements and $\F\subset 2^X$ be a set of hyperedges spanned by $X$. 
A {\it strong $\eps$-net} for the hypergraph $(X,\F)$
is a subset $Y\subset X$ of elements so that $F\cap Y\neq \emptyset$ is satisfied for all hyperedges $F\in \F$ with $|F|\geq \eps n$.

\paragraph{Definition.} Let $X$ be a set of $n$ elements, and $r>0$ be an integer. An {\it $r$-sample} of $X$ is a subset $Y\subset X$ of $r$ elements chosen at random from $X$, so that each such subset $Y\in {X\choose r}$ is selected with uniform probability $1/{n\choose r}$.

\medskip
The Epsilon-Net Theorem of Haussler and Welzl \cite{HW87} states that any such hypergraph $(X,\F)$, that is drawn from a so called range space of a bounded VC-dimension $D>0$, admits a strong $\eps$-net $Y$ of cardinality $r=O\left(\frac{D}{\eps}\log \frac{D}{\eps}\right)$. 
Moreover, such a net $Y$ can obtained, with probability at least $1/2$, by choosing an $r$-sample of $X$.

In particular, this implies the following result.

\begin{theorem} \label{Thm:StrongNet}
Let $P$ be a finite set of points in $\reals^2$, 
then one can pierce all the $\eps$-heavy triangles with respect to $P$ using a net $Q^\triangle(P,\eps)$ of cardinality $O\left(\frac{1}{\eps}\log \frac{1}{\eps}\right)$.
\end{theorem}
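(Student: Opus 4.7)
The plan is to realize Theorem \ref{Thm:StrongNet} as a direct corollary of the Haussler--Welzl Epsilon-Net Theorem, which has already been stated in the paragraph preceding it. What is left to check is that the range space induced on $P$ by the family of triangles in $\reals^2$ has bounded VC-dimension; once this is established, plugging $D=O(1)$ into the Haussler--Welzl bound $O\!\left(\tfrac{D}{\eps}\log\tfrac{D}{\eps}\right)$ yields a strong $\eps$-net of size $O\!\left(\tfrac{1}{\eps}\log\tfrac{1}{\eps}\right)$ that, by definition, pierces every $\eps$-heavy triangle.

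First I would recall that every (closed) triangle $\Delta\subset\reals^2$ is the intersection of three halfplanes $H_1,H_2,H_3$. The family of halfplanes in $\reals^2$ is a classical example of a range space of VC-dimension exactly $3$: any three points in general position can be shattered by halfplanes, but no set of four points can be (this follows from Radon's partition). Next I would invoke the standard closure property of VC-dimension under bounded Boolean combinations: if $\F$ is a family of ranges of VC-dimension $D$, then the family of intersections of $k$ ranges from $\F$ has VC-dimension $O(kD\log(kD))$ (see, e.g., Blumer--Ehrenfeucht--Haussler--Warmuth, or Matou\v{s}ek's textbook). Applying this with $D=3$ and $k=3$ gives an absolute constant bound on the VC-dimension of triangles.

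With the VC-dimension bounded by an absolute constant, the Haussler--Welzl Epsilon-Net Theorem supplies a subset $Y\subseteq P$ of cardinality $O\!\left(\tfrac{1}{\eps}\log\tfrac{1}{\eps}\right)$ such that $Y\cap\Delta\neq\emptyset$ for every triangle $\Delta$ with $|\Delta\cap P|\geq\eps|P|$. Moreover, the same theorem guarantees that a random $r$-sample of $P$, with $r=\Theta\!\left(\tfrac{1}{\eps}\log\tfrac{1}{\eps}\right)$, is such an $\eps$-net with probability at least $1/2$, so an explicit construction is available by derandomization or repeated sampling.

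The only potential subtlety — and the only step that is not completely mechanical — is the closure-under-intersections bound for the VC-dimension. If one prefers to avoid that general lemma, an elementary alternative is to verify directly that no set of $c$ points (for some small explicit $c$, e.g.\ $c=10$) in the plane can be shattered by triangles, using a case analysis based on the convex position of the points; this yields an explicit constant VC-dimension and finishes the proof in the same way.
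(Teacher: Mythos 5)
Your proposal is correct and follows essentially the same route as the paper: the paper simply presents Theorem~\ref{Thm:StrongNet} as an immediate corollary of the Haussler--Welzl Epsilon-Net Theorem, relying on the (standard, unwritten) fact that triangles in $\reals^2$ form a range space of constant VC-dimension. You have merely filled in the routine justification of that fact via the closure of VC-dimension under bounded intersections of halfplanes, which is exactly the expected argument.
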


\paragraph{Cuttings of families of lines.} An important corollary of the Epsilon-Net Theorem is the existence of {\it $\vartheta$-cuttings} of finite families $\L$ of lines in $\reals^2$ -- decompositions of $\reals^2$ into $O^*(\vartheta^{-2})$ interior-disjoint (and possibly unbounded) convex polygonal regions, which are called {\it cells}, so that any cell is delimited by $O(1)$ edges and its interior is crossed by at most $\vartheta |\L|$ lines. A two-stage such construction of worst-case size $O\left(\vartheta^{-2}\right)$ was obtained by Chazelle and Friedman \cite{Cuttings}; it uses the so called Exponential Decay Lemma to control the number of cells that arise in the secondary subdivision.

In what follows, we lay out a simpler cutting which is based on trapezoidal subdivisions of random line arrangements, and loosely corresponds to the first stage in the optimal cutting of Chazelle and Friedman.

\paragraph{Arrangements of lines in $\reals^2$.} 
Our divide-and-conquer approach uses cells in the arrangement of lines that are sampled at random from among the lines spanned by the edges of our restriction graph $(P,\Pi)$. 

To simplify the exposition, we can assume that the points of $P$ are in a general position.
In particular, no three of them are collinear, and no two of them span a vertical line.\footnote{To construct a weak $\eps$-net for a degenerate point set $P$, we perform a routine symbolic perturbation of $P$ into a general position. A weak $\eps$-net with respect to the perturbed set would immediately yield such a net with respect to the original set.}

\paragraph{Definition.} Any finite family $\L$ of $m$ lines in $\reals^2$ induces the {\it arrangement} $\A(\L)$ -- the partition of $\reals^2$ into 2-dimensional {\it cells}, or {\it $2$-faces} -- maximal connected regions of $\reals^2\setminus \left(\bigcup \L\right)$. Each of these cells is a convex polygon whose boundary is composed of {\it edges} -- portions of the lines of $\L$, which connect {\it vertices} -- crossings among the lines of $\L$.
The {\it complexity} of a cell is the total number of edges and vertices that lie on its boundary.

\begin{lemma}\label{Lemma:Triangle}
Let $L_1,L_2$ and $L_3$ be three lines in $\reals^2$, and $\Delta\subset \reals^2\setminus (L_1\cup L_2\cup L_3)$ be a cell in their arrangement. For each $1\leq i\leq 3$, let $L_i^-$ and $L_i^+$ be the two halfplanes of $\reals^2\setminus L_i$ so that $\Delta\subset L_i^-$ (see Figure \ref{Fig:ThreeLines} (left)). Suppose that each line $L_i$ contains a bounary edge $e_i$ of $\Delta$ so that the three edges $e_1,e_2$, and $e_3$, appear in this clockwise order along the boundary of $\Delta$. Then for any convex set $K$ that meets all the three sides $e_1,e_2,e_3$ of $\Delta$, and any point $p\in K\cap L_1^+\cap L_2^-\cap L_3^-$, the segment between $L_2\cap L_3$ and $p$ must cross $L_1$ within the interval $K\cap L_1$.\footnote{In the sequel, we apply the lemma only in the special case where $L_1$ and $L_3$ are vertical lines.}
\end{lemma}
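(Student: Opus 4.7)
The plan is to identify the cell $\Delta$ concretely via its vertices $v_{12}:=L_1\cap L_2$, $v:=L_2\cap L_3$, and (in the bounded triangle case, when $L_1\not\parallel L_3$) $v_{13}:=L_1\cap L_3$, so that the clockwise labelling forces $e_1=\overline{v_{12}v_{13}}$, $e_2=\overline{v_{12}v}$, and $e_3=\overline{vv_{13}}$; in the unbounded case used in the footnoted application, where $L_1\parallel L_3$, the edges $e_1,e_3$ are instead parallel rays and the argument below goes through verbatim with $v_{13}$ ``at infinity''. Let $q:=\overline{vp}\cap L_1$; this crossing is well defined and unique because $v\in \overline{L_1^-}$ while $p\in L_1^+$ sit strictly on opposite sides of $L_1$. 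Since the closed halfplanes $\overline{L_2^-}$ and $\overline{L_3^-}$ are convex and contain both endpoints of $\overline{vp}$, we have $\overline{vp}\subset \overline{L_2^-}\cap \overline{L_3^-}$, whence $q\in L_1\cap \overline{L_2^-}\cap \overline{L_3^-}=e_1$. The lemma thus reduces to showing that $q\in K$.

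Next, I would use the hypothesis that $K$ meets each side of $\Delta$ to pick witnesses $k_2\in K\cap e_2$ and $k_3\in K\cap e_3$. Convexity of $K$ yields $T:=\conv(k_2,k_3,p)\subset K$; and since $k_2,k_3\in \overline{L_1^-}$ while $p\in L_1^+$, the only two sides of $T$ that cross $L_1$ are the two incident to $p$. Hence $T\cap L_1=\overline{q_2q_3}$, where $q_2:=\overline{k_2p}\cap L_1$ and $q_3:=\overline{k_3p}\cap L_1$ both lie in $K\cap L_1$, and it is enough to place $q$ inside the segment $\overline{q_2q_3}$.

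For this last step I would apply the central projection $\phi$ from $p$ onto the line $L_1$. Because $p$ sits strictly in $L_1^+$ while $e_2\cup e_3\subset \overline{L_1^-}$, every line through $p$ and a point of $e_2\cup e_3$ crosses $L_1$ transversally, so $\phi$ is well defined and moves monotonically along $L_1$ as its argument traces the concatenated path $v_{12}\to v\to v_{13}$ (respectively $v_{12}\to v\to\infty$ along $e_3$, in the unbounded case). Since $\phi(v_{12})=v_{12}$ and $\phi(v)=q$, the point $q_2=\phi(k_2)$ lies on the sub-segment of $L_1$ from $v_{12}$ to $q$; and since $\phi(v_{13})=v_{13}$ (or the corresponding image at infinity), $q_3=\phi(k_3)$ lies on the complementary sub-segment (or sub-ray) from $q$ to $v_{13}$. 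Because $q\in e_1$ separates $v_{12}$ from $v_{13}$ along $L_1$, the points $q_2$ and $q_3$ lie on opposite sides of $q$, forcing $q\in \overline{q_2q_3}\subset K$, as required. The one genuine subtlety is the global monotonicity of $\phi$ along the concatenated path $e_2\cup e_3$: this is routine, since $p$ sees the connected convex back-boundary $e_2\cup e_3$ of $\Delta$ from strictly the other side of $L_1$, so the angle of the ray $\overline{pk}$ at $p$ varies monotonically with arc length along $e_2\cup e_3$, and hence so does its (always finite) crossing point with $L_1$.
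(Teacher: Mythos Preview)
Your proof is correct. The paper does not supply a proof of this lemma at all---it is stated as an elementary geometric fact and used directly---so there is no ``paper's approach'' to compare against. Your argument via the triangle $T=\conv(k_2,k_3,p)$ and the central projection $\phi$ from $p$ onto $L_1$ is sound.

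One small comment on the monotonicity step, which you flag as the ``one genuine subtlety.'' Your justification (that $p$ sees the convex arc $e_2\cup e_3$ from across $L_1$) is correct but can be made sharper. The crucial point is that $p$ lies in the open wedge $L_2^-\cap L_3^-$ with apex $v$, so any line through $p$ meets the boundary rays $r_2\supset e_2$ and $r_3\supset e_3$ of that wedge once each, with $p$ strictly between the two hits; since both hits would have to lie in $\overline{L_1^-}$ to fall inside $e_2\cup e_3$, while $p\in L_1^+$ lies between them, convexity of $\overline{L_1^-}$ forbids both from landing in $e_2\cup e_3$. Hence $\phi$ is injective, and monotonicity follows. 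Equivalently (and perhaps more directly), the line $\overline{pv}$ through the apex separates $r_2$ from $r_3$ and therefore $k_2$ from $k_3$; so $\overline{pv}$ meets $\overline{k_2k_3}\subset\overline{L_1^-}$ at some point $w$, and since $p\in L_1^+$ and $w\in\overline{L_1^-}$, the crossing $q=\overline{pv}\cap L_1$ lies on the segment $\overline{pw}\subset T\subset K$. This avoids the projection altogether.
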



\begin{figure}
    \begin{center}
        \input{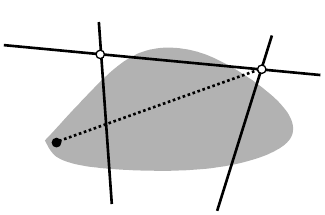_t}\hspace{2cm}\input{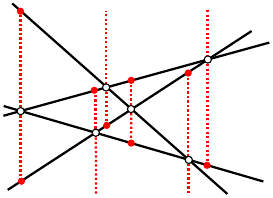_t}
        \caption{\small Left: Lemma \ref{Lemma:Triangle} -- the set $K$ meets the three boundary edges $e_1,e_2,e_3$ of the cell $\Delta=L_1^-\cap L_2^-\cap L_3^-$. The point $p\in K$ lies in $L_1^+\cap L_2^-\cap L_3^-$. The segment between $p$ and $L_2\cap L_3$ crosses $K\cap L$. Right: The trapezoidal decomposition $\Sigma(\L)$.}
        \label{Fig:ThreeLines}
    \end{center}
\end{figure}

\paragraph{The trapezoidal decomposition.}  We further subdivide each cell $\Delta$ of the above arrangement $\A(\L)$ by raising a vertical wall from every boundary vertex of $\Delta$ that is not $x$-extremal (i.e., if the vertical line through the vertex enters the interior of $\Delta$); see Figure \ref{Fig:ThreeLines} (right). As is easy to check, the resulting decomposition $\Sigma(\L)$ is composed of $O\left(m^2\right)$ open trapezoidal cells. The boundary of each cell $\mu$ in $\Sigma(\L)$ consists of at most 4 edges\footnote{Some of the trapezoidal cells can be triangles, or unbounded.}, including at most 2 vertical edges, and the at most 2 other edges that are contained in non-vertical lines of $\L$.

\begin{theorem}\label{Theorem:SampleLines}
Let $\L$ be a family of $m$ lines in $\reals^2$, and $0<r\leq m$ integer. Then, with probability at least $1/2$, an $r$-sample $\R\in {\L\choose r}$ of $\L$ crosses every segment in $\reals^2$ that is intersected by at least $C(m/r)\log r$ lines of $\L$. 
Here $C>0$ is a sufficiently large constant that does not depend on $m$ or $r$.
\end{theorem}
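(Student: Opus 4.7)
The plan is to recast this as a direct application of the Epsilon-Net Theorem of Haussler and Welzl. Define a range space on the ground set $\L$ whose ranges are
$$\L_s := \{\ell\in \L : \ell\cap s\neq \emptyset\},$$
indexed by segments $s\subset \reals^2$. A random sample $\R\in {\L\choose r}$ crosses every segment $s$ with $|\L_s|\geq \eps m$ precisely when $\R$ is a strong $\eps$-net for this range space. Choose $\eps := C(\log r)/r$ with $C$ to be fixed below.

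The key preparatory step is to check that this range space has $O(1)$ VC-dimension. I would use a standard point--line duality: dualize each line $\ell\in \L$ to a point $\ell^*$ in the dual plane. For every fixed point $p\in\reals^2$, the set of lines of $\L$ passing above $p$ corresponds, in the dual, to a halfplane; and a line $\ell$ crosses a segment $pq$ iff the endpoints $p,q$ lie on opposite sides of $\ell$. Hence, under the duality, each $\L_s$ is the symmetric difference of two halfplanes determined by the endpoints of $s$. Halfplanes in $\reals^2$ form a range space of VC-dimension $3$, and a constant number of Boolean operations on ranges from a space of bounded VC-dimension yields again a space of bounded VC-dimension. So the range space $(\L,\{\L_s\}_s)$ has VC-dimension $O(1)$.

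Given this, the Epsilon-Net Theorem furnishes, with probability at least $1/2$, a strong $\eps$-net of $\L$ of cardinality $O(\eps^{-1}\log \eps^{-1})$ realized by a uniformly random sample of that size. Plugging in $\eps = C(\log r)/r$ gives
$$\frac{1}{\eps}\log\frac{1}{\eps} \;=\; \frac{r}{C\log r}\,\log\!\frac{r}{C\log r}\;\leq\; \frac{r}{C}\cdot(1+o(1)),$$
so for $C$ a sufficiently large absolute constant (and $r$ above some fixed threshold, with the remaining small-$r$ cases absorbed into $C$ so that the hypothesis becomes vacuous), the required sample size is at most $r$. Any segment $s$ crossed by at least $C(m/r)\log r$ lines of $\L$ then satisfies $|\L_s|\geq \eps m$, hence it is hit by some line of $\R$.

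The only nontrivial step is the VC-dimension bound; everything else is a mechanical application of Haussler--Welzl with the parameters tuned so that the sample size matches $r$.
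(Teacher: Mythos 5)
Your proof is correct and takes essentially the same route as the paper, which simply cites Chazelle's book and notes that the statement follows from the Epsilon-Net Theorem applied to the range space with ground set $\L$ and ranges $\{\ell\in\L : \ell\cap s\neq\emptyset\}$ indexed by segments $s$. You supply the supporting VC-dimension argument (via duality, ranges become double wedges, i.e.\ symmetric differences of two halfplanes, hence bounded VC-dimension) and the parameter arithmetic showing the Haussler--Welzl sample size is at most $r$ once $\eps=C(\log r)/r$ and $C$ is large; both are standard and correct, so there is nothing to add.
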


The proof of Theorem \ref{Theorem:SampleLines} can be found, e.g., in \cite{ChazelleBook}. It is established by applying the Epsilon Net Theorem to the range space in which every vertex set is a finite family $\L$ of lines in $\reals^2$, and each hyperedge consists of all the lines in $\L$ that are crossed by some segment in $\reals^2$.

An easy consequence of Theorem \ref{Theorem:SampleLines} is that, with probability at least $1/2$, every (open) trapezoidal cell of the induced vertical decomposition $\Sigma(\R)$ is crossed by at most $4C(m/r)\log r$ lines of $\L$. In other words, it serves as a $\left(\frac{4C\log r}{r}\right)$-cutting of $\L$.
Despite a marginally sub-optimal bound on the number of cells (in the terms of $\vartheta:=4C\log r/r$), the simplicity of $\Sigma(\R)$ will prove beneficial for our ad-hoc argument in Section \ref{Subsec:2}.

\paragraph{The zone.} Let $\Sigma$ be a family of open cells in $\reals^2$ (e.g., the above arrangement $\A(\L)$ or its refinement $\Sigma(\L)$). 
The {\it zone} of a convex set $K\subset \reals^2$ in $\Sigma$  is the subset of all the cells in $\Sigma$ that intersect $K$. 

The {\it crossing number} of a convex set $K$ with respect to $\Sigma$ is the cardinality of its zone within $\Sigma$, that is, the number of the cells in $\Sigma$ that are intersected by $K$.

\paragraph{Definition.} For every pair $p,q\in \reals^2$ let $L_{p,q}$ denote the line through $p$ and $q$. We say that the line $L_{p,q}$ is {\it spanned} by the segment $pq$, and that $pq$ is {\it supported} by $L_{p,q}$. 
Given an $n$-point set $P$ with an edge set $\Pi\subset {P\choose 2}$, let 
$$
\L(\Pi):=\{L_{p,q}\mid \{p,q\}\in \Pi\}
$$
 
\noindent be the set of all the lines spanned by the edges of $\Pi$.
If the underlying restriction graph $(P,\Pi)$ is clear from the context, we resort to a simpler notation $\L:=\L(\Pi)$.

\paragraph{Decomposing $\reals^2$ into vertical slabs.}
For any $n$-point set $P$, and any integer $r>0$, we fix a collection $\Y(P,r)$ of $r$ vertical lines so that every vertical slab of the arrangement $\A(\Y(P,r))$ contains between $\lfloor n/(r+1)\rfloor$ to $\lceil n/(r+1)\rceil$ points of $P$, and no line of $\Y(P,r)$ passes through a point of $P$. (The two extremal slabs of $\A(\Y(P,r))$ are halfplanes, and each of them is delimited by a single line of $\Y(P,r)$.) 

In what follows, we use $\Lambda(P,r)$ to denote the above slab decomposition $\A(\Y(P,r))$.

We say that a segment $pq\subset \reals^2$ crosses a slab $\tau\in \Lambda(P,r)$ {\it transversally} if $pq$ intersects the interior of $\tau$, and none of its endpoints $p,q$ lies in $\Delta$; see Figure \ref{Fig:Slabs}.

\begin{figure}[htb]
    \begin{center}
      
        \input{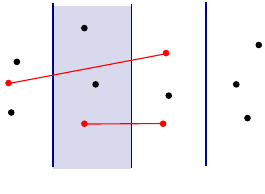_t}
        \caption{\small The vertical lines of $\Y(P,r)$ determine a decomposition $\Lambda(P,r)$ of $\reals^2$ into $r+1$ vertical slabs. (In the depicted scenario, we have $r=3$.) The segment $pq$ crosses the slab $\tau\in \Lambda(P,r)$ transversally, while $p'q'$ does not.}
        \label{Fig:Slabs}
    \end{center}
\end{figure}

We say that a convex set $K$ is {\it $\eps'$-crowded} in $\Lambda(P,r)$ if there a slab in $\Lambda(P,r)$ that contains at least $\eps'n$ points of $P\cap K$; otherwise, we say that $K$ is {\it $\eps'$-spread} in $\Lambda(P,r)$.\footnote{We emphasize that the $\eps'$-crowdedness of a convex set $K$ depends not only on the slabs of $\Lambda(P,r)$ but also its underlying point set $P$.}

The following main property of the decompositions $\Lambda(P,r)$ is used throughout our proof of Theorem \ref{Thm:Main}.

\begin{lemma}\label{Lemma:SampleCell}
Let $P$ be an underlying set of $n$ points in $\reals^2$ and $r>0$ be an integer. 
For each $\eps'\geq 0$ there is a set $Q(P,r,\eps')$ of
$
O\left(r\cdot f\left(\eps' \cdot r\right)\right)
$ 
points that pierce every convex set $K$ that is $\eps'$-crowded in $\Lambda(P,r)$.\footnote{To simplify the presentation, we routinely omit the constant factors within the recursive terms of the form $f(\eps\cdot hr)$ as long as these constants are much larger than $1/h$.  A suitably small choice of the constant $\tilde{\eps}>0$ (and, thereby, $\eps<\tilde{\eps}$) guarantees that $\eps$ indeed increases with each invocation of the recurrence.} 

\end{lemma}

Notice that the recursive term in Lemma \ref{Lemma:SampleCell} is essentially linear in $\eps$ for $\eps'$ close enough to $\eps$; see, e.g., \cite[Section 3]{Chazelle} for a similar recurrence.


\begin{proof}[Proof of Lemma \ref{Lemma:SampleCell}]
Assume with no loss of generality that $r<2n$, for otherwise our net consists of $P$.
Recall that each slab $\tau\in \Lambda(P,r)$ cuts out a subset $P_\tau:=P\cap \tau$ of cardinality $n_\tau:=|P_\tau|\leq \lceil n/(r+1)\rceil =\Theta(n/r)$.

The crucial observation is that each $\eps'$-crowded convex set $K$ must belong to the family $\K(P_\tau,\eps'n/n_\tau)$ for some slab $\tau$ in $\Lambda(P,r)$. 
(In particular, we can further assume that $\eps'=O(1/r)$.)
For each slab $\tau\in \Lambda(P,r)$ we recursively construct the net $Q_\tau$ for the above instance $\K(P_\tau,\eps' n/n_\tau)$.
Using the definition of the function $f(\cdot)$, and that $n_\tau=\Theta(n/r)$, it is easy to check that the total cardinality of the union $Q(P,r,\eps'):=\bigcup_{\tau\in \Lambda(P,r)}Q_\tau$ is indeed $O(r\cdot f(r\cdot \eps'))$.
\end{proof}

The following lemma implies that the recursive instance $\K=(P,\Pi,\eps,\sigma)$ admits a net of size $o(1/\eps^2)$ given that the underlying restriction graph $(P,\Pi)$ is not dense (and that the restriction threshold $\sigma$ is sufficiently close $1$).

\begin{figure}[htb]
    \begin{center}
        \input{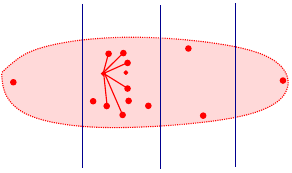_t}\hspace{2cm}\input{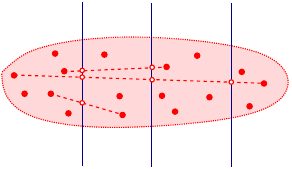_t}
        \caption{\small Proof of Lemma \ref{Lemma:Sparse}. Left: Most edges of $\Pi_K$ do not cross any line of $\Y(P,r)$. Hence, there is a slab that contains $\Omega(\sigma \eps n)$ points of $P_K$. Right: At least half of the edges of $\Pi_K$ cross one or more lines of $\Y(P,r)$, so $K$ must be pierced by one of the nets $Q_L$.}
        \label{Fig:LemmaSparse}
    \end{center}
\end{figure}

\begin{lemma}\label{Lemma:Sparse}
Let $r\geq 1$ be an integer. Then any family $\K\subset \K(P,\Pi,\eps,\sigma)$  admits a point transversal of size

$$
O\left(r\cdot f\left(\eps\cdot \sigma\cdot r\right)+\frac{r^2|\Pi|} {\sigma \eps^2 n^2}\right).
$$
\end{lemma}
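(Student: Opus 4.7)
The plan is to partition $\K := \K(P,\Pi,\eps,\sigma)$ using the vertical slab decomposition $\Lambda(r)$, calibrating a single crowdedness threshold $\eps' := \eps\sigma/8$ so as to match both terms of the claimed bound. Any $K\in\K$ whose witness set $P_K$ meets some slab $\tau\in\Lambda(r)$ in at least $\eps'n$ points is $\eps'$-crowded in the sense of Lemma \ref{Lemma:SampleCell} (since $P_K\subseteq P\cap K$), and is therefore pierced by the net of size $O(r\cdot f(\eps'r))=O(r\cdot f(\eps\sigma r))$ provided by that lemma. This accounts for the first term of the bound.

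For every remaining $K\in\K$ we have $n_i^K := |P_K\cap\tau_i|<\eps'n$ for every slab $\tau_i$. The number of edges of $\Pi_K\subseteq\binom{P_K}{2}$ with both endpoints in a common slab is then at most $\sum_i\binom{n_i^K}{2}\le\frac12(\max_i n_i^K)\cdot|P_K|\le\sigma\eps^2 n^2/16$, whereas $|\Pi_K|\ge\sigma\binom{\lceil\eps n\rceil}{2}\ge\sigma\eps^2 n^2/4$ (assuming $\eps n\ge 2$; otherwise the statement is trivial). Hence at least $\sigma\eps^2 n^2/8$ edges of $\Pi_K$ cross some line of $\Y(r)$, so by pigeonhole some $\ell^*=\ell^*(K)\in\Y(r)$ is crossed by $\Omega(\sigma\eps^2 n^2/r)$ such edges; every corresponding crossing lies in $K\cap\ell^*$, because $K$ is convex and contains both endpoints of each $\Pi_K$-edge.

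To pierce these remaining sets I form the transversal by sorting, along each line $\ell\in\Y(r)$, the intersections of $\ell$ with the edges of $\Pi$ by $y$-coordinate and keeping every $T$-th one, for $T:=c\sigma\eps^2 n^2/r$ and a sufficiently small constant $c>0$. For each remaining $K$ the interval $K\cap\ell^*$ contains at least $T$ of these intersection points, so it contains a sampled point. Since the total number of such intersections is at most $r|\Pi|$ (each edge of $\Pi$ crosses at most $r$ lines of $\Y(r)$), the transversal has size $O(r|\Pi|/T)=O(r^2|\Pi|/(\sigma\eps^2 n^2))$, which gives the second term. The main delicacy I expect is the calibration of $\eps'$: it must be small enough that $f(\eps'r)$ can be absorbed into $f(\eps\sigma r)$, yet large enough that the within-slab portion of $\Pi_K$ is a strict minority of $\Pi_K$; the elementary identity $\sum_i(n_i^K)^2\le(\max_i n_i^K)\cdot|P_K|$ is precisely what makes $\eps'=\Theta(\eps\sigma)$ meet both requirements simultaneously.
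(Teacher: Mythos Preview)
Your proof is correct and follows essentially the same approach as the paper's: both use the slab decomposition $\Lambda(r)$, invoke Lemma~\ref{Lemma:SampleCell} with a threshold $\eps'=\Theta(\sigma\eps)$ to handle crowded sets, and pierce the remaining sets via equispaced samples of $\Pi$-crossings along each line of $\Y(r)$. The only cosmetic difference is in the dichotomy: the paper splits on whether at least half of $\Pi_K$'s edges are confined to a single slab (and then argues this forces crowdedness), whereas you argue directly that non-crowdedness bounds the within-slab edge count via $\sum_i\binom{n_i^K}{2}\le\tfrac12(\max_i n_i^K)|P_K|$; these are contrapositive reformulations of the same idea.
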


\begin{proof}
Assume with no loss of generality that $|P|\geq 2r$, for otherwise the claim follows trivially.
We consider the slab decomposition $\Lambda(P,r)$ and apply Lemma \ref{Lemma:SampleCell} with $\eps'=\sigma \eps/4$ to obtain a net $Q(P,r,\eps')$ of size 
$O\left(r\cdot f\left(\eps\cdot \sigma\cdot r\right)\right)$
that pierces every set $K\in \K$ that is $\eps'$-crowded in $\Lambda(P,r)$. 

In addition, for each vertical line $L\in \Y(P,r)$ we construct an auxiliary net $Q_L$ by choosing every $\lceil\sigma{\lceil \eps n\rceil \choose 2}/(2r)\rceil$-th crossing point of $L$ with the edges of $\Pi$. Notice that 

$$
\sum_{L\in \Y(P,r)}|Q_L|=O\left(\frac{r^2|\Pi|}{\sigma\eps^2n^2}\right)
$$

It suffices to check that every convex set $K\in \K$ is stabbed by at least one of the above nets. To this end, we distinguish between two cases.
\begin{enumerate}

\item If at least half of the segments of $\Pi_K={P_K\choose 2}\cap \Pi$ do not cross any line of $\Y(P,r)$, we find a point $p\in P_K$ so that at least $2\sigma{\lceil\eps n\rceil\choose 2}/\lceil\eps n\rceil\geq \sigma\eps n/4$ of its neighbors in the graph $(P_K,\Pi_K)$ lie in the same slab $\tau\in \Lambda(P,r)$ that contains $p$. Hence, $K$ is $\eps'$-crowded in $\Lambda(P,r)$ and, therefore, pierced by a point of $Q'$. See Figure \ref{Fig:LemmaSparse} (left).

\item At least half of the segments of $\Pi_K$ cross a line of $\Y(P,r)$. Since there are at least $(\sigma/2){\lceil\eps n\rceil\choose 2}$ intersection points between the edges of $\Pi_K$ and the lines of $\Y(P,r)$, there must be a line $L\in \Y(P,r)$ which contains at least $\sigma{\lceil\eps n\rceil\choose 2}/(2r)$ of these intersections. Hence, $K$ is hit by the corresponding net $Q_L$. See Figure \ref{Fig:LemmaSparse} (right). \end{enumerate}
\end{proof}

As mentioned in Section \ref{Subsec:RecursiveFramewk}, throughout our analysis $\sigma$ remains bounded from below by a certain positive constant, and we apply Lemma \ref{Lemma:Sparse} with $r$ that is a very small (albeit, fixed) constant power $1/\eps$. (In particular, $r$ is much larger than $1/\sigma$.)
Notice that this yields the following bound 
\begin{equation}\label{Eq:RecurrenceDensity}
f(\eps,\lambda,\sigma)=O\left(r\cdot f(\eps\cdot \sigma\cdot r)+\frac{r^2\lambda}{\sigma \eps^2}\right)
\end{equation}
in which the recursive term on the right side is essentially linear in $1/\eps$, and the constants of proportionality that are hidden by the $O(\cdot)$-notation do not depend on $\eps,\sigma$, and $\lambda$. Moreover, the non-recursive term is $o(1/\eps^2)$ provided that the density $\lambda$ is substantially smaller than $1$, and it is close to $1/\eps$ if $\lambda\leq \eps$.
A standard inductive approach to solving recurrences of this kind is presented, e.g., in \cite{Envelopes3D} and \cite[Section 7.3.2]{SA}.


\section{Proof of Theorem \ref{Thm:Main}}\label{Sec:Main}

To establish Theorem \ref{Thm:Main}, we fix $\gamma>0$ and show that $f(\eps)=O\left(1/\eps^{3/2+\gamma}\right)$. To this end, we first derive a recurrence formula of the general form (\ref{Eq:GenRecurrence}) for the quantity $f(\eps,\lambda,\sigma)$, where $\lambda>\eps$. As mentioned in Section \ref{Subsec:RecursiveFramewk}, the final recurrence for $f(\eps)$, of the form (\ref{Eq:GenRecurrenceSimple}), will follow by iterating this recurrence for $f(\eps,\lambda,\sigma)$ with $\sigma=\Theta(1)$, and then plugging in the bound of Lemma \ref{Lemma:Sparse} for $\lambda\leq \eps$. \footnote{Though we have $\sigma=\Theta(1)$ in all the subsequent applications of our recurrence for $f(\eps,\lambda,\sigma)$, starting with $f(\eps)=f(\eps,1,1)$, the dependence on the restriction threshold $\sigma$ will be spelled out throughout our analysis.}


To obtain the desired recurrence for $f(\eps,\lambda,\sigma)$ for $\lambda>\eps$, we bound the piercing number of the family $\K:=\K(P,\Pi,\eps,\sigma)$ for an arbitrary choice of the finite point set $P\subset \reals^2$ in general position, the parameters $0\leq \eps,\sigma\leq 1$, and the edge set $\Pi\subseteq {P\choose 2}$ that satisfies 

\begin{equation}\label{Eq:ManyEdges}
|\Pi|/{|P|\choose 2}\leq \lambda. 
\end{equation}

In the course of our construction, we introduce three auxiliary parameters $r_0,s_0$, and $r_1$. The first two parameters are set to be very small degrees of $1/\eps$ that depend on $\gamma$.
To this end, we set $\eta:=\gamma/100$ and $r_0:=\left\lceil 1/\eps^{\eta^2}\right\rceil$, and choose $s_0=\Theta\left(1/\eps^\eta\right)$ with the property that $s_0+1$ is the smallest multiple of $r_0+1$ that is larger than $r_0^{1/\eta}$. In addition, we will set $r_1:=\left\lceil \sqrt{1/\eps}\right\rceil$.

\smallskip
In what follows, we can assume that $\eps$ is bounded from above by a sufficiently small absolute constant $\tilde{\eps}>0$ which, in particular,  guarantees that the following inequality holds\footnote{In the sequel $\log x$ denotes the binary logarithm $\log_2 x$.}
\begin{equation}\label{Eq:SmallConstantEps}
10^5\log \frac{1}{{\eps}}<\frac{1}{{\eps}^\eta}.
\end{equation}

\noindent Otherwise, if $\eps\geq \tilde{\eps}$, the previous $O\left(1/{\tilde{\eps}}^2\right)=O(1)$ bound applies \cite{AlonSelections}.
We can also assume that the number of points $n=|P|$ is larger than a certain threshold $n_0(\eps)$, namely,

\begin{equation}\label{Eq:ManyPoints} 
n>n_0(\eps):=\left\lceil \frac{3200r_0r_1\log r_1}{\sigma\eps}\right\rceil.
\end{equation}
\noindent Otherwise, if $|P|\leq n_0(\eps)$, our transversal consists of $|P|\leq n_0(\eps)=o\left(1/\eps^{3/2+\gamma}\right)$ points. 

\medskip
For $\eps<\min\{\lambda,\tilde{\eps}\}$, which satisfy (\ref{Eq:SmallConstantEps}), and the sets $P$ and $\Pi\subseteq {P\choose 2}$ that satisfy (\ref{Eq:ManyEdges}) and $|P|>n_0(\eps)$, the piercing number of $\K$ will be bounded in the terms of the quantities $f(\eps,\lambda/r_0,\sigma/2)$ and $f(\eps')$, for $\eps'>\eps$. 
To this end, we gradually construct a net $Q$ which pierces every $\eps$-heavy set $K\in \K$. Our construction begins with an empty net $Q=\emptyset$ and proceeds through several stages.
At each stage we add a small number of points to the net $Q$ and immediately eliminate the already pierced convex sets from the family $\K$.
The surviving sets $K\in \K$, which have yet not been pierced by $Q$, satisfy additional restrictions which facilitate their treatment at the subsequent stages.

Our main decomposition $\Sigma=\Sigma(r_1)$ of $\reals^2$ in Section \ref{Subsec:1} is based on cells in the arrangement of an $r_1$-sample $\R_1$ of $\L=\L(\Pi)$, for a fairly large value  $r_1=\left\lceil \sqrt{1/\eps}\right\rceil$.
Informally, the lines of $\R_1$ are sampled from $\L$ so as to control the crossing number (i.e., size of the respective zone in $\Sigma(r_1)$) of an average edge $pq$ of $\Pi$. This bound readily extends to the narrow convex sets $K$ whose zones are traced by such edges $pq$.
Recall that our main argument (which was sketched in Section \ref{Subsec:ProofOutline}) requires that
the points $P_K$ of each set $K\in \K$ are in a ``sufficiently convex" position, and are substantially spread within the zone of $K$ in $\A(\R_1)$.
To this end, we employ an auxiliary slab decomposition $\Lambda(P,r_0)$ of Lemma \ref{Lemma:SampleCell} in combination with The Epsilon Net Theorem \ref{Thm:StrongNet}.

\paragraph{The roadmap.} The rest of this section is organized as follows.

\medskip
In Section \ref{Subsec:0} we construct an auxiliary slab decomposition $\Lambda(P,r_0)$, and use Lemma \ref{Lemma:SampleCell} to guarantee
that the points of our convex sets $K$ are sufficiently spread among the slabs of $\Lambda(P,r_0)$. 
This is achieved at expense of adding to $Q$ a small-size auxiliary net $Q_0$ which is provided by Lemma \ref{Lemma:SampleCell}.

In Section \ref{Subsec:1} we use the larger sample $\R_1$ of $r_1$ lines from $\L=\L(\Pi)$ to define the finer main decomposition $\Sigma(r_1)$ of $\reals^2$. 
As mentioned in Section \ref{Subsec:ProofOutline}, $\Sigma(r_1)$ is obtained by vertically subdividing the cells of $\A(\R_1)$ into trapezoidal sub-cells.
By the properties of $\Sigma(r_1)$ as a $\Theta(\log r_1/r_1)$-cutting for $\L$ \cite{Cuttings}, an average line of $\L$ crosses only $O(r_1\log r_1)$ cells of $\Sigma(r_1)$. We further ``normalize" $\Pi$ by omitting a relatively small fraction of its edges whose supporting lines in $\L$ cross too many of the cells of $\Sigma(r_1)$. We then remove from $\K$ every convex set that is not $(\eps,\sigma/2)$-restricted to the surviving graph $(P,\Pi)$.
To that end, we add to $Q$ another auxiliary net $Q_1$ which is obtained by solving a simpler recursive instance $\K(P,\Pi',\eps,\sigma/2)$, with a much sparser restriction graph $\Pi'$.

In Section \ref{Subsec:2} make sure that every remaining set $K\in \K$ is narrow in the decomposition $\Sigma(r_1)$ (in the sense described in Section \ref{Subsec:ProofOutline}) and, therefore, it crosses roughly $O(r_1\log r_1)$ of the decomposition cells.\footnote{More precisely, we ``clip" every set $K$ to a carefully chosen slab $\tau\in\Lambda(P,r_0)$, and apply a similar restriction to $\Sigma(r_1)$.} The leftover convex sets, that are not sufficiently narrow  in $\Sigma(r_1)$, are pierced by an auxiliary net $Q_2$ whose size is close to $r_1/\eps$.

In Section \ref{Subsec:3} we use the properties of $\Sigma(r_1)$ to construct the final net $Q_3$ which pierces all the remaining sets $K\in \K$ (missed by the auxiliary nets $Q_i$ of the previous stages $0\leq i\leq 2$). This is achieved through a skillful combination of the two paradigms sketched in Section \ref{Subsec:ProofOutline}. Thus, the eventual net $Q$ for our family $\K$ is given by the union $\bigcup_{i=0}^3Q_i$. 

In Section \ref{Subsec:WrapUp} combine the 
bounds of the preceding Sections \ref{Subsec:0} -- \ref{Subsec:3} to bound the cardinality of the complete net $Q$, and then derive the final recurrences for the quantities $f(\eps,\lambda,\sigma)$ and $f(\eps)$. 

\subsection{Stage 0: The slab decomposition $\Lambda(P,r_0)$} \label{Subsec:0}

At this stage we construct an auxiliary, almost constant-size slab decomposition $\Lambda(P,r_0)$, and use Lemma \ref{Lemma:SampleCell} to guarantee for each convex set $K\in \K$ that the points of $P_K$ are sufficiently spread among the slabs of $\Lambda(P,r_0)$.  This is achieved at the expense of adding to $Q$ a certain auxiliary net $Q_0$, and immediately removing from $\K$ all the sets already pierced by $Q_0$. 

To this end, we select a set $\Y(P,r_0)$ of vertical lines, as detailed in Section \ref{Subsec:Essentials}; each slab $\tau$ of the resulting arrangement $\Lambda(P,r_0)$ contains between $\lfloor n/(r_0+1)\rfloor$ and $\lceil n/(r_0+1)\rceil$ points of $P$. (As previously noted, the integer parameter $r_0=\Theta\left(1/\eps^{\eta^2}\right)$ is set to a very small degree of $1/\eps$.)  

\medskip
\noindent{\bf The net $Q_0$.} By Lemma \ref{Lemma:SampleCell}, we can pierce (and subsequently remove from $\K$) every $\left(\sigma\eps/100\right)$-crowded convex set $K$ using an auxiliary net 

\begin{equation}\label{Eq:Q_0Definition}
Q_0:=Q\left(P,r_0,\sigma\eps/100\right)
\end{equation} 

\noindent whose cardinality satisfies

\begin{equation}\label{Eq:Stage0}
|Q_0|=O\left(r_0\cdot f\left(\eps\cdot \sigma \cdot r_0\right)\right).
\end{equation}

In what follows, we can assume that every remaining set $K\in \K$ is $(\sigma\eps/100)$-spread in the slab decomposition $\Lambda(P,r_0)$.


\subsection{Stage 1: The main decomposition of $\reals^2$} \label{Subsec:1}
At this stage we construct the main decomposition $\Sigma(r_1)$ of $\reals^2$ into $O(r_1^2)$ cells. Since $\Sigma(r_1)$ is a refinement of the auxiliary slab decomposition $\Lambda(P,r_0)$, we can use the properties of $\Lambda(P,r_0)$ to show that the points of $P_K$ are sufficiently spread in the finer decomposition $\Sigma(r_1)$. In particular, a so called middle slab $\tau\in \Lambda(P,r_0)$ can be obtained for every remaining convex set $K\in \K$, which will play a quintessential role in the analysis of Sections \ref{Subsec:2} and \ref{Subsec:3}.

\smallskip
\paragraph{The decomposition $\Sigma(r_1)$.} We sample a subset $\R_1$ of $r_1=\left\lceil \sqrt{1/\eps}\right\rceil$ lines from $\L=\L(\Pi)$.
We can assume with no loss of generality that no line of $\Y(P,r_0)$ passes through a vertex of $\A(\R_1)$.\footnote{If $m<r_1$ then we obtain the desired decomposition by choosing $\R_1=\L$. Note that the lines of $\R_1$ are not necessarily in a general position: many of them can pass through the same point of $P$. Nevertheless, there exist at most $2r_1$ such points in $P$ that lie on one or more lines of $\R_1$.} 
To simplify the exposition, we add the vertical lines of $\Y(P,r_0)$ to $\R_1$, so the arrangement $\A(\R_1)$ is a refinement of $\Lambda(P,r_0)$.

We then construct the trapezoidal decomposition $\Sigma(\R_1)$ of $\A(\R_1)$ which was described in Section \ref{Subsec:Essentials}; see Figure \ref{Fig:ZoneEdge}. We further subdivide each cell $\hat{\mu}\in \Sigma(\R_1)$ (where necessary) into sub-trapezoids $\mu$ so that $|P\cap \mu|\leq n/r_1^2$; this can be achieved using $O\left(\lfloor r_1^2|P\cap \hat{\mu}|/n\rfloor\right)$ additional vertical walls. Furthermore, we can assume that none of these walls coincides with a point of $P$.

\begin{figure}[htb]
    \begin{center}
       \input{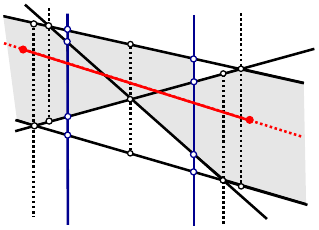_t}
        \caption{\small Our vertical decomposition $\Sigma(r_1)$ which incorporates the lines of $\Y(P,r_0)$. The zone of a line $L_{p,q}\in \L$ is shaded. Following the removal of $\Pi_{>t}$, every remaining edge $pq\in \Pi$ crosses at most $t=r_0r_1\log r_1$ cells of $\Sigma(r_1)$.}
        \label{Fig:ZoneEdge}
    \end{center}
\end{figure}

A standard calculation (see, e.g., \cite{Chan}) shows that the resulting finer partition $\Sigma(r_1)$ encompasses a total of $O(r_1^2)$ trapezoids. Since $\Sigma(r_1)$ is a refinement of $\Sigma(\R_1)$, the relative interior of each of its cells is still crossed by $O\left((m\log r_1)/r_1\right)$ lines of $\L$, where $m$ denotes the cardinality of $\Pi$ and $\L=\L(\Pi)$. As explained in Section \ref{Subsec:Essentials}, $\Sigma(\R)$ along with its refinement $\Sigma(r_1)$ serve as simple $O\left(\frac{\log r_1}{r_1}\right)$-cuttings with respect to $\L$.

\medskip
\noindent{\bf Refining the restriction graph $\Pi$.} 
Since every trapezoidal cell of $\Sigma(r_1)$ is crossed by $O((m\log r_1)/r_1)$ lines of $\L$, the zone of an ``average" line in $\L=\L(\Pi)$ consists of $O(r_1\log r_1)$ cells of $\Sigma(r_1)$.

To eliminate the edges of $\Pi$ whose supporting lines in $\L$ deviate ``too far" from the average behaviour with respect to our decomposition $\Sigma(r_1)$, we set $t:=r_0r_1\log r_1$ and use $\L_{>t}\subset \L$ to denote the subset of all the lines in $\L$ that cross more than $t$ (open) cells of $\Sigma(r_1)$.

\begin{proposition}\label{Prop:Sparser} We have that
$$
|\L_{>t}|=O\left(\frac{m}{r_0}\right).
$$
\end{proposition}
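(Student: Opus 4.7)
The plan is a straightforward double-counting argument on incidences between the lines of $\L$ and the cells of $\Sigma(r_1)$, combined with a Markov-type inequality. The key facts I would rely on are (i) the bound, already noted in this subsection, that every trapezoidal cell $\mu\in \Sigma(r_1)$ is crossed by $O\!\left((m/r_1)\log r_1\right)$ lines of $\L$ (this is inherited from $\Sigma(\R_1)$ via Theorem \ref{Theorem:SampleLines}, noting that the additional vertical subdivisions only restrict the set of lines crossing each sub-cell), and (ii) the total number of cells in $\Sigma(r_1)$ is $O(r_1^2)$.

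First, I would let
$$
I := \#\bigl\{(\ell,\mu) : \ell\in \L,\ \mu\in \Sigma(r_1),\ \ell\cap \mu\neq \emptyset\bigr\}
$$
be the number of line-cell incidences, and double count it. Summing over cells gives
$$
I \;=\; \sum_{\mu\in \Sigma(r_1)} \#\{\ell\in \L : \ell\cap \mu\neq \emptyset\} \;\leq\; O(r_1^2)\cdot O\!\left(\tfrac{m\log r_1}{r_1}\right) \;=\; O(m\,r_1\log r_1).
$$

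On the other hand, summing over lines gives
$$
I \;=\; \sum_{\ell\in\L}\#\{\mu\in \Sigma(r_1) : \ell\cap \mu\neq \emptyset\} \;\geq\; |\L(t)|\cdot t\,r_1\log r_1,
$$
by the very definition of $\L(t)$. Combining the two estimates yields $|\L(t)|\leq O(m/t)$, as required.

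The argument is almost mechanical; the only point that needs a brief sanity check is the upper bound on the per-cell line count in $\Sigma(r_1)$ as opposed to the coarser $\Sigma(\R_1)$. Since $\Sigma(r_1)$ is obtained from $\Sigma(\R_1)$ solely by inserting vertical walls, any line crossing a sub-cell $\mu\subset\hat\mu$ necessarily crosses $\hat\mu$ itself, so the cutting bound of Theorem \ref{Theorem:SampleLines} for $\Sigma(\R_1)$ transfers without loss to $\Sigma(r_1)$. I do not anticipate an obstacle beyond verifying this inheritance, and the constant of proportionality depends only on the constant $C$ of Theorem \ref{Theorem:SampleLines}.
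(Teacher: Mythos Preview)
Your proof is correct and follows essentially the same approach as the paper: a double-count of line--cell incidences combined with Markov's inequality, using that each of the $O(r_1^2)$ cells of $\Sigma(r_1)$ is crossed by $O((m/r_1)\log r_1)$ lines of $\L$. The paper's own proof is identical in substance, and your additional remark about the cutting bound being inherited by the refined decomposition $\Sigma(r_1)$ is exactly the justification the paper records just before stating the proposition.
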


For the sake of completeness, we spell out the fairly standard proof of Proposition \ref{Prop:Sparser}.

\begin{proof}
Since any trapezoidal cell $\mu$ in $\Sigma(r_1)$ is crossed by $O\left((m\log r_1)/r_1\right)$ lines of $\L$, the bipartite graph of pairwise intersections between the lines of $\L$ and the cells of $\Sigma(r_1)$ contains
$$
O\left(r_1^2 \cdot \frac{m \log r_1}{r_1}\right)=O(r_1 m\log r_1)
$$
edges. Since every line of $\L_{>t}$ contributes at least $t=r_0r_1\log r_1$ intersections, the claim now follows by applying the pigeonhole principle (or Markov's inequality).
\end{proof}

\medskip
\noindent{\bf The net $Q_1$.} Let $\Pi_{>t}$ be the set of edges that span the lines of $\L_{>t}$. 
Consider the recursive instance 
$$
\K_{>t}:=\K(P,\Pi_{>t},\eps,\sigma/2).
$$

Using the bound of Proposition \ref{Prop:Sparser} on $|\Pi_{>t}|=|\L_{>t}|$, we can pierce the sets of $\K_{>t}$ by an auxiliary net $Q_1$ of size\footnote{For the sake of brevity, in our asymptotic analysis we omit the multiplicative constants within the arguments $\lambda'$ of the recursive terms $f(\eps,\lambda',\sigma')$.}

\begin{equation}\label{Eq:Stage1}
f\left(\eps,\frac{|\Pi_{>t}|}{{n\choose 2}},\frac{\sigma}{2}\right)
\leq f\left(\eps,\frac{m}{r_0{n\choose 2}},\frac{\sigma}{2}\right)\leq f\left(\eps,\frac{\lambda}{r_0},\frac{\sigma}{2}\right).
\end{equation}

\noindent We immediately add the points of $Q_1$ to our net $Q$, and remove the sets of $\K_{>t}$ from our family $\K$. Note that choosing $r_0$ to be a very small (albeit, constant) positive power of $1/\eps$ guarantees that the recurrence (\ref{Eq:Stage1}) in the maximum density $\lambda\geq m/{n\choose 2}$ is invoked only a fixed number of times before $\lambda$ falls below $\eps$; thus, $\sigma$ remains bounded from below by a sufficiently small constant.

\medskip

Notice that every remaining set $K\in \K$ belongs to the family 
$\K\left(P,\Pi\setminus\Pi_{>t},\eps,\sigma/2\right)$. We thus remove the edges of $\Pi_{>t}$ from $\Pi$ and, accordingly, remove the edges of $\Pi_{>t}$ from the subsets $\Pi_K:=\Pi\cap {P_K\choose 2}$ induced by all the convex sets $K\in \K$. In doing so, we stick with the same remaining family $\K$ even if some of its sets $K\in \K$ are only $\left(\eps,\sigma/2\right)$-restricted with respect to the refined graph $(P,\Pi)$.

\medskip
\paragraph{Choosing a middle slab in $\Lambda(P,r_0)$.} Denote 
\begin{equation}\label{Eq:DefEps0}
\eps_0:=\frac{\sigma\eps}{100r_0}.
\end{equation}

\noindent {\bf Definition.} Let $K\in \K$ be a convex set. We say that a slab $\tau$ in the auxiliary decomposition $\Lambda(P,r_0)$ is a {\it middle slab} with respect to $K$ if the following conditions are satisfied (see Figure \ref{Fig:MiddleSlab}):

\bigskip
(M1) $\eps_0 n\leq |P_K\cap \tau|\leq \sigma \eps n/100$, and

\bigskip
(M2) At least $\sigma{\lceil\eps n\rceil\choose 2}/(8r_0)=\Omega\left(\sigma\eps^2n^2/r_0\right)$ of the edges of $\Pi_K$ cross $\tau$ transversally. \footnote{See Section \ref{Subsec:Essentials} for the definition. Here $\Pi_K$ denotes the induced sub-graph $\Pi\cap {P_K\choose 2}$ {\it after} removing the edges of $\Pi_{>t}$.} (In particular, there is {\it at least one} such edge $pq\in \Pi_K$ that crosses $\tau$ transversally.)

\begin{figure}[htb]
    \begin{center}
        \input{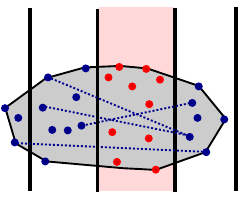_t}
        \caption{\small The slab $\tau\in \Lambda(P,r_0)$ is a middle slab for $K$. The depicted edge $pq\in \Pi_K$ crosses $\tau$ transversally.}
        \label{Fig:MiddleSlab}
    \end{center}
\end{figure}

 Informally, the second property (M2) yields $\Omega\left(\eps^2 n^2/r_0\right)$ edges in $\Pi_K$ that can be used to trace the zone of $K$ within the restriction of $\Sigma(r_1)$ to $\tau$. In what follows, this facilitates the study of generalized incidences between such convex sets $K$ and the points of $P_\tau=P\cap \tau$.

\begin{proposition}\label{Prop:MiddleCell} For each remaining convex set $K\in \K$, there is at least one middle slab in $\Lambda(P,r_0)$.
\end{proposition}

\begin{proof} 
Fix a set $K\in\K$. 
By definition, any convex set $K\in \K$ with at least $\sigma\eps n/100$ points in a single slab $\tau\in \Lambda(P,r_0)$ is $\left(\sigma\eps/100\right)$-crowded and, therefore, already pierced by the net $Q_0=Q\left(P,r_0,\sigma\eps/100\right)$ of Section \ref{Subsec:0}. Hence, the second inequality in (M1) holds for {\it any} slab $\tau\in \Lambda(P,r_0)$.

Let $\Lambda_K$ be the set of all the slabs $\tau$ in $\Lambda(P,r_0)$ that are intersected by $K$ and satisfy $|P_K\cap \tau|\geq \eps_0n=\sigma\eps n/(100r_0)$. Notice that every slab of $\Lambda_K$ satisfies condition (M1), and the points in the slabs of $\Lambda(P,r_0)\setminus \Lambda_{K}$ are involved in a total of at most  
$$
\frac{\sigma \eps n}{100r_0}\cdot (r_0+1)\cdot \lceil \eps n\rceil \leq \frac{\sigma}{4}{\lceil\eps n\rceil\choose 2}
$$ 
adjacencies with the edges of $\Pi_K$. Using that $K$ is $(\eps,\sigma/2)$-restricted with respect to the refined graph $(P,\Pi)$, so that $|\Pi_K|=|{P_K\choose 2}\cap \Pi|\geq \frac{\sigma}{2} {\lceil\eps n\rceil\choose 2}$, we obtain a subset $\Pi'_K\subseteq \Pi_K$ of at least $\frac{\sigma}{4}{\lceil\eps n\rceil\choose 2}$ edges so that both of their endpoints lie in the slabs of $\Lambda_K$.


If no cell in $\Lambda_K$ satisfies condition (M2), we obtain at least
$|\Pi'_K|/2>\frac{\sigma}{8}{\lceil\eps n\rceil\choose 2}$ edges of $\Pi'_K$ so that none of them has a transversal crossing with a slab of $\Lambda_K$.
 Thus, by the pigeonhole principle, there must be a slab $\tau\in \Lambda_K$ and a point $p\in P_K\cap \tau$ so that at least $\sigma {\lceil\eps n\rceil\choose 2}/(4\lceil\eps n\rceil)$ of its neighbors in the graph $\Pi'_K$ lie either in $\tau$ or in one of its (at most) two neighboring slabs within $\Lambda_K$. (Notice that these slabs need not be consecutive in $\Lambda(P,r_0)$ or $\Lambda_K$.)
Since one of these three slabs of $\Lambda_K$ must then contain at least $\sigma {\lceil\eps n\rceil\choose 2}/(12\lceil\eps n\rceil)$ neighbors of $p$ in $\Pi'_K$ (and we have $n>n_0(\eps)$, as defined in (\ref{Eq:ManyPoints})), the convex set $K$ is $(\sigma\eps/100)$-crowded in $\Lambda(P,r_0)$. Hence, $K$ must have been pierced by the net $Q_0=Q\left(P,r_0,\sigma\eps/100\right)$ of Stage 0, and already removed from $\K$. This contradiction establishes the claim.
\end{proof}

To recap, for every remaining convex set $K\in \K$ (which is missed by the combination $Q_0\cup Q_1$) there is at least one middle slab $\tau\in \Lambda(P,r_0)$. 
Furthermore, each of the edges $pq\in \Pi_K$ (at least $(\sigma/2){\lceil\eps n\rceil\choose 2}$ in number) that cross $\tau$ transversally by property (M2), meets the interiors of at most $t=r_0r_1\log r_1$ cells of the decomposition $\Sigma(r_1)$.


In the following Section \ref{Subsec:2} we use these two properties to guarantee that every set $K\in \K$ intersects at most $t=r_0r_1\log r_1$ cells of $\Sigma(r_1)$ within some middle slab $\tau$ of $K$. As before, this is achieved at expense of adding an additional small-size auxiliary net to $Q$.

\subsection{Stage 2: Controlling the crossing number in $\Sigma(r_1)$}\label{Subsec:2}
\medskip
\noindent{\bf Definition.}  To simplify our exposition, for each $K\in \K$ we fix a middle slab $\tau\in \Lambda(P,r)$ with an edge $pq\in \Pi_K$ that crosses $\tau$ transversally. (By condition (M2), such an edge $pq$ exists and can be chosen in $\Omega(\sigma\eps^2n^2/r_0)$ possible ways.) In what follows, we refer to $\tau$ as the {\it principal middle slab}, and to $pq$ as the {\it principal edge}, of $K$.

\medskip
For each slab $\tau\in \Lambda(P,r_0)$ we consider the subfamily $\K_\tau\subset \K$ of all the convex sets $K\in \K$ so that $\tau$ is their principal middle slab. 
By Proposition \ref{Prop:MiddleCell}, we have 
$\K=\biguplus_{\tau\in \Lambda(P,r_0)}\K_\tau$. 

\medskip
In Section \ref{Subsec:3}, we will use the decomposition $\Sigma(r_1)$ to construct a small-size net $Q_\tau$ for each sub-family $\K_\tau$. 
To this end, for every slab $\tau\in \Lambda(P,r_0)$ we consider the restriction 
$$
\Sigma_\tau:=\{\mu\in \Sigma(r_1)\mid \mu\subset \tau\}.
$$

\begin{figure}[htb]
    \begin{center}
      \input{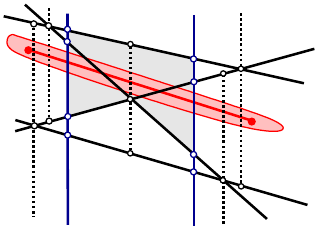_t}
        \caption{\small The set $K$ is narrow in $\Sigma_\tau$ because $K\cap \tau$ is contained in the zone of the principal edge $pq\in \Pi_K$ which crosses $\tau$ transversally. (The cells of the zone of $K\cap \tau$ within $\Sigma_\tau$ are shaded.)}
        \label{Fig:NarrowRestrict}
    \end{center}
\end{figure}

\paragraph{Definition.} Let $K\in \K$ be a convex set, and let $\tau$ be the principal middle slab of $K$.
We say that $K$ is {\it narrow} (in $\Sigma_\tau$) if the restriction $K\cap \tau$ is contained in the zone of its principal edge $pq\in \Pi_K$ of $K$ within $\Sigma_\tau$. (By definition, the cells of this zone lie in the zone of $pq$ within the arrangement $\A(\R_1)$.) See Figure \ref{Fig:NarrowRestrict}.


\medskip
Informally, the narrowness of $K\in \K_\tau$ means that its behaviour is ``line-like" in $\Sigma_\tau$, so the zone of $K$ in $\Sigma_\tau$ can be completely ``read off" from its principal edge $pq\in \Pi_K$.


\begin{proposition}\label{Prop:Zone}
Let $\tau$ be a slab of $\Lambda(P,r_0)$, and let $K\in\K_{\tau}$ be a narrow convex set. Then $K$ intersects at most $r_0r_1\log r_1$ cells of $\Sigma_{\tau}$.
\end{proposition}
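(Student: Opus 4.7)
The plan is to locate a single edge $pq$ of the (current, refined) subgraph $\Pi_K$ whose segment already determines all cells of $\Sigma_\tau$ visited by $K$, and then to invoke the crossing-number bound for the supporting line $L_{p,q}$ that was enforced by Stage~1. Concretely, since $K\in \K_\tau$, property (M2) guarantees that the current $\Pi_K$ contains at least one edge $pq$ that crosses $\tau$ transversally (in fact $\Omega(\sigma\eps^2 n^2/r_0)$ such edges, but a single one suffices here). Fix any such edge $pq$. Because both endpoints lie in $P_K\subseteq K$ and $K$ is convex, the entire segment $pq$ lies in $K$ and crosses $\tau$ transversally.

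Now the $\Sigma_\tau$-narrowness of $K$ asserts exactly that $K\cap\tau$ is contained in the zone of the segment $pq$ within $\Sigma_\tau$; equivalently, every cell $\mu\in\Sigma_\tau$ meeting $K$ is already met by the segment $pq$ itself, and in particular by the supporting line $L_{p,q}\in\L$. Since $\Sigma_\tau\subseteq\Sigma(r_1)$, the cells of $\Sigma_\tau$ intersected by $K$ form a subset of the zone of $L_{p,q}$ in $\Sigma(r_1)$. But the edges of $\Pi_t$ were removed from $\Pi$ at Stage~1, so $L_{p,q}\in\L\setminus\L(t)$; by the very definition of $\L(t)$, this line crosses at most $tr_1\log r_1$ cells of $\Sigma(r_1)$, which yields the claimed bound. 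There is no genuine obstacle in this argument — it is a chain of definitions combined with the crossing-number bound inherited from Stage~1; the only point requiring a moment's care is to note that Proposition~\ref{Prop:MiddleCell}, stated for $\K(P,\Pi,\eps,\sigma/2)$ with the \emph{refined} graph $\Pi$, still supplies the transversally crossing edge of $\Pi_K$ needed to launch the argument.
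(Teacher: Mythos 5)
Your proof is correct and follows essentially the same reasoning as the paper's: pick one transversally crossing edge $pq\in\Pi_K$ (guaranteed by (M2)), use narrowness to identify the zone of $K\cap\tau$ in $\Sigma_\tau$ with the zone of $pq$, and then bound that zone by $tr_1\log r_1$ because $L_{p,q}\notin\L(t)$ after the Stage~1 pruning. Your explicit observation that a single such edge suffices, and that $pq\subset K$ by convexity so narrowness applies, is exactly what the paper's somewhat more compressed phrasing relies on.
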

\begin{proof}
Let $pq\in \Pi_K$ be the principal edge of $K$. Since the $pq$ does not belong to the set $\Pi_{>t}$ which we removed at Stage 1, its zone in $\Sigma(r_1)$ (and, in particular, in the restriction $\Sigma_\tau$ of  $\Sigma(r_1)$ to the principal middle slab $\tau$) consists of at most $r_0r_1\log r_1$ cells.  By the narrowness of $K$, these cells form the zone of $K\cap \tau$ within $\Sigma_{\tau}$. 
\end{proof}

\medskip
\noindent{\bf The net $Q_2$.} We now get rid of the sets $K\in \K$ that are not narrow.

\begin{proposition}\label{Prop:Narrow1}
There is a set $Q_2\subset \reals^2$ of cardinality $O\left(r_0^2 r_1/\eps\right)$ that, for each slab $\tau\in \Lambda(P,r_0)$, pierces every convex set $K\in \K_\tau$ that is not narrow in $\Sigma_\tau$. 
\end{proposition}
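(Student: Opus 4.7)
The plan is to show that every non-narrow $K\in \K_\tau$ must contain a specific ``witness'' point on one of the two vertical lines of $\Y(r_0)$ bounding $\tau$, identified via Lemma~\ref{Lemma:Triangle}, and then to cover all possible witnesses using small one-dimensional weak $\eps$-nets on each such vertical line.

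\medskip
\textbf{Characterization via Lemma~\ref{Lemma:Triangle}.} Fix $\tau\in \Lambda(r_0)$ and suppose $K\in \K_\tau$ is not narrow in $\Sigma_\tau$: there exist a transverse segment $pq\subset K$ and a cell $\mu\in \Sigma_\tau$ with $K\cap \mu\ne \emptyset$ and $pq\cap \mu=\emptyset$. Because no two non-vertical lines of $\R_1$ cross inside a given sub-strip of $\Sigma_\tau$, the cells of $\Sigma_\tau$ within the sub-strip containing $\mu$ are stacked horizontal slats, and both $K$'s and $pq$'s zones in this sub-strip are contiguous slat ranges with $K$'s strictly containing $pq$'s (since $pq\subset K$). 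Let $L_2\in\R_1$ be a non-vertical line separating $pq$'s slats from one of the new slats entered by $K$; then $K$ has points on both sides of $L_2$ inside $\tau$. Assuming WLOG that $pq$ lies below $L_2$ throughout $pq\cap \tau$, set $L_1:=L_{\mathrm{left}}$, $L_3:=L_{\mathrm{right}}$ (the two $\Y(r_0)$-boundaries of $\tau$), and let $\Delta$ be the cell of $\A(\{L_1,L_2,L_3\})$ lying in $L_1^-\cap L_2^-\cap L_3^-$. Then $K$ meets the three sides $e_1,e_2,e_3$ of $\Delta$: it meets $e_1\subset L_1$ at $pq\cap L_{\mathrm{left}}$, meets $e_3\subset L_3$ at $pq\cap L_{\mathrm{right}}$, and meets $e_2\subset L_2$ since the triangle $\mathrm{conv}\{p,q,x\}\subset K$ straddles $L_2$ inside $\tau$ (for any $x\in K\cap \mu$). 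Lemma~\ref{Lemma:Triangle} then yields: for every witness $p^*\in K\cap L_1^+\cap L_2^-\cap L_3^-$, the segment from $v:=L_2\cap L_3$ to $p^*$ meets $L_1$ at a point in $K\cap L_1$; equivalently, $K\cap L_1$ contains the central projection $\pi_v(p^*)$ of $p^*$ through $v$ onto $L_1$.

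\medskip
\textbf{The one-dimensional nets and counting.} For each slab $\tau\in \Lambda(r_0)$, each non-vertical line $L_2\in \R_1$, and each choice of boundary $L_1\in\{L_{\mathrm{left}},L_{\mathrm{right}}\}$ of $\tau$, let $v:=L_2\cap L_3$ (where $L_3$ is the opposite boundary of $\tau$), and consider the central projection $\pi_v:P\to L_1$. I place on $L_1$ a one-dimensional strong $\hat{\eps}$-net with respect to the multiset $\pi_v(P)$, for $\hat{\eps}:=\Theta(\sigma\eps/r_0)$; since the underlying ``range space'' on $L_1$ consists of intervals (a VC-dimension-$2$ hypergraph), such a net has size $O(1/\hat{\eps})=O(r_0/\eps)$. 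The middle-slab property (M2) provides $\Omega(\sigma\eps^2 n^2/r_0)$ transverse edges of $\Pi_K$, whose endpoints outside $\tau$ supply the valid witnesses $p^*$; a bipartite pigeonhole over which side of $\tau$ each endpoint lies on shows that $K\cap L_1$ contains $\pi_v$-images of $\Omega(\sigma\eps n/r_0)=\Omega(\hat{\eps}n)$ points of $P$, so the 1D $\hat{\eps}$-net on $\pi_v(P)$ pierces $K\cap L_1$. Taking the union of these nets over $r_0+1$ slabs, $r_1$ choices of $L_2\in \R_1$, and the two boundaries of each $\tau$ yields
\[
|Q_2|\;=\;O\!\left(r_0\cdot r_1\cdot \frac{r_0}{\eps}\right)\;=\;O\!\left(\frac{r_0^2 r_1}{\eps}\right),
\]
as claimed.

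\medskip
\textbf{Main obstacle.} The delicate point is the verification of Lemma~\ref{Lemma:Triangle}'s hypotheses when $pq$ does not stay on a single side of $L_2$ within $\tau$, which can occur because $pq$'s endpoints lie outside $\tau$ on opposite sides. In this case one must re-choose the separator $L_2\in \R_1$ — possibly as a line that separates a ``bulge'' of $K$ from the other parts of $K\cap \tau$, rather than separating $pq$ from a single newly-entered slat — and re-verify that $K$ still meets the three sides of the corresponding cell $\Delta$. A careful case analysis shows such an $L_2\in \R_1$ always exists (again of $r_1$ choices), so the counting is preserved up to constants. The second subtlety is the pigeonhole in Step~2 converting the $\Omega(\sigma\eps^2n^2/r_0)$ transverse edges into $\Omega(\hat{\eps}n)$ admissible witness endpoints on the appropriate side of both $\tau$ and $L_2$; this uses the bipartiteness of transverse edges (one endpoint on each side of $\tau$) together with the convexity of $K$ to control the fraction lying below $L_2$.
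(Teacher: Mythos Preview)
Your approach is close in spirit to the paper's—both route through Lemma~\ref{Lemma:Triangle} and pierce $K$ on a vertical boundary of $\tau$ via a one-dimensional net over central projections through vertices $L_2\cap L_3$ with $L_2\in\R_1$ and $L_3\in\Y(r_0)$—but there is a genuine gap: the paper \emph{also} adds to $Q_2$ all $O(r_1^2)$ vertices of $\Sigma(r_1)$ together with the set $X=\{L\cap L'\mid L\in\Y(r_0),\,L'\in\R_1\}$, and these additions are not optional. Your case analysis breaks down whenever the only lines of $\R_1$ separating $\mu$ from $pq$'s slat all cross $pq$ inside $\tau$. The paper's Case~2 is precisely this: the super-cell $\rho\supset\mu$ lies in the unique wedge of $\reals^2\setminus(L_\mu\cup L'_\mu)$ missed by $pq$, with \emph{both} $L_\mu,L'_\mu\in\R_1$ crossing $pq$ inside $\tau$; the paper concludes that $K$ then contains the arrangement vertex $L_\mu\cap L'_\mu$, already placed in $Q_2$. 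In the paper's Case~3, $pq$ crosses $\rho$ itself and $\mu$ is separated from $pq\cap\rho$ only by a vertical wall of $\Sigma(r_1)$; here $K$ must contain an endpoint of that wall, again a vertex of $\Sigma(r_1)$ already in $Q_2$. Your sentence ``a careful case analysis shows such an $L_2\in\R_1$ always exists'' does not account for either scenario.

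Even in the clean case where $pq$ does stay on one side of $L_2$ throughout $\tau$, Lemma~\ref{Lemma:Triangle} requires each witness $p^*$ to lie in $L_2^-$, not merely on the correct side of $\tau$. Your pigeonhole on (M2) yields $\Omega(\sigma\eps n/r_0)$ endpoints of $P_K$ outside $\tau$, but nothing forces them onto the $pq$-side of $L_2$; the appeal to ``convexity of $K$'' is not a proof. The paper handles this via $X\subset Q_2$: if $K\cap L_0^-$ straddled $L_\mu$ then $K$ would contain $L_0\cap L_\mu\in X$, so in fact all of $P_K\cap L_0^-$ lies on the $p$-side of $L_\mu$, giving $\Omega(\eps n)$ valid witnesses directly from (M1) and allowing the simpler threshold $\Theta(\eps)$ rather than your $\Theta(\sigma\eps/r_0)$.
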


\begin{proof}


We first add to $Q_2$ all the $O(r_1^2)$ vertices of the trapezoids of $\Sigma(r_1)$.
We then add to $Q_2$ the set $X$ of the $r_0r_1$ intersection points of the $r_0$ vertical lines of $\Y(P,r_0)$ with the lines of $\R_1$, and construct an even larger family $Y\subset \bigcup \Y(P,r_0)$ by intersecting each line of $\Y(P,r_0)$ with the edges of $P\times X$. Notice that the resulting point set has cardinality at most $O(r^2_0r_1n)$, as each line of $\Y(P,r_0)$ contains at most $r_0r_1n$ crossing points. Let $C_2>0$ be a sufficiently small constant that will be determined in the sequel. For each line $L\in \Y(P,r_0)$ we add to $Q_2$ every $\lceil C_2\eps n\rceil$-th point of $L\cap Y$, for a total of $O(r_0^2r_1/\eps)$ such points. 

Since $r_1=\Theta\left(\sqrt{1/\eps}\right)$, the overall cardinality of our auxiliary net $Q_2$ is bounded by $O\left(r_1^2+r_0^2r_1/\eps\right)=O\left(r_0^2r_1/\eps\right)$. It, therefore, suffices to check that $Q_2$ satisfies the asserted properties with a suitably small choice of the constant $C_2>0$.
To this end, we fix a slab $\tau\in \Lambda(P,r_0)$ and a convex set $K\in 
\K_\tau$ that is missed by $Q_2$.

Let $pq\in \Pi_K$ be the principal edge of $K$.
Since $K$ is missed by the points of $Q_2$, and $pq$ crosses both of the lines of $\Y(P,r_0)$ that delimit $\tau$, the edge $pq$ is not contained in a line of $\R_1$, and it cannot pass through a vertex of $\Sigma(r_1)$.
Assume with no loss of generality that $q$ lies to the right of $p$.


\begin{figure}[htb]
    \begin{center}
        \input{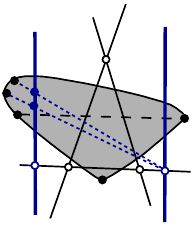_t}\hspace{2cm}\input{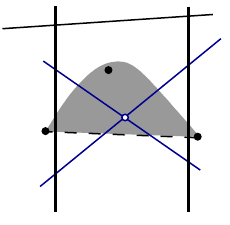_t}\hspace{2cm}\input{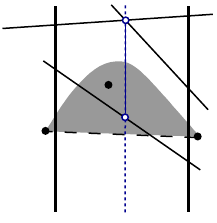_t}
        \caption{\small Proof of Proposition \ref{Prop:Narrow1}. Showing that every set $K\in \K_{\tau}$, that is not narrow in $\Sigma_\tau$, is pierced by a point of $Q_2$.
        Left: In the first case, the supercell $\rho$ of $u$ in $\A(\R_1)$ is separated from $pq\cap \tau$ by a line $L_\mu\in \R_1$. The interval $K\cap L_{0}$ is crossed by every edge that connects a point of $P_K\cap L_0^-$ to the vertex $L_\mu\cap L_1\in X$. Center: In the second case,  $\rho$ lies in the only wedge of $\reals^2\setminus (L_\mu\cup L_{\mu'})$ that is missed by $pq\cap \tau$. The vertex $L_{\mu}\cap L_{\mu'}$ belongs to $K$. Right: The cell $\mu$ is separated from $pq\cap \rho$ by a vertical wall $g$, so $K$ contains at least one of the endpoints of $g$.}
        \label{Fig:ForceNarrow1}
    \end{center}
\end{figure}


Assume for a contradiction that there is a point $u\in K\cap \tau$ that lies in a cell $\mu\in \Sigma_\tau$ outside the zone of $pq$. Let $\rho$ be the parent cell of $\mu$ in the arrangement $\A(\R_1)$.
Let $L_0$ (resp., $L_1$) be the vertical line adjacent to $\tau$ from the left (resp., right).
Let $L^-_0$ (resp., $L^+_1$) denote the halfplane of $\reals^2\setminus L_{0}$ (resp., $\reals^2\setminus L_1$) containing $p$ (resp., $q$). 
We distinguish between three possible cases as illustrated in Figure \ref{Fig:ForceNarrow1}.

\begin{enumerate}
\item Both cells $\mu$ and $\rho$ are separated from $pq\cap \tau$ by a line $L_\mu\in \R_1$ that misses $pq\cap \tau$. 

Since $K$ is not pierced by $X$, $K\cap L_0^-$ must lie to the same side of $L_\mu$ as $p$ (or, else, $K$ would contain the point $L_0\cap L_\mu\in Q_2$), and a symmetric property must hold for $K\cap L_1^+$. Since $|P_\tau\cap K|\leq \eps n/4$ (by the property (M1) of the middle slab $\tau$ with respect to $K$), at least one of the subsets $P_K\cap L_0^-,P_K\cap L_1^+$, let it be the former set, must contain more than $\eps n/4$ points of $P_K$.
Applying Lemma \ref{Lemma:Triangle} to the cell $\Delta\subset \reals^2\setminus (L_0\cup L_1\cup L_\mu)$ that contains $pq\cap \tau$ readily implies that $L_0\cap K$ must be crossed by all the edges connecting the vertex $L_1\cap L_\mu\in X$ and the $\Omega(\eps n)$ points of $P_K\cap L_{0}^-$. Given a sufficiently small choice of $C_1$, the intercept $K\cap L$ must contain a point of $Q_2$.

\item There exist lines $L_{\mu},L'_{\mu}$, each crossing $pq\cap \tau$, so that both $\rho$ and $\mu\subset \rho$ lie in the only wedge of $\reals^2\setminus (L_\mu\cup L'_\mu)$ that does not meet $pq$. In this case, $K$ must be pierced by the vertex $L_\mu\cap L'_\mu\in X\subset Q_2$.

\item  The principal edge $pq$ crosses $\rho$ but the cell $\mu$ is separated from $pq\cap \rho$ by a vertical line $L_\mu$ which supports a vertical wall $g$ on the boundary of $\mu$. (In particular, $K$ must cross $g$.) Since $pq$ crosses $\tau$ transversally, it must cross the line $L_\mu$ which is ``sandwiched" within $\tau$, and this crossing must happen outside $g$. Therefore, and due to its convexity, $K$ must contain at least one of the endpoints of $g$, which again belong to $Q_2$ as the vertices of $\Sigma(r_1)$.
\end{enumerate}

We conclude that, in either of the above three cases, $K$ must contain a point of $Q_2$. This contradiction confirms that $K$ is indeed narrow in $\Sigma_\tau$.
\end{proof}

\noindent{\bf Remark.} A careful look at the proof of Proposition \ref{Prop:Narrow1} indicates that, for each convex set $K\in \K_\tau$ that is missed by $Q_2$, its portion $K\cap \tau$ is contained in the zone (within $\Sigma_\tau$) of {\it any} segment $pq\subset K$ that crosses $\tau$ transversally. 






\medskip
We immediately add the points of $Q_2$ to our net $Q$, and remove from $\K$ (and, thus, from each subset $\K_\tau$) every set that is pierced by $Q_2$. As a result, for every $\tau\in \Lambda(P,r_0)$, every remaining set of $\K_\tau$ is narrow in the restriction $\Sigma_\tau$ of $\Sigma(r_1)$ to the principal middle slab $\tau$ of $K$.

Combing the bound $|Q_2|=O\left(r^2_0r_1/\eps\right)$ of Proposition \ref{Prop:Narrow1} with the bounds (\ref{Eq:Stage0}) and (\ref{Eq:Stage1}) on the auxiliary nets $Q_0$ and $Q_1$ that were constructed at the previous Stages 0 and 1, so far we have added a total of
\begin{equation}\label{Eq:EarlyStages}
f\left(\eps,\frac{\lambda}{r_0},\frac{\sigma}{2}\right)+O\left(r_0\cdot f(\eps\cdot\sigma\cdot r_0)+\frac{r_0^2r_1}{\eps}\right)
\end{equation}

\noindent points to the net $Q$. As previously mentioned, choosing $r_0$ to be a very small (albeit, constant) positive power of $1/\eps$ guarantees that our recurrence (\ref{Eq:Stage1}) in $\lambda$ has only constant depth; thus, $\sigma$ remains bounded from below by a certain positive constant. Hence, the second recursive term is essentially linear in $1/\eps$. Therefore, the contribution of (\ref{Eq:EarlyStages}) to the cardinality of $Q$ is effectively dominated by the non-recursive term, which is roughly bounded by $1/\eps^{3/2}$ for  $r_0\ll r_1=\Theta(\sqrt{1/\eps})$.\footnote{For $x,y\geq 1$, the notation $x\ll y$ means that $x=O\left(y^{O(\eta)}\right)$. (For $0<x,y\leq 1$, the notation $x\ll y$ means that $1/y\ll 1/x$.)}

\medskip
\noindent{\bf Discussion.} 
Note that a more economical construction of the sets $Y_L$, for $L\in \Y(P,r_0)$, would have resulted in an auxiliary net of size $O(r_0r_1/\eps)$, and with exactly same properties as argued in Proposition \ref{Prop:Narrow1}. However, the actual polynomial dependence on $r_0$ is immaterial for the eventual recurrence that we derive for $f(\eps)$ in Section \ref{Subsec:WrapUp}.

\subsection{Stage 3: The set $P_K$ -- from the low crossing number to expansion in $\Sigma(r_1)$}\label{Subsec:3}
At this stage we complete the construction of the net $Q$ for $\K(P,\Pi,\eps,\sigma)$. 
As each convex set $K\in \K$ is equipped with the principal middle slab $\tau$ and, therefore, assigned to the respective subfamily $\K_\tau\subseteq \K$, it suffices to construct a ``local" net $Q_\tau$ for each subfamily $\K_\tau$. 
To this end, we implement the paradigm of Section \ref{Subsec:ProofOutline} within the restriction $\Sigma_\tau$ of $\Sigma(r_1)$ to $\tau$, each of whose trapezoidal cells contains at most $n/r_1^2$ points of $P_\tau=P\cap \tau$. 

\paragraph{Definition.} 
We fix a sufficiently small constant $0<\hat{C}\leq 1/120$ that will be determined in the sequel, and denote

\medskip
\begin{center}
$\displaystyle \eps_1:=\frac{\eps_0}{40\log 1/\eps}$ and $\displaystyle\hat{\eps}:=\frac{\eps_0}{8r_0 r_1 \log r_1}$
\end{center}

\bigskip
\noindent{\bf The auxiliary nets $Q(P,s_0,\eps_1/4)$ and $Q^\triangle\left(P,\hat{C}\hat{\eps}\right)$.} We first guarantee that the points of $P_K$ are in a sufficiently convex position, and that they are sufficiently spread within $\Sigma_\tau$. (The latter property is essential for guessing the splitting line $L$, whose intercept $K\cap L$ is crossed by many edges of ${P_\tau\choose 2}$.) 
To this end, we introduce two auxiliary nets.

\begin{enumerate}
\item We construct a finer slab decomposition $\Lambda(P,s_0)$ where $s_0=\Theta\left(r_0^{1/\eta}\right)$ is yet another small constant power of $1/\eps$ that was mentioned in the beginning of Section \ref{Sec:Main}. Since $s_0+1$ is a multiple of $r_0+1$, we can assume with no loss of generality that $\Lambda(P,s_0)$ is a refinement of $\Lambda(P,r_0)$, that is, we have $\Y(P,s_0)\supset \Y(P,r_0)$. Furthermore, since $s_0\ll r_1=\Theta\left(\sqrt{1/\eps}\right)$, we can add the lines of $\Y(P,s_0)$ to the sample $\R_1$ with no affect on the asymptotic properties of $\A(\R_1)$ and its vertical decomposition $\Sigma(r_1)$. 

\medskip
We then apply Lemma \ref{Lemma:SampleCell} to construct an auxiliary net $Q(P,s_0,\eps_1/4)$ that pierces every convex set that is $(\eps_1/4)$-crowded in $\Lambda(P,s_0)$. Notice that

\begin{equation}\label{Eq:SpreadS}
|Q(P,s_0,\eps_1/4)|=O\left(s_0\cdot f\left({\eps_1} \cdot s_0\right)\right)=O\left(s_0\cdot f\left(\eps\cdot\frac{s_0\cdot \sigma}{r_0\log 1/\eps}\right)\right),
\end{equation}

where the last estimate uses the definition (\ref{Eq:DefEps0}) of $\eps_0$ in Section \ref{Subsec:0}.

Upon adding $Q(P,s_0,\eps_1/4)$ to $Q$, we can assume that each remaining convex set $K$ is $(\eps_1/4)$-spread in $\Lambda(P,s_0)$.

\item We invoke Theorem \ref{Thm:StrongNet} to construct a strong $\left(\hat{C}\hat{\eps}\right)$-net  $Q^\triangle\left(P,\hat{C}\hat{\eps}\right)$ over the set $P$ with respect to triangles, and add its points to the nets $Q$ and $Q_{\tau}$. 

Notice that this step increases the cardinality of $Q$ by 
\begin{equation}\label{Eq:TriangleNet}
\left|Q^\triangle\left(P,\hat{C}\hat{\eps}\right)\right|=O\left(\frac{1}{\hat{\eps}}\log \frac{1}{\hat{\eps}}\right)=O\left(\frac{r^2_0r_1}{\eps  \sigma}\log^2 \frac{1}{\eps}\right).
\end{equation}

\noindent

Accordingly, we remove from $\K$ and $\K_{\tau}$ every convex set that contains a triangle whose interior encloses at least $\hat{C}\hat{\eps} n$
points of $P$. 
\end{enumerate}

We now establish the key properties of the remaining sets $K\in \K$, which are missed by the combination $Q(P,s_0,\eps_1/4)\cup Q^\triangle\left(P,\hat{C}\hat{\eps}\right)$. For each $K\in \K$, we study the distribution of the points of $P_K\subseteq P\cap K$ in the decomposition $\Sigma_\tau\subset \Sigma(r_1)$ of the respective principal middle slab $\tau$ of $K$. 

\bigskip
\noindent {\bf The setup.} Fix $K\in \K_\tau$. Since $\tau$ is a middle slab for $K$, it satisfies the criteria (M1) and (M2) detailed in Section \ref{Subsec:1}. Namely, we have $|P_K\cap \tau|\geq \eps_0n$ by condition (M1) and the graph $\Pi_K$ contains $\Omega\left(\frac{\sigma}{r_0}{\lceil\eps n\rceil\choose 2}\right)$ edges that cross $\tau$ transversally by condition (M2); these include the unique principal edge $pq\in \Pi_K$ of $K$. 
We also assume that $K$ is narrow in $\Sigma_\tau$. Therefore, by Proposition \ref{Prop:Zone}, the zone of $K$ in $\Sigma_{\tau}$ is comprised of the at most $r_0r_1\log r_1$ trapezoidal cells that are crossed by $pq$.

\begin{figure}
    \begin{center}
        \input{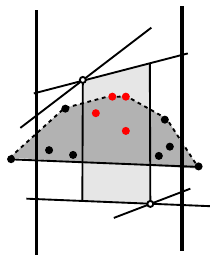_t}
        \caption{\small The principal middle slab $\tau$ of $K$, the principal edge $pq\in \Pi_K$, the set $K^+=\conv(P^+_K\cup \{p,q\})$, and a cell $\mu\in \Sigma_\tau$. (The points of $P_K(\mu)=P_K^+\cap \mu$ are colored red.)}
        \label{Fig:KLeft}
    \end{center}
\end{figure}


\medskip
In what follows, we can assume for each $K\in \K_\tau$ that at least $\eps_0 n/2-2$ of the points of $P_K\cap \tau$ lie above the line $L_{p,q}$ from $p$ to $q$. Otherwise, $K$ is treated in a fully symmetric manner by reversing the direction of the $y$-axis. In addition, we can assume that $p$ lies to the left of $q$. (Recall that $pq$ is not contained in a line of $\R_1$, and it cannot pass through a vertex of $\Sigma_\tau$.)

\medskip
Let $P_K^+$ denote the portion of $P_K\cap \tau$ above the line $L_{p,q}$. Denote $K^+:=\conv(P_K^+\cup \{p,q\})$. Notice that  $K^+$ is supported by the line $L_{p,q}$ at its boundary edge $pq$; see Figure \ref{Fig:KLeft}. Note also that $K^+$ too is narrow in $\Sigma_\tau$.

\bigskip
\noindent{\bf Definition.} For each (open) cell $\mu\in \Sigma_{\tau}$ we denote $P_K(\mu):=P_K^+\cap \mu$ and $k_\mu:=|P_K(\mu)|$. 
We say that a cell $\mu\in \Sigma_{\tau}$ is {\it full} with respect to $K^+$ if $k_\mu\geq \hat{\eps} n\geq 100$, where the second inequality always holds due to the lower bound $n\geq n_0(\eps)$ in (\ref{Eq:ManyPoints}). Let $\Sigma_K$ denote the sub-collection of all the full cells in $\Sigma_\tau$.

\begin{proposition}\label{Eq:ManyFullCells}
At least $\eps_0n/5$ points of $P_K^+$ lie in the cells of $\Sigma_K$.
\end{proposition}
\begin{proof}
Since $K$ intersects at most $r_0r_1\log r_1$ cells of $\Sigma_\tau$, the non-full cells of $\Sigma_\tau$ contain a total of at most $\eps_0 n/8$ points of $P_K^+$.
Since at most $2r_1$ points of $P$ are contained in the lines of $\R_1$, at least $\eps_0n/2-2-\eps_0 n/8-2r_1$ points of $P_K^+$ lie in the cells of $\Sigma_K$.
The claim now follows from the lower bound $n>n_0(\eps)$ in (\ref{Eq:ManyPoints}), and our choice (\ref{Eq:DefEps0}) of $\eps_0$.
\end{proof}


\begin{figure}[htb]
    \begin{center}
        \input{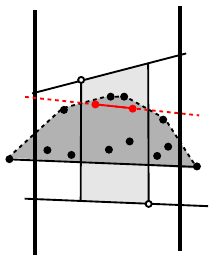_t}\hspace{2cm}\input{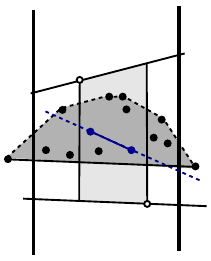_t}
        \caption{\small Left: The short edge $uv$ is good for $K$ because all the points of $P_K^+\cup\{p,q\}$ that lie outside $\mu$ are to the same side of $L_{u,v}$. Right: The short edge $uv$ is bad for $K$.}
        \label{Fig:GoodEdge}
    \end{center}
\end{figure}

\paragraph{Definition.} We say that an edge $uv\in {P_\tau\choose 2}$ is {\it short} if its endpoints lie in the same cell $\mu\in \Sigma_\tau$. 

\medskip
Notice that the set $K^+$ contains 
$
{k_\mu\choose 2}=\Omega(\hat{\eps}^2n^2)
$
short edges within every full cell $\mu\in \Sigma_K$.
Let $uv$ be such a short edge whose endpoints belong to $P_K(\mu)$, for some cell $\mu$ of $\Sigma_K$.
We say that $uv$ is {\it  good} for $K^+$ if all the points of $P_K^+\cup\{p,q\}$ outside $\mu$ lie to the same side of the line $L_{u,v}$, and otherwise we say that $uv$ is {\it bad} for $K^+$; see Figure \ref{Fig:GoodEdge}.

\medskip
Informally, the good edges span lines that are nearly tangent to $K$.\footnote{We emphasize that the definition of a short edge is independent of $K$ whereas the notion of a good edge assumes both the underlying convex set $K$, and the principal edge $pq\in \Pi_K$ which crosses $\tau$ transversally.}
In particular, for every good edge $uv$ the corresponding line $L_{u,v}$ must miss the principal edge $pq$. Since $uv$ lies above $pq$, the edges $uv$ and $pq$ are boundary edges of a convex quadrilateral.

\begin{proposition}\label{Prop:GoodEdge}
\begin{enumerate}[label=(\roman*)]

\item Let $u_1v_1,u_2v_2,\ldots,\ldots u_kv_k$ be good edges with respect to $K$ so that no two of these edges lie in the same cell of $\Sigma_K$. Then the $k+1$ edges of $\{u_jv_j\mid 1\leq j\leq k\}\cup\{pq\}$ lie on the boundary of the same convex $(2k+2)$-gon; see Figure \ref{Fig:GoodProp}.

\item Let $\mu\in \Sigma_{K}$ be a full cell. Then the points of $P_K(\mu)$ determine at least $\displaystyle\frac{3}{4}{k_\mu\choose 2}$ good edges. 

\end{enumerate}
\end{proposition}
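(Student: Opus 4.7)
I would establish the two parts separately.

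For part (i), the key observation is that each good edge $u_j v_j$ forces the line $L_{u_j, v_j}$ to be a supporting line of $\conv(\{p, q\} \cup \bigcup_i \{u_i, v_i\})$. By the definition of a good edge, all points of $P_K^+ \cup \{p, q\}$ outside the cell $\mu_j$ lie on one side of $L_{u_j, v_j}$; in particular, $p$ and $q$ lie on the same side. Since $u_j, v_j$ both lie above the line $L_{p, q}$ while $p, q$ sit on $L_{p, q}$ to the left and right of the slab $\tau \supset \mu_j$, a short slope analysis shows that this common side must be \emph{below} $L_{u_j, v_j}$; otherwise the slope of $L_{u_j, v_j}$ would force $p$ and $q$ onto opposite sides. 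Therefore all points $u_i, v_i$ with $i \neq j$ (which lie in cells $\mu_i \neq \mu_j$, and hence outside $\mu_j$), together with $p, q$, lie below $L_{u_j, v_j}$; this makes $u_j v_j$ a boundary edge of the convex hull of the $2k + 2$ points $\{p, q\} \cup \bigcup_i \{u_i, v_i\}$. The edge $pq$ is similarly a supporting edge, since every $u_i, v_i$ lies above $L_{p, q}$. Consequently, the $2k + 2$ points span a convex polygon whose boundary contains each of the edges $pq, u_1 v_1, \ldots, u_k v_k$.

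For part (ii), I would bound the number of bad edges in $P_K(\mu)$ by ${g_\mu \choose 2}/4$. The essential tool is the strong $\hat{\eps}$-net $Q^\triangle(\hat{\eps})$ for triangles which has already been added to $Q$: every triangle contained in $K$ encloses fewer than $\hat{C}\hat{\eps} n$ points of $P$. In particular, the triangle spanned by any three points of $P_K(\mu) \subset K$ contains fewer than $\hat{C}\hat{\eps} n$ points of $P$, placing $P_K(\mu)$ in approximate convex position. For each bad edge $uv$, there exist witnesses $a, b \in X := (P_K^+ \setminus \mu) \cup \{p, q\}$ with $a$ above and $b$ below $L_{u, v}$, yielding the two triangles $\triangle(u, v, a), \triangle(u, v, b) \subset K$. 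I would then run a double-counting argument: were there more than ${g_\mu \choose 2}/4$ bad edges, one could locate a triangle in $K$ containing at least $\hat{C}\hat{\eps} n$ points of $P$, contradicting the triangle-net property. Concretely, each bad edge is charged to a point of $P$ enclosed in an associated witness triangle, and approximate convex position is used to bound the number of bad edges charged to any single point by $O(g_\mu)$; the desired bound then follows for a sufficiently small $\hat{C}$, together with $g_\mu \geq \hat{\eps} n$.

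The main obstacle I anticipate lies in part (ii): executing the double-counting without over-charging, that is, carefully associating each bad edge to a witness triangle (and a point of $P$ inside it) so that each point of $P$ is charged by only few bad edges. The remainder of the argument is direct geometric reasoning from the supporting-line characterization of goodness established in part (i) and the near-convex position of $P_K(\mu)$ forced by the triangle net.
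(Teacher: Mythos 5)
Your part (i) is essentially the paper's: the paper simply states it ``readily follows by the definition of a good edge,'' and your supporting-line argument is a correct way to spell this out. (The slope analysis is not even needed: since $p,q$ and every $u_i,v_i$ with $i\neq j$ lie outside $\mu_j$ and in $P_K^+\cup\{p,q\}$, the definition of a good edge directly places them all on a single closed halfplane bounded by $L_{u_j,v_j}$, and the common side containing $u_j,v_j$ on the other; this makes $u_jv_j$ a hull edge.)

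For part (ii) there is a genuine gap. Your outline --- charge each bad edge to a point of $P$ inside a witness triangle, argue each point is charged $O(g_\mu)$ times --- does not obviously close. First, the witness triangle $\triangle(u,v,a)$ need not contain any point of $P$ in its interior, so the charging map is not well-defined. Second, even if every bad edge can be charged to some point, bounding the per-point multiplicity by $O(g_\mu)$ only gives $|E_{bad}|=O(g_\mu\cdot|S|)$ where $S$ is the set of charged points; since $S$ is not a priori bounded by $O(g_\mu)$, this does not yield $|E_{bad}|\leq\frac14\binom{g_\mu}{2}$. You correctly identify ``executing the double-counting without over-charging'' as the obstacle, but that is exactly where the real work lies, and your sketch does not resolve it.

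The paper's proof takes a structurally different and concrete route: it partitions $E_{bad}$ into three classes $E_1,E_2,E_3$ depending on whether $L_{u,v}$ crosses $pq$ and, if not, on which side of $\mu\cap pq$ the violating witness lies. Within each class it \emph{orients} each bad edge (towards the endpoint nearer $L_{u,v}\cap pq$ for $E_1$, rightwards for $E_2,E_3$). By pigeonhole, if any class exceeded $10\hat{C}\binom{g_\mu}{2}$ edges, some vertex $u\in P_K(\mu)$ would have out-degree at least $\hat{C}\hat{\eps}n$; the proof then exhibits a \emph{single} triangle $T\subset K$ (namely $\triangle pqu$ for $E_1$, and $\triangle uab$ for $E_2$, constructed via Claim \ref{Claim:Triangle} to locate a line $L^*$ that clips off $\mu$) containing all $d(u)\geq\hat{C}\hat{\eps}n$ out-neighbours of $u$, contradicting the triangle net $Q^\triangle(\hat{\eps})$. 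This vertex-based pigeonhole avoids the multiplicity issue in your charging scheme because the count accumulates at one apex $u$ and all its out-neighbours land inside a single triangle. If you want to salvage your approach, you would effectively have to reinvent this decomposition and orientation; as written, the bound does not follow.
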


\begin{figure}
    \begin{center}
        \input{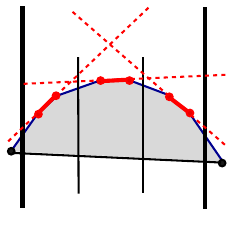_t}
        \caption{\small Proposition \ref{Prop:GoodEdge}  (i) -- The good edges $u_jv_j$ with supporting lines $L_{u_j,v_j}$ are depicted. Since these edges lie in distinct cells, they bound a convex polygon (together with the principal edge $pq$).}
        \label{Fig:GoodProp}
    \end{center}
\end{figure}

\begin{figure}
    \begin{center}
      \input{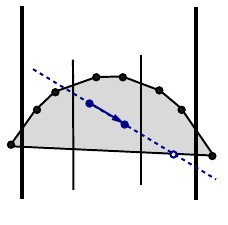_t}\hspace{1.2cm}\input{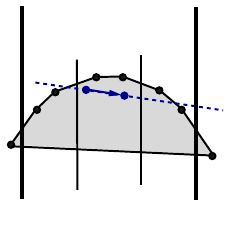_t}\hspace{1.2cm}\input{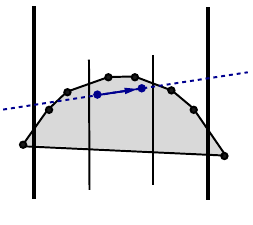_t}
        \caption{\small Proof of Proposition \ref{Prop:GoodEdge} (ii). The bad edges of $E_1$, $E_2$, and $E_3$ are depicted (resp., left, center, and right). Notice that every edge $uv\in E_1$ is directed towards the intersection $L_{u,v}\cap pq$, whereas the edges of $E_2\cup E_3$ are directed rightwards.}
        \label{Fig:Bad123}
    \end{center}
\end{figure}

\begin{figure}
    \begin{center}
        \input{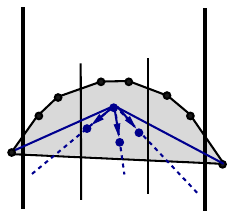_t}\hspace{2cm}\input{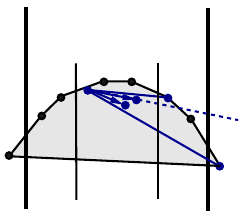_t}
        \caption{\small Proof of Proposition \ref{Prop:GoodEdge} (ii). The point $u$ and its outgoing bad edges of $E_1$ and $E_2$ (resp., left and right). In each case, the other endpoints of the outgoing edges lie inside a triangle $T\subset K$ with apex $u$.}
        \label{Fig:GoodPropBound}
    \end{center}
\end{figure}

\begin{proof}
The first part readily follows from the definition of a good edge. To see the second part, we consider the set $E_{bad}$ of all the bad edges that are spanned by the points of $P_K(\mu)$. 
To bound its cardinality, we represent $E_{bad}$ as the union of the following subsets:
\begin{itemize}
\item $E_1$ consists of all the bad edges $uv$ whose supporting lines $L_{u,v}$ cross the principal edge $pq$ of $K$; see Figure \ref{Fig:Bad123} (left). 

\item $E_2$ (resp., $E_3$) consists of all the bad edges $uv\in E_{bad}\setminus E_1$ for which there is a point $w\in P_K^+$ that lies in a cell $\mu'\in \Sigma_\tau\setminus\{\mu\}$ crossed by $pq$ to the right (resp., left) of $\mu\cap pq$, so that $w$ is separated by $L_{u,v}$ from $pq$. See Figure \ref{Fig:Bad123} (center and right).

\end{itemize}

\medskip
Provided that $\hat{C}<1/120$, it suffices to show that each of these sets $E_1,E_2,E_3$ has cardinality at most $10\hat{C}{k_\mu\choose 2}$. (Notice that $E_2$ and $E_3$ may overlap, and for every edge $uv\in E_2\cup E_3$ the respective line $L_{u,v}$ misses $pq$.)

\medskip
\noindent{\it Bounding $|E_1|$.} Assume for a contradiction that $|E_1|\geq 10\hat{C}{k_\mu\choose 2}$. We direct every edge $uv\in E_1$ from $u$ to $v$ if the intersection of $L_{u,v}$ with the principal edge $pq$ is closer to $v$ than to $u$ (and otherwise we direct the edge from $v$ to $u$). By the pigeonhole principle, and since $k_\mu\geq 100$, there is a vertex $u\in P_K(\mu)$ whose out-degree is at least $\hat{C}\hat{\eps}n$. Hence, the triangle $T=\triangle pqu\subset K^+$ contains at least $\hat{C}\hat{\eps}n$ points of $P$, so $K$ must have been previously pierced by the auxiliary net $Q^\triangle\left(P,\hat{C}\hat{\eps}\right)$, and subsequently removed from $\K_{\tau}$ and $\K$. See Figure \ref{Fig:GoodPropBound} (left).

\medskip
\noindent{\it Bounding $|E_2|$ and $|E_3|$.} Since the definitions of $E_2$ and $E_3$ are fully symmetrical (up to reversing the $x$-axis), we bound only the cardinality of the former set. We direct every edge $uv\in E_2$ rightwards; see Figure \ref{Fig:Bad123} (center).\footnote{Specifically, if this edge is directed from $u$ to $v$ then $pu$ and $vq$ are edges of the convex quadrilateral $\conv(p,q,u,v)$.}

Once again, we assume for a contradiction that $|E_2|\geq 10\hat{C}{k_\mu\choose 2}$, so there is a vertex $u$ whose out-degree $d(u)$ is at least $\hat{C}\hat{\eps}n$. 
As in the previous case, we will find a triangle $T\subset K$ which contains all the $d(u)\geq \hat{C}\hat{\eps}n$ endpoints of the edges of $E_2$ that emanate from $u$; see Figure \ref{Fig:GoodPropBound} (right).

Indeed, let $uv_1,uv_2,\ldots,uv_{d(u)}=uv^*$ be the counterclockwise sequence of all the outgoing edges of $u$ in $E_2$ (so that the occupied sector of $\reals^2$ to the right of $u$ does not contain any of the points $p,q$). 
By the definition of $E_2\subset E\setminus E_1$, we can choose a point $w$ in a cell $\mu' \in \Sigma_K$ that is separated by $L_{u,v^*}$ from the edge $pq$, and is crossed by $pq$ to the right of $\mu\cap pq$.
Our analysis is assisted by the following property.

\begin{claim}\label{Claim:Triangle}
There is a line $L'$ that crosses both edges $uw$ and $uq$ and so that the entire segment $\triangle uqw\cap L$ lies outside the interior of $\mu$.
\end{claim}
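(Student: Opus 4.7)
The plan is to produce $L^*$ as a supporting line of $\mu$ that has both $q$ and $w$ in the open half-plane opposite to $\mu$. The key reduction is the geometric fact that the segment $qw$ does not meet the interior of $\mu$; granted this, the two convex sets $\mu$ and $qw$ are disjoint, so by the standard separation theorem there is a line strictly separating them, which can be translated parallel to itself until it touches $\bd\mu$, yielding a supporting line $L^*$ of $\mu$ with $q,w$ both in the open half-plane on the far side of $\mu$. Since $u\in\mathrm{int}\,\mu$ lies on the $\mu$-side of $L^*$ and $q,w$ lie on the opposite side, the segments $uq$ and $uw$ each cross $L^*$ at an interior point; and the chord $\triangle uqw\cap L^*\subset L^*$ cannot enter $\mathrm{int}\,\mu$ because $L^*$ supports $\mu$. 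All three requirements of the claim follow.

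The remaining work is therefore to establish $qw\cap\mathrm{int}\,\mu=\emptyset$, which I would do by a two-case analysis exploiting that $\mu,\mu^*\in\Sigma_\tau$ are disjoint cells of a vertical decomposition of $\tau$. In the first, simpler case, $w_x\geq x_R(\mu)$; then, since $pq$ crosses $\tau$ transversally, we already have $q_x>x_R(\tau)\geq x_R(\mu)$, so the entire segment $qw$ lies in the closed half-plane $\{x\geq x_R(\mu)\}$, which meets $\mathrm{int}\,\mu$ only through the right wall of $\mu$ (measure zero); in fact $L^*$ may simply be taken to be the vertical line supporting the right wall of $\mu$. The non-trivial case is $w_x<x_R(\mu)$: here the $x$-range of $\mu^*$ overlaps that of $\mu$, and since $\mu^*$ and $\mu$ are disjoint open cells whose points $w$ and $u$ both lie above $pq$, the cell $\mu^*$ must lie above the top line $L_T$ of $\mu$ throughout the overlap, so $w_y>L_T(w_x)$. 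I would then use that $\mu^*\subset\tau$ and that $pq\cap\mu^*$ is strictly to the right of $pq\cap\mu$: the bottom edge of $\mu^*$ is a non-horizontal line of $\R_1$ that must rise from value $\leq q_y$ at some $x\in(x_R(\mu),x_R(\tau)]$ to a value $>L_T(x)$ on all of the $x$-range of $\mu$; combined with $q_x>x_R(\tau)$, a direct linear-interpolation computation yields that the $y$-value of $qw$ at every $x\in[x_L(\mu),x_R(\mu)]\cap[w_x,q_x]$ strictly exceeds $L_T(x)$. Thus $qw$ stays strictly above the top of $\mu$ within the $x$-range of $\mu$, and in particular $qw\cap\mathrm{int}\,\mu=\emptyset$.

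The hardest step is the linear-interpolation estimate in this second case: the argument is elementary but hinges on three quantitative inputs being used simultaneously — (i) $\mu^*$ not overlapping $\mu$ forces a lower bound on the bottom edge of $\mu^*$ above $L_T$ in the overlap region, (ii) $\mu^*\subset\tau$ upper-bounds how far to the right the bottom of $\mu^*$ can descend to meet $pq$, and (iii) $q$ lies farther to the right than this descent point. If any of these three were dropped, one can construct configurations (of the sort I initially tried) in which $qw$ crosses $\mu$ and the claim fails, so identifying and invoking each of them at the right moment is where care is needed.
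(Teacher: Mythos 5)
Your high-level reduction (produce a supporting line $L^*$ of $\mu$ with $q$ and $w$ both strictly on the far side; then $L^*$ meets $uq$ and $uw$, and the chord $\triangle uqw \cap L^*$ avoids $\mathrm{int}\,\mu$) is sound and matches the paper's endgame, and your Case 1 ($w_x\geq x_R(\mu)$, take the right wall of $\mu$) is fine -- it is exactly the paper's ``vertical wall'' case. The trouble is Case 2. The assertion that ``the cell $\mu^*$ must lie above the top line $L_T$ of $\mu$ throughout the overlap, so $w_y>L_T(w_x)$'' is not justified by the reason you give: that $w$ and $u$ both lie above $pq$ does not by itself rule out $\mu^*$ sitting \emph{below} $\mu$ over the overlapping $x$-range (one can draw disjoint trapezoids, both meeting $pq$, with $\mu^*\cap pq$ to the right of $\mu\cap pq$ and $w$ above $pq$ at some $w_x<x_R(\mu)$, yet with $\mu^*$ beneath $\mu$ on the overlap). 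The subsequent ``linear-interpolation'' estimate also needs $w_x$ to lie inside $\mu$'s $x$-range and needs $L^*_B$ (the bottom line of $\mu^*$) to reach $y\le 0$ at some $x\in(x_R(\mu),x_R(\tau)]$; neither is forced by the hypotheses as stated. So Case 2 has a genuine gap as written.

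You are also missing the observation that makes the claim a three-line affair. The cells $\mu$ and $\mu^*$ are \emph{distinct cells of the trapezoidal decomposition} $\Sigma(r_1)$: either they lie in different faces of $\A(\R_1)$, in which case some line $L^*\in\R_1$ has $\mu$ strictly on one side and $\mu^*$ strictly on the other, or they lie in the same face, in which case a vertical wall $L^*$ separates them. In both cases $L^*$ is disjoint from $\mathrm{int}\,\mu$ and separates $u$ from $w$; and $q$ lies on the $\mu^*$-side of $L^*$ because $L^*\cap pq$ must fall strictly between $pq\cap\mu$ and $pq\cap\mu^*$ (in the first case) or because the wall lies inside $\tau$ while $q$ is to the right of $\tau$ (in the second). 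In fact the case split is not even needed: $\mu$ and $\mu^*$ are disjoint open convex sets, so any line $\ell$ strictly separating them works, since $\ell$ must cross the line $L_{p,q}$ at a single point between the disjoint intervals $pq\cap\mu$ and $pq\cap\mu^*$, placing $q$ (which lies even further right along $L_{p,q}$) on the $\mu^*$-side. No estimate about where the segment $qw$ passes is required.
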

\begin{proof}[Proof of Claim \ref{Claim:Triangle}.]
If $\mu$ and $\mu'$ are separated by a line $L'\in \R_1$, then this line must also cross $uq$; see Figure \ref{Fig:Triangle}. Indeed, the principal edge $pq$ meets $\mu$ and $\mu'$ in this horizontal order, so $L'$ crosses $pq$ in-between the intersections $pq\cap \mu$ and $pq\cap \mu'$. Thus, $q$ must lie to the same side of $L'$ as $\mu'$.

\begin{figure}[htb]
    \begin{center}
        \input{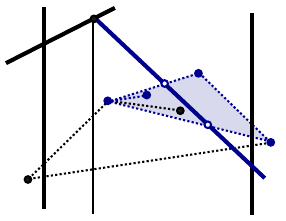_t}
        \caption{\small Proof of Claim \ref{Claim:Triangle}. If $u$ and $w$ are separated by a line $L'$ of $\R_1$, this line must also cross the edge $uq$, and the intersection $\triangle uwq\cap L'$ must lie outside the interior of $\mu$.}
        \label{Fig:Triangle}
    \end{center}
\end{figure}

On the other hand, if $u$ and $w$ lie within the same cell $\rho$ in $\A(\R_1)$, then their respective cells $\mu\subset\rho$ and $\mu'\subset \rho$ in $\Sigma(r_1)$ must be separated by a vertical wall. Since $q$ lies to the right of $\tau$, and $pq$ crosses $\mu'$ to the right of its intersection with $\mu$, both $uw$ and $uq$ must cross the vertical line $L'$ which supports that wall (as depicted in Figure \ref{Fig:GoodPropBound} (right)).

\smallskip
In either case, the intersection $\triangle uwq\cap L'$ lies outside the interior of $\mu$ by the definition of $\A(\R_1)$ and $\Sigma(r_1)$.
\end{proof}

Let $a$ and $b$ be the respective $L'$-intercepts of $uw$ and $uq$ as depicted in Figure \ref{Fig:Triangle}.
Claim \ref{Claim:Triangle}, along with the convexity of $\mu$, imply that the triangle $T:=\triangle uab$ indeed contains the $d(u)\geq \hat{C}\hat{\eps}n$ points $v_1,\ldots,v_{d(u)}$ within $\mu$. As  before, this is contrary to the assumption that $K$ is missed by the strong $(\hat{C}\hat{\eps})$-net $Q^\triangle\left(P,\hat{C}\hat{\eps}\right)$. This contradiction completes the proof of Proposition \ref{Prop:GoodEdge}.
\end{proof}


\paragraph{Definition.} Let $\mu$ be a full cell of $\Sigma_K$. We orient every good edge within $\mu$ from the left to the right. We say that a point $u\in P_K(\mu)$ is {\it good} if it is adjacent to at least $k_\mu/10$ outgoing good edges (where, as before, $k_\mu$ denotes $|P_K(\mu)|$).

\medskip
By combining Proposition \ref{Prop:GoodEdge} (ii) with Proposition \ref{Eq:ManyFullCells}, and recalling that every full cell $\mu\in \Sigma_K$ satisfies $k_\mu\geq 100$, we obtain the following property:

\begin{proposition}\label{Prop:GoodPoints}
Every full cell $\mu\in \Sigma_K$ contains at least $k_\mu/4$ good points of $P_K^+$, for a total of at least $\eps_0 n/20$ such points.
\end{proposition}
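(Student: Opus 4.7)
\textbf{Proof plan for Proposition \ref{Prop:GoodPoints}.}

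The plan is to prove the ``per-cell'' lower bound by a simple averaging (pigeonhole) argument on the out-degree sequence of the oriented graph of good edges within a single full cell $\mu\in \Sigma_K$, and then to sum over all cells of $\Sigma_K$ using the already-established lower bound on $\sum_{\mu\in \Sigma_K}g_\mu$.

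Fix a full cell $\mu \in \Sigma_K$, so that $g_\mu=|P_K(\mu)|\geq \hat{\eps}n\geq 3$. By Proposition \ref{Prop:GoodEdge}(ii), at least $\frac{3}{4}\binom{g_\mu}{2}$ of the $\binom{g_\mu}{2}$ edges spanned by $P_K(\mu)$ are good. Orient each good edge to the right (as in the definition preceding Proposition \ref{Prop:GoodPoints}); then
$$
\sum_{u\in P_K(\mu)}d^+(u)\;\geq\;\frac{3}{4}\binom{g_\mu}{2},
$$
where $d^+(u)$ denotes the number of outgoing good edges at $u$. Suppose, for contradiction, that fewer than $g_\mu/4$ points $u\in P_K(\mu)$ satisfy $d^+(u)\geq g_\mu/10$ (i.e., fewer than $g_\mu/4$ are good). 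Using the trivial bound $d^+(u)\leq g_\mu-1$ for the good points and $d^+(u)<g_\mu/10$ for the rest, we get
$$
\frac{3}{4}\binom{g_\mu}{2}\;\leq\;\frac{g_\mu}{4}(g_\mu-1)+\frac{3g_\mu}{4}\cdot\frac{g_\mu}{10},
$$
which, after multiplying out and cancelling, reduces to $2g_\mu\leq 5$, contradicting $g_\mu\geq 3$. Hence $\mu$ contains at least $g_\mu/4$ good points.

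It remains to sum over $\mu\in \Sigma_K$. By the discussion preceding the proposition, the total number of points of $P_K^+$ lying in full cells is $\sum_{\mu\in\Sigma_K}g_\mu\geq \eps_0n/5$; this bound uses $|P_K^+|\geq \eps_0n/2-2$, Proposition \ref{Prop:Zone} (which caps the zone of $K$ in $\Sigma_\tau$ at $tr_1\log r_1$ cells), and the definition $\hat{\eps}=\eps_0/(8tr_1\log r_1)$ to argue that non-full cells absorb at most $\eps_0n/8$ points of $P_K^+$. Summing the per-cell bound yields at least $\frac{1}{4}\sum_{\mu\in\Sigma_K}g_\mu\geq \eps_0 n/20$ good points overall, as claimed.

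No serious obstacle is expected: the argument is an elementary averaging argument together with the two ingredients already in place, namely the good-edge count of Proposition \ref{Prop:GoodEdge}(ii) and the lower bound on $\sum g_\mu$ in full cells. The only point requiring minor care is verifying that the constants ($3/4$, $1/4$, $1/10$) combine correctly to force $g_\mu\leq 5/2$ under the contradiction hypothesis, which is immediate.
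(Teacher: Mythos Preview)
Your argument is correct and is precisely the averaging/pigeonhole computation that the paper leaves implicit when it says ``The second part of Proposition~\ref{Prop:GoodEdge} implies the following property.'' The per-cell out-degree count combined with the already-stated lower bound $\sum_{\mu\in\Sigma_K} g_\mu\geq \eps_0 n/5$ gives exactly the claim, so there is nothing to add.
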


\paragraph{Definition.} 
Let $u\in P^+_K$ be a good point. The {\it characteristic wedge $\W_K(u)$ at $u$} is the smallest planar wedge with apex $u$ that lies in the hyperplane to the right of $u$, and contains $uq$ along with all the outgoing good edges $uv$ of $u$ (but does not contain $up$); see Figure \ref{Fig:ABDeep} (left). Note that $\W_K(u)$ lies entirely in the halfplane to the right of $u$. 

\medskip
Denote 

$$
D(u):=|\left(P_\tau\cap \W_K(u)\right)\setminus \{u\}|.
$$ 
\noindent That is, $D(u)$ the number of the edges in $uw\in {P_\tau\choose 2}$ that are adjacent to $u$ and lie within the triangle $\W_K(u)\cap \tau$.\footnote{Recall that $P_\tau$ denotes the set $P\cap \tau$. Notice that many of these points $w\in (P_\tau\cap \W_K(u))\setminus \{u\}$, which contribute to the count $D(u)$, may not belong to $P_K$ or even to $P\cap K$.}
Since the characteristic wedge $\W_K(u)$ encompasses all the outgoing good edges of $u\in P_K(\mu)$, we trivially have $D(u)\geq k_\mu/10\geq \hat{\eps}n/10$, and $D(u)$ can be much larger than $\eps n$ due to the additional points of $P_\tau\setminus P_K$ that potentially lie within $\W_K(u)\cap \tau$.

\begin{figure}[htb]
    \begin{center}
        \input{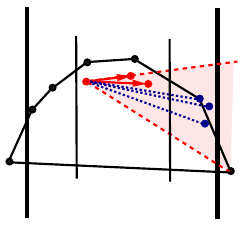_t}\hspace{2cm}\input{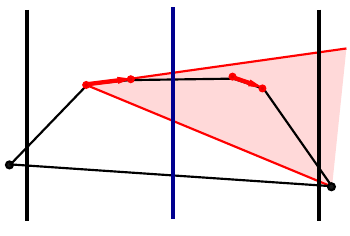_t}
        \caption{\small Left: The characteristic wedge $\W_K(u)$ to the right of $u$ encompasses $uq$ and all the outgoing good edges of $u$. $D(u)$ denotes the overall number of the edges in ${P_\tau\choose 2}$ that are adjacent to $u$ and lie within the triangle $\W_K(u)\cap \tau$. Right: For each set $K$ of type $i$ we use the principal vertical line $L_K\in \Y(P,s_0)$ to split the subset $P_K(i)$ of the good points of type $i$ into subsets $A_K$ and $B_K$, of cardinality at least $\eps_1n/4$ each. For each point $u\in A_K$, the characteristic wedge $\W_K(u)$ contains all the points of $B_K$.}
        \label{Fig:ABDeep}
    \end{center}
\end{figure}

To interpolate between the two favourable scenarios sketched in Section \ref{Subsec:ProofOutline}, we subdivide the good points $u\in P^+_K$ into $O(\log 1/\eps)$ classes according to their respective degrees $D(u)$.

\medskip
\noindent{\bf Definition.} 
Let $i$ be an integer. We denote $\delta_i:=2^i\eps_1/4=2^i\eps_0/(160\log 1/\eps)$.
We say that a good point $u\in P^+_K$ is of {\it type $i$} if $\delta_i n \leq D(u)<\delta_{i+1}n$.
We use $P_K(i)$ to denote the (possibly empty) subset of all the good points of $i$-type in $P^+_K$. 

\begin{proposition}\label{Prop:TypeSet}
There is an integer  $\lfloor \log (2\hat{\eps}/5\eps_1)\rfloor\leq i\leq \log 1/\eps_1$
so that $|P_K(i)|\geq \eps_1 n$.
\end{proposition}

\begin{proof}
Since the degrees $D(u)$ of the good points $u\in P_K^+$ satisfy $\hat{\eps}n/10\leq D(u)\leq n/\lceil(r_0+1)\rceil$,
each point $u\in P_K^+$ is of some type $i$ in the asserted range, where the second inequality $i\leq \log 1/\eps_1$ uses (\ref{Eq:SmallConstantEps}) (which holds for all $\eps<\tilde{\eps}$). Combining (\ref{Eq:SmallConstantEps}) with the definition of $\eps_1$ and $\hat{\eps}$ in the beginning of Section \ref{Subsec:3}, we conclude that there exist at most $2\log 1/\eps$ types with $|P_K(i)|>0$. By Proposition \ref{Prop:GoodPoints} there exist at least $\eps_0n/20$ good points in $P_K^+$, and each of them belongs to one of these types $i$. Hence, by the pigeonhole principle, there must be an integer $i$ so that $|P_K(i)|\geq \eps_0 n/(40\log 1/\eps)=\eps_1 n$.
\end{proof}

\medskip
\noindent{\bf Definition.} We say that a convex set $K\in \K$ is {\it of type $i$} if $i:=\min\{j\in \mathbb{Z}\mid |P_K(j)|\geq \eps_1n \}$. According to Proposition \ref{Prop:TypeSet}, each convex set is of exactly one well-defined type $i\in \mathbb{Z}$. 

Let $K\in \K$ be a convex set of type $i$. Since $K$ is $(\eps_1/4)$-spread in the secondary slab decomposition $\Lambda(P,s_0)$, there must be a line $L_K\in \Y(P,s_0)$, within the principal middle slab $\tau$ of $K$, so that at least $\eps_1 n/4$ good points in $P_K(i)$ lie to each side of $L_K$. In what follows, we refer to $L_K$ as the {\it principal vertical line} of $K$. 

\medskip
\noindent{\bf Remark.} For the success of our strategy sketched in Section \ref{Subsec:ProofOutline}, it is quintessential that the (remaining) convex sets $K\in \K$ can be split using at most $s_0$ principal vertical lines $L_K\in \Y\left(P,s_0\right)$.

\medskip
\noindent{\bf Definition.} Let $K\in \K$ be a convex set of type $i$. We use $A_K$ (resp., $B_K$) to denote the subset of the good points in $P_K(i)$ that lie to the left (resp., right) of $L_K$; see Figure \ref{Fig:ABDeep} (right). Note that $|A_K|,|B_K|\geq \eps_1 n/4$.

\begin{proposition}\label{Prop:GoodWedge}
Let $K\in \K$ be a convex set, and $u$ be a good point in $A_K$. Then the characteristic wedge $\W_K(u)$ at $u$ contains at least $k_\mu/10\geq\hat{\eps}n/10$ outgoing good edges $uv$, and all the points of $B_K$. 
\end{proposition}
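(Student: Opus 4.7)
The first assertion is essentially built into the definitions: a good point $u$ is by construction the source of at least $g_\mu/10$ outgoing good edges (oriented rightward), and every such edge lies in $\W_u$ by the very definition of $\W_u$. Since $\mu$ is full, $g_\mu \geq \hat{\eps}n$, giving the claimed bound $g_\mu/10 \geq \hat{\eps}n/10$ on the number of outgoing good edges inside $\W_u$.

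For the second assertion, the key preliminary observation is that $L\in \Y(s_0)\subset \R_1$, so the trapezoidal decomposition $\Sigma(r_1)$ respects $L$: every cell of $\Sigma_\tau$ lies entirely to one side of $L$. Hence any $w\in B_K(i)$ lies in a cell $\mu_w\ne \mu$. The plan is to fix an outgoing good edge $uv$ of $u$ (inside $\mu$) and an outgoing good edge $ww'$ of $w$ (inside $\mu_w$), and then apply Proposition~\ref{Prop:GoodEdge}(i) with $k=2$: the three edges $pq,\ uv,\ ww'$ are boundary edges of a common convex hexagon $H$ on the vertex set $\{p,q,u,v,w,w'\}$.

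The next step is to pin down the combinatorial structure of $H$. Using that outgoing good edges are oriented rightward and that $u,v\in\mu$ (left of $L$) while $w,w'\in\mu_w$ (right of $L$), the $x$-coordinates satisfy
\[
x_p < x_u < x_v \leq x_L \leq x_w < x_{w'} < x_q,
\]
and since $pq$ is the bottom edge of $H$ with the remaining four vertices above $L_{p,q}$, the upper convex chain of $H$ must visit them in this $x$-monotone order $p\to u\to v\to w\to w'\to q$. Convexity of the chain then forces the directions $uv,\ uw,\ uw',\ uq$, viewed from the fixed observer $u$, to appear in monotone clockwise order; this is the routine cross-product statement that, for a convex chain emanating from $u$, the angular direction of the moving point as seen from $u$ rotates monotonically. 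In particular, $uw$ lies in the angular sector bounded by the extreme rays $uv$ (most counterclockwise) and $uq$ (most clockwise).

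To conclude, one unpacks the definition of $\W_u$. Because $p$ lies strictly to the left of $u$ while $v,w,w',q$ all lie strictly to its right, the smallest wedge with apex $u$ containing $uq$ and every outgoing good edge (in particular $uv$) while missing $up$ is precisely the angular hull of these rightward directions inside the rightward open half-plane at $u$. This wedge necessarily contains the entire clockwise sector from $uv$ to $uq$, and hence it contains the direction $uw$; equivalently, $w\in \W_u$. The only delicate point in this plan is the combinatorial description of $H$'s upper chain, which in turn relies on the $L\in\R_1$ arrangement hygiene established in the setup; once that ordering is in hand, the rest reduces to a one-line angular observation.
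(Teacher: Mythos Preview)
Your proof is correct and follows essentially the same approach as the paper: both invoke Proposition~\ref{Prop:GoodEdge}(i) with a good edge at $u$ and a good edge at $w$ (in distinct cells, using that $L\in\Y(s_0)\subset\R_1$ separates the cells) to obtain a convex hexagon on $\{p,q,u,v,w,w'\}$, from which membership $w\in\W_u$ follows. The paper compresses the final step into a single sentence, whereas you spell out the $x$-monotone ordering of the upper chain and the resulting clockwise angular order $v,w,w',q,p$ around $u$, which is exactly the content hidden behind the paper's ``the desired property follows.''
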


Since we have $|B_K|\geq \eps_1 n/4$ for every remaining convex set $K\in \K$ of type $i$, the proposition implies the following property.

\begin{corollary}\label{Eq:PositiveTypeSet}
Every remaining set $K\in \K$ has type $0\leq i\leq \log 1/\eps_1$.
\end{corollary}

\begin{proof}[Proof of Proposition \ref{Prop:GoodWedge}]
The desired number of the good edges in the characteristic wedge $\W_K(u)$ at any point $u\in A_K$ follows from the construction of $\W_K(u)$ (and because each point in $A_K$ is good and, therefore, satisfies, $D(u)\geq k_\mu/10$).

To show that $\W_K(u)$ also contains all the points of $B_K$, let $\mu\in \Sigma_K$ be the full cell that contains $u$. Since every point $u'\in B_K$ lies in a cell $\mu'\in \Sigma_K$ to the right of $\mu$ and $L_K$, the desired property follows from the first part of Proposition \ref{Prop:GoodEdge}. Indeed, since we have $\min\{D(u),D(u')\}\geq \min\{k_\mu/10,k_{\mu'}/10\}\geq 10$, good edges $uv$ and $u'v'$ can be chosen within, respectively, $\mu$ and $\mu'$ so that the vertices $u,v,u',v'$ appear in this horizontal order. Since $p,u,v,u',v',q$ form a convex chain by Proposition \ref{Prop:GoodEdge}, and the characteristic wedge $\W_K(u)$ contains the edges $uv$ and $uq$, it must also contain $u'$, as depicted in Figure \ref{Fig:ABDeep} (right).
\end{proof}

To pierce the remaining sets $K\in \K_\tau$ for each $\tau\in \Lambda(P,r_0)$, we combine the following two properties whose somewhat technical proofs are relegated to Section \ref{Subsec:ProofsLemmata}. 

\begin{lemma}\label{Lemma:Thick}
Let $\tau$ be a slab in $\Lambda(P,r_0)$, and let $K\in \K_\tau$ be a convex set of type $0\leq i\leq \log 1/\eps_1$. For each $u\in \A_K$, its respective characteristic wedge $\W_K(u)$ contains $\Omega(\delta_i n)$ edges of ${P_\tau\choose 2}$ that are adjacent to $u$ and cross the principal vertical line $L_K$ of $K$ within the interval $K\cap L_K$ (for a total of $\Omega(\eps_1 \delta_i n^2)$ such edges that cross $K\cap L_K$). See Figure \ref{Fig:Thick}.
\end{lemma}

\begin{figure}[htb]
    \begin{center}
        \input{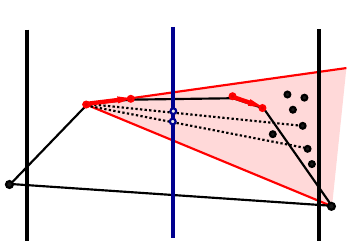_t}
        \caption{\small Lemma \ref{Lemma:Thick} -- a schematic illustration. For each $u\in \A_K$, its characteristic wedge $\W_K(u)$ contains $\Omega(\delta_i n)$ edges that are adjacent to $u$ and cross the interval $K\cap L_K$.}
        \label{Fig:Thick}
    \end{center}
\end{figure}

\begin{lemma}\label{Lemma:SparseWedge}
For each $\tau\in \Lambda(P,r_0)$, and each $0\leq i\leq \log 1/\eps_1$, there is a subset $\Pi(\tau,i)\subset {P_\tau\choose 2}$ with the following properties:
 
\begin{enumerate}[label=\roman*.]
\item We have that
 \begin{equation}\label{Eq:SparseDelta}
|\Pi(\tau,i)|=O\left(\frac{\delta_i n^2}{{r_1^2}\hat{\eps}}\right).
\end{equation}

\item For each convex set $K\in \K_\tau$ of the type $i$, and each point $u\in A_K$, the set $\Pi(\tau,i)$ contains all the edges $uw\in {P_\tau\choose 2}$ that are adjacent to $u$ and lie within the characteristic wedge $\W_K(u)$ at $u$.
\end{enumerate}
\end{lemma}

Notice that the density of the graph $\Pi(\tau,i)$ is proportional to $\delta_i$, giving rise to the following tradeoff:

\begin{enumerate}
\item If $\delta_i$ exceeds $r_1\eps_1$ then we are in the first favourable scenario of Section \ref{Subsec:ProofOutline} -- combining Lemma \ref{Lemma:Thick} and Lemma \ref{Lemma:SparseWedge} (ii) for each $u\in A_K$ yields that the intercept $K\cap L_K$ is crossed by roughly $(r_1\eps_1 n) \cdot (\eps_1 n)\simeq r_1\eps^2n$ edges.

\item On the other hand, as $\delta_i$ approaches $\eps$, the set $\Pi(\tau,i)$ contains roughly $n^2/r_1$ edges, which gives rise to the second favourable scenario of Section \ref{Subsec:ProofOutline} (e.g., via Lemma \ref{Lemma:Sparse}, or through a direct application of Lemma \ref{Lemma:Thick}). 
\end{enumerate}

Our net $Q(\tau,i)$ for the convex sets $K\in \K_\tau$ of type $i$ interpolates between these two extreme cases.

\paragraph{The nets $Q_\tau$ and $Q_3$.}  For every $0\leq i\leq \log 1/\eps_1$, and every line $L\in \Y(P,s_0)$ within $\tau$, we add every intersection of $L$ with an edge of $\Pi(\tau,i)$ to the set $X(L,i)$. We then select every $\lceil C_3\eps_1\delta_i n^2\rceil$-th point of $X(L,i)$ into our net $Q(L,i)$, for a sufficiently small constant $C_3>0$. 

\medskip
We then define

$$
Q(\tau,i):=\bigcup\{Q(L,i)\mid L\in \Lambda(P,s_0),L\subset \tau\}.
$$

\noindent and 
$$
Q_\tau:=\bigcup_{0\leq i\leq \log 1/\eps_1} Q(\tau,i)
$$

\medskip
\noindent The complete net $Q_3$ at Stage 3 is then given by

$$
Q_3:=Q(P,s_0,\eps_1/4)\cup Q^\triangle\left(P,\hat{C}\hat{\eps}\right)\cup\bigcup_{\tau\in \Lambda(P,r_0)}Q_\tau.
$$

\begin{theorem}
With a suitably small constant $C_3>0$, the net $Q_3$ pierces every remaining convex set in $\K$ that is missed by the combination $Q_0\cup Q_1\cup Q_2$. Furthermore, we have that 

\begin{equation}\label{Eq:Stage3}
|Q_3|=O\left(s_0\cdot f\left(\eps\cdot\frac{s_0\cdot \sigma}{r_0\log 1/\eps}\right)+\frac{r^2_0r_1}{\eps  \sigma}\log^2 \frac{1}{\eps}+\frac{s_0 r_0^3\log^3 1/\eps}{\sigma^2 r_1\eps^2}\right).
\end{equation}

\end{theorem}

\begin{proof}
Using the definition of $Q_3$, we argue for each slab $\tau\in \Lambda(P,r_0)$ that the respective net $Q_\tau$ pierces all the sets $K\in\K_\tau$ that were missed by the previous nets $Q_0,Q_1,Q_2,Q(P,s_0,\eps_1/4)$ and $Q^\triangle\left(P,\hat{C}\hat{\eps}\right)$. 
It suffices to check, for all $\tau\in \Lambda(P,r_0)$, and all $0\leq i\leq \log 1/\eps_1$, that every convex set $K\in \K_\tau$ of type $i$ is pierced by one of the nets $Q(L,i)\subset Q_\tau$ whose vertical lines $L\in \Y(P,s_0)$ lie within $\tau$.

Indeed, according to Lemma \ref{Lemma:Thick}, every point $u\in A_K$ gives rise to $\Omega(\delta_i n)$ outgoing edges that cross the intercept $K\cap L_K$ with the principal vertical line $L_K\in \Y(P,s_0)$ which separates $A_K$ and $B_K$, for a total of $\Omega(\eps_1\delta_i n^2)$ such edges. Hence, choosing a small enough constant $C_3>0$ guarantees that $K$ is pierced by $Q(L_K,i)$.

\medskip
For every type $1\leq i\leq 1/\eps_1$, and every line $L\in \Y(P,s_0)$ within $\tau$, the cardinality of $Q(L,i)$ is bounded by
$$
O\left(\frac{|X(L,i)|}{\delta_i{\eps_1}n^2}\right)=O\left(\frac{|\Pi(\tau,i)|}{\delta_i{\eps_1}n^2}\right)=O\left(\frac{\delta_in^2}{r_1^2\hat{\eps}\delta_i \eps_1n^2}\right)=O\left(\frac{r_0\log r_1\log 1/\eps}{r_1\eps^2_0}\right).
$$

\noindent where the second equality uses the bound of Lemma \ref{Lemma:SparseWedge} (ii), and the third one uses the definitions of $\eps_1$ and $\hat{\eps}$.

\medskip
Recall that $\Lambda(P,s_0)$ is a refinement of $\Lambda(P,r_0)$, every slab $\tau\in \Lambda(P,r_0)$ contains $O(s_0/r_0)$ lines of $\Y(P,s_0)$. Using this and the definition (\ref{Eq:DefEps0}), we can bound the cardinality of $Q_\tau$ by 
$$
O\left(\frac{s_0}{r_0}\log(1/\eps) \cdot \frac{r_0\log r_1\log 1/\eps}{r_1\eps^2_0}\right)=O\left(\frac{s_0 r^2_0\log^3 1/\eps}{\sigma^2 r_1\eps^2}\right).
$$

Repeating this bound for each slab $\tau\in \Lambda(P,r_0)$ and combining it with the prior bounds (\ref{Eq:SpreadS}) and (\ref{Eq:TriangleNet}) on the cardinalities of the nets $Q(P,s_0,\eps_1/4)$ and $Q^\triangle\left(P,\hat{C}\hat{\eps}\right)$, we conclude that
the overall cardinality of $Q_3$ indeed satisfies the bound (\ref{Eq:Stage3}).
\end{proof}

\noindent{\bf Remark.} Since $s_0$ and $r_0$ are very small (albeit, fixed) positive power of $1/\eps$ that satisfy $s_0\gg r_0\gg 1/\sigma$, the recursive term in (\ref{Eq:Stage3}) is again near-linear in $1/\eps$. Furthermore, the two non-recursive terms sum up to roughly $r_1/\eps+1/(r_1\eps^2)$, so choosing $r_1=\Theta(\sqrt{1/\eps})$ renders them close to $1/\eps^{3/2}$.

\medskip
In Section \ref{Subsec:WrapUp} we combine (\ref{Eq:Stage3}) with the bounds on the sizes of the auxiliary nets $Q_0,Q_1$, and $Q_2$ of the previous Stages 0 -- 2 to derive a recurrence for $f(\eps)$ whose solution is close to $1/\eps^{3/2}$.

\subsection{Proofs of Lemmas \ref{Lemma:Thick} and \ref{Lemma:SparseWedge}}\label{Subsec:ProofsLemmata}

\paragraph{Proof of Lemma \ref{Lemma:Thick}.}
Refer to Figure \ref{Fig:EarsG}.
Fix a point $u\in A_K$, and let $uv$ be the good edge that delimits from above its characteristic wedge $\W_K(u)$. (In other words, $uv$ attains the largest slope among the good edges that emanate from $u$ to the right.)

By Proposition \ref{Prop:GoodWedge}, the wedge $\W_K(u)$ contains all the points of $B_K$.
We fix any of these points $u'\in B_K$ together with the edge $u'v'$ which delimits from above the respective wedge $\W_K(u')$. 
Since the point $u'$ too is of type $i$, the characteristic wedge $\W_K(u')$ contains at least $\delta_i n$ points $w\in P_\tau$. Since the edges $uv$ and $u'v'$ are good, Proposition \ref{Prop:GoodEdge} implies that the three edges $uv,u'v'$ and $pq$ form a convex 6-gon $G$. 
It, therefore, suffices to show that all the resulting edges $uw$ cross the intercept $G\cap L_K\subset K\cap L_K$, where $L_K$ denotes the principal vertical line of $K$.

Let $L_0\in \Y(P,r_0)$ (resp., $L_1\in \Y(P,r_0)$) be the line that supports $\tau$ from the left (resp., right). Notice that $L_0$ is crossed by the edges $pq$ and $pu$, and $L_1$ is crossed by the edges $pq$ and $v'q$, and none of the remaining edges $uv$, $vu'$, and $u'v'$, of $G$ crosses $L_0$ or $L_1$.  
Thus, the intersection $G_\tau:=G\cap \tau$ is a convex $8$-gon. 
The claim now follows since (1) $\W_K(u')$ is separated from $u$ by $L_K$, and (2) every point $w\in \W_K(u)\cap P_\tau$ lies either inside $G_\tau\subset K$, or in the triangular ``ear" that is adjacent to the edge $v'q\cap \tau$ of $G_\tau$ and delimited by $L_1$ and $L_{u'v'}$.
$\Box$

\begin{figure}[htb]
    \begin{center}
      \input{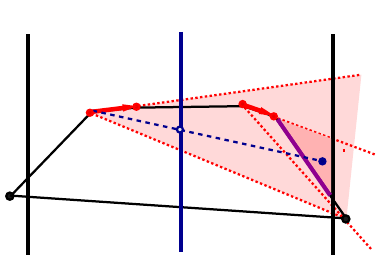_t}
        \caption{\small Proof of Lemma \ref{Lemma:Thick}. The convex $6$-gon $G=\conv(p,q,u,v,u',v')$ is depicted. Every point $w\in \W_K(u')$ is separated from $u$ by $L_K$. It lies either inside $K$, or in the triangular ``ear" that is adjacent to the edge $v'q\cap \tau$  of $G_\tau=G\cap \tau$ and delimited by $L_1$ and $L_{u',v'}$.}
        \label{Fig:EarsG}
    \end{center}
\end{figure}

\paragraph{Proof of Lemma \ref{Lemma:SparseWedge}.}
We first describe the sparse subgraph $\Pi(\tau,i)\subset {P_\tau\choose 2}$ for all $\tau\in \Lambda(P,r_0)$ and $0\leq i\leq \log 1/\eps_1$.
\begin{figure}[htb]
    \begin{center}
        \input{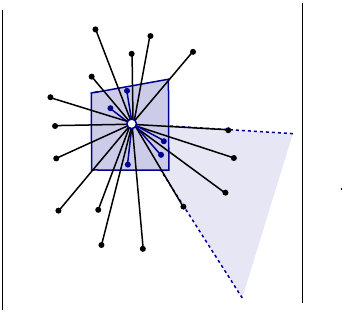_t}       
         \caption{\small Proof of Lemma \ref{Lemma:SparseWedge} -- defining the sparse graph $\Pi(\tau,i)\subset {P_\tau\choose 2}$. We define $z=O(1/\delta_i)$  sectors $\W_j(u)$ with apex $p$. In each sector, the number of the edges of ${P_\tau\choose 2}$ that are adjacent to $p$ ranges between $2\lceil 2\delta_i n\rceil$ and $3\lceil 2\delta_i n\rceil$. We add the edges of $\W_j(u)$ to $\Pi(\tau,i)$ only if this sector is {\it rich} and encompasses at least $\hat{\eps}n/10$ short edges.}\label{Fig:Sparse}
    \end{center}
\end{figure}

\medskip
\noindent{\it The graph $\Pi(\tau,i)$.} Denote $P_\tau:=P\cap \tau$ and $n_\tau:=|P\cap \tau|$.
For each $u\in P_\tau$ which lies in some cell $\mu\in \Sigma_{\tau}$ we partition the $n_\tau-1$ adjacent edges $uv_1,\ldots,uv_{n_\tau-1}\in {P_\tau\choose 2}$ (which appear in this clockwise order around $u$) into $z=O(1/\delta_i)$ blocks $\E_j$, for $0\leq j\leq z-1$, so that every block but the last one contains $\lceil 2\delta_i n\rceil$ edges, and the last block contains at most $\lceil 2\delta_i n\rceil$ edges. 
If $z\geq 4$, we define $z$ canonical sectors with apex $u$, where each sector $\W_j(u)$
encompasses three consecutive blocks $\E_j,\E_{j+1},\E_{j+3}$ of edges, and the indexing is modulo $z$. See Figure \ref{Fig:Sparse}. Otherwise (i.e., if $\lceil 2\delta_i n\rceil >(n-1)/3$), we define only one sector $\W_0(u)=\reals^2$.

Notice that, given that $z\geq 4$, the neighboring sectors overlap, each sector $\W_j(u)$ satisfies $2\lceil2\delta_i n\rceil\leq |(\W_j(u)\cap P_\tau)\setminus \{u\}|\leq 3\lceil 2\delta_i n\rceil$, and each edge $uv$ lies in exactly three of the sectors of $u$. 

\medskip
We say that the sector $\W_j(u)$ is {\it rich} if $|(\W_j(u)\cap P_\mu)\setminus\{u\}|\geq \hat{\eps} n/10$. In other words, the sector $\W_j(u)$ must contain at least $\hat{\eps} n/10$ short edges $uv$.

\medskip
We add to $\Pi(\tau,i)$ every edge $uv$ that lies a rich sector of at least one of its endpoints $u$ or $v$.

\bigskip
\noindent{\it Analysis.} To see the first property of $\Pi(\tau,i)$, it is sufficient to show that any point $u\in P_\tau$ contributes $\displaystyle O\left(\frac{\delta_i n}{{r_1^2}\hat{\eps}}\right)$ edges to the set $\Pi(\tau,i)$.

Indeed, recall that for each cell $\mu\in \Sigma_{\tau}$ we have that $n_\mu=|P_\mu|\leq n/r_1^2$. Therefore, the pigeonhole principle implies that for each $u\in P_\mu$ there can be only $\displaystyle O\left(\frac{n}{r_1^2\hat{\eps}n}\right)=\displaystyle O\left(\frac{1}{r_1^2\hat{\eps}}\right)$ rich sectors $\W_j(u)$, which satisfy $|\W_j(u)\cap P_\mu\setminus\{u\}|\geq \hat{\eps} n/10$, and any such sector contributes $O(\delta_i n)$ edges to $\Pi(\tau,i)$. 

\medskip
For the second property, we recall that, for every good point $u\in A_K$ that lies in some full cell $\mu\in \Sigma_K$, the respective wedge $\W_K(u)$ contains at most $2\delta_in$ outgoing edges $uw$ within $\tau$ and, therefore, is contained in (at least) one of the sectors $\W_j(u)$.
Proposition \ref{Prop:GoodWedge} now implies that this sector $\W_j(u)$ is rich, for it contains at least $k_\mu/10\geq \hat{\eps}n/10$ outgoing short edges $uv$ (where, as before, $k_\mu$ denotes $|P_K(\mu)|$). $\Box$


\subsection{The final recurrence} \label{Subsec:WrapUp}
In this section we derive the complete recurrence of the general form (\ref{Eq:GenRecurrence}) for the quantity $f(\eps,\lambda,\sigma)$. As mentioned in Section \ref{Subsec:RecursiveFramewk}, this will yield a simpler recurrence of the form (\ref{Eq:GenRecurrenceSimple}) for the quantity $f_2(\eps)=f(\eps)$. This ultimately solves to $\displaystyle f(\eps)=O\left(\frac{1}{\eps^{3/2+\gamma}}\right)$, where $\gamma>0$ is an arbitrary small constant that has been fixed in the beginning of this proof.

As mentioned in the beginning of this section, our recurrence will involve the auxiliary parameters $r_0\ll s_0$ which are very small (albeit fixed) degrees of $1/\eps$ that depend on $\gamma$ and satisfy $r_0=\Theta(s_0^\eta)$ and $s_0=\Theta\left(1/\eps^\eta\right)$, where $\eta:=\gamma/100$.

As mentioned in Section \ref{Sec:Prelim}, we fix a suitably small constant $0<\tilde{\eps}<1$ that satisfies (\ref{Eq:SmallConstantEps}) and use the old bound $\displaystyle f(\eps)=O\left(\frac{1}{\tilde{\eps}^{2}}\right)=O(1)$ of Alon {\it et al.} \cite{AlonSelections}  whenever $\eps\geq \tilde{\eps}$. (The choice of $\tilde{\eps}$ will affect the multiplicative constant in the eventual asymptotic bound on $f(\eps)$.) Assume then that $\eps<\tilde{\eps}$.

\paragraph{Bounding $f(\eps,\lambda,\sigma)$.} To obtain a bound of the general form (\ref{Eq:GenRecurrence}) for $f(\eps,\lambda,\sigma)$, where $\eps<\tilde{\eps}$ and $\lambda>\eps$, we fix a family $\K=\K(P,\Pi,\eps,\sigma)$ that satisfies $|\Pi|/{P\choose 2}\leq\lambda$, and bound the overall cardinality of the point transversal $Q$ for $\K$ that was constructed in Sections \ref{Subsec:0} through \ref{Subsec:3}.  As explained in the beginning of Section \ref{Sec:Main}, we can also assume that the cardinality of $P$ is bounded from below by (\ref{Eq:ManyPoints}), so that $|P|=n>n_0(\eps)$.

\medskip
Combining (\ref{Eq:EarlyStages}) and (\ref{Eq:Stage3}) yields the following bound on the overall cardinality of our net $Q$:

$$
f\left(\eps,\frac{\lambda}{r_0},\frac{\sigma}{2}\right)+
$$

$$
+O\left(r_0\cdot f(\eps\cdot\sigma\cdot r_0)+\frac{r_0^2r_1}{\eps}+s_0\cdot f\left(\eps\cdot\frac{s_0\cdot \sigma}{r_0\log 1/\eps}\right)+\frac{r^2_0r_1}{\eps  \sigma}\log^2 \frac{1}{\eps}+\frac{s_0 r_0^3 \log^3 1/\eps}{\sigma^2 r_1\eps^2}\right).
$$

\noindent By substituting $r_1=\Theta\left(\sqrt{1/\eps}\right)$ and rearranging the terms, we conclude for all $\eps<\tilde{\epsilon}$ and $\lambda>\eps$ that

\begin{equation}\label{Eq:Q}
f(\eps,\lambda,\sigma)\leq f\left(\eps,\frac{\lambda}{r_0},\frac{\sigma}{2}\right)+O\left(\Psi(\eps,\sigma)\right),
\end{equation}

\noindent where 
$$
\Psi(\eps,\sigma):=s_0\cdot f\left(\eps\cdot s_0\cdot \frac{\sigma}{r_0\log 1/\eps}\right)+r_0\cdot f(\eps\cdot r_0\cdot \sigma)+\frac{r_0^2}{\eps^{3/2}}+\frac{r_0^2}{\sigma \eps^{3/2}}\log^2 \frac{1}{\eps}+\frac{s_0 r_0^3 \log^3 1/\eps}{\sigma^2 \eps^{3/2}}.
$$

\noindent Our choice of $r_0$ and $s_0$, in combination with (\ref{Eq:SmallConstantEps}), yields

$$
\Psi(\eps,\sigma)=O\left(s_0\cdot f\left(\eps\cdot s_0^{1-2\eta}\cdot \sigma\right)+r_0\cdot f(\eps\cdot r_0\cdot \sigma)+\frac{1}{\sigma^2\eps^{3/2+7\eta}}\right).
$$

\paragraph{The recurrence for $f(\eps)$.} We begin with $f(\eps)=f(\eps,1,1)$ and recursively apply the inequality (\ref{Eq:Q}) to the ``leading" term, which involves the density $\lambda$, while keeping the parameters $\eps,r_0,s_0$ and $r_1$ fixed. This recurrence in $\lambda$ bottoms out when the value of $\lambda$ falls below $\eps$. 
Since $r_0$ is a fixed (though very small) positive power of $1/\eps$, the inequality (\ref{Eq:Q}) is applied $J=O\left(\log_{r_0} 1/\eps\right)=O(1)$ times.
Hence, the value of the restriction threshold $\sigma$ in the $j$-th application is bounded from below by $1/2^{j-1}=\Theta(1)$.
Using the trivial property that $f(\eps,\lambda',\sigma')\geq f(\eps,\lambda,\sigma)$ and $\Psi(\eps,\sigma')\geq \Psi(\eps,\sigma)$ for all $0\leq \lambda\leq \lambda'\leq 1$ and $0<\sigma'\leq \sigma\leq 1$, and that $J=O(1)$, we conclude that

\begin{equation}\label{Eq:ReqIterate}
f(\eps)=f(\eps,1,1)\leq f\left(\eps,\eps,2^{-J}\right)+\sum_{i=1}^J\Psi\left(\eps,2^{-j+1}\right)=f\left(\eps,\eps,2^{-J}\right)+O\left(\Psi\left(\eps,2^{-J}\right)\right).
\end{equation}


\noindent Note that
\begin{equation}\label{Eq:BoundPsi}
\Psi\left(\eps,2^{-J}\right)=O\left(s_0\cdot f\left(\eps\cdot s_0^{1-2\eta}\right)+r_0\cdot f(\eps\cdot r_0)+\frac{1}{\eps^{3/2+7\eta}}\right).
\end{equation}

\medskip
\noindent To bound $f\left(\eps,\eps,2^{-J}\right)$, we invoke Lemma \ref{Lemma:Sparse} with $r:=r_0$, which yields

\begin{equation}\label{Eq:RecurrenceSparse}
f(\eps,\eps,2^{-J})=O\left(r_0\cdot f\left(\eps\cdot r_0\right)+\frac{r_0^2}{\eps}\right)=O\left(r_0 \cdot f\left(\eps\cdot r_0\right)+\frac{1}{\eps^{1+2\eta}}\right).
\end{equation}

\noindent Substituting (\ref{Eq:RecurrenceSparse}) and (\ref{Eq:BoundPsi}) into (\ref{Eq:ReqIterate}) readily gives

\begin{equation}\label{Eq:RecurrenceComplete}
f(\eps)=O\left(s_0\cdot f\left(\eps\cdot s_0^{1-2\eta}\right)+r_0\cdot f\left(\eps\cdot r_0\right)
+\frac{1}{\eps^{3/2+7\eta}}\right),
\end{equation}

\noindent where the implicit constants do not depend on the particular choice of the constant upper threshold $\tilde{\eps}$ for $\eps$ as long as the inequality (\ref{Eq:SmallConstantEps}) is satisfied. (As previously mentioned, we routinely omit the constant factors within the recursive terms of the form $f(\eps\cdot h)$.) A suitably small choice of $\tilde{\eps}>0$ guarantees that $\eps$ indeed increases by a factor of at least $2$ with each invocation of the recurrence.

\bigskip
\noindent{\bf Solving the recurrence for $f(\eps)$.} This last recurrence  (\ref{Eq:RecurrenceComplete}) bottoms out when $\eps\geq \tilde{\eps}$, in which case we have $f(\eps)=O\left(1/\tilde{\eps}^2\right)=O(1)$. Since $\eta=\gamma/100$, fixing a suitably small constant threshold $\tilde{\eps}>0$, and following the standard induction argument which applies to recurrences of this type (see, e.g., \cite{MatWag04}, and also \cite{Envelopes3D,EnvelopesHigh} and \cite[Section 7.3.2]{SA}), yields

\begin{equation}\label{Eq:ExplicitInequalityA}
f(\eps)\leq \frac{F}{{\eps}^{3/2+\gamma}},
\end{equation}
for all $\eps>0$, where $F\geq f\left(\tilde{\eps}\right)=O\left(1/\tilde{\eps}^2\right)$ \cite{AlonSelections} is a suitably large constant.

\smallskip
For the sake of completeness, we spell out the key details of this generic induction.\footnote{Since we trivially have $f(\eps)\leq f(\eps')$ for all $0<\eps/2\leq \eps'\leq \eps\leq 1$, it is sufficient to establish the asymptotic bound for the values $\eps$ of the form $\eps=1/2^j$, where $j\in \mathbb{N}$.} 
We first choose a constant threshold $\varepsilon(\gamma)$ so that the following conditions are satisfied for all $\eps\leq \varepsilon(\gamma)$: (i) the inequality (\ref{Eq:SmallConstantEps}) holds, and (ii) each of the recursive terms of the form $f(\eps')$ in (\ref{Eq:RecurrenceComplete})
involves $\eps'>2\eps$. 

Since the inequality (\ref{Eq:ExplicitInequalityA}) trivially holds for $\tilde{\eps}\leq \eps\leq 1$ whenever $F\geq f\left(\tilde{\eps}\right)$, it suffices to choose the constant threshold $\tilde{\eps}\leq \varepsilon(\gamma)$ that would facilitate the induction step for all the smaller values $\eps<\tilde{\eps}$.
To  this end, we plug the desired induction assumption (with so far unknown $F>0$) in (\ref{Eq:RecurrenceComplete}) so as to replace the two recursive terms $f\left(\eps\cdot s_0^{1-2\eta}\right)$ and $f\left(\eps\cdot r_0\right)$. This substitution readily yields

\begin{equation}
f(\eps)\leq \frac{F}{\eps^{3/2+\gamma}}\cdot \Phi(\eps),
\end{equation}

\noindent where 

\begin{equation}\label{Eq:Phi}
\Phi(\eps)\leq H_1\cdot\frac{s_0}{s_0^{(1-2\eta)(3/2+\gamma)}}+H_2\cdot\frac{r_0}{r_0^{3/2+\gamma}}+H_3\cdot\eps^{\gamma-7\eta}.
\end{equation}
Note that the positive constants $H_1,H_2$ and $H_3$ are determined by (\ref{Eq:RecurrenceComplete}), and do not depend on the particular choice of $\tilde{\eps}>\varepsilon(\gamma)$ or $F>0$. Using that $\eta=\gamma/100$, we at last choose the threshold $\tilde{\eps}\leq \varepsilon(\gamma)$ so that the right hand side of (\ref{Eq:Phi}) remains smaller than $1$ for all $\eps<\tilde{\eps}$. 
Invoking (\ref{Eq:RecurrenceComplete}), and using the induction assumption with this particular choice of $\tilde{\eps}$ and $F=f\left(\tilde{\eps}\right)$, indeed confirms the inequality (\ref{Eq:ExplicitInequalityA}) for all $\eps<\tilde{\eps}$. This concludes the proof of Theorem \ref{Thm:Main}. $\Box$

\section{Concluding remarks}\label{Sec:Conclude}
\begin{itemize}
\item Our analysis is largely inspired by the partition-based proof  \cite{ManyCells} of the Szemer\'{e}di-Trotter Theorem \cite{SzT} on the number of point-line incidences in the plane.  In the case at hand, narrow convex sets are viewed as abstract lines, so a non-trivial incidence bound implies that a typical point $u\in P$ is involved in $o(1/\eps)$ such canonical sets (which are naturally associated with the surrounding sectors $\W_j(u)$). This gives rise to a sparse restriction graph $\Pi$. 

Therefore, it is no surprise that our main decomposition $\Sigma(r_1)$ overly repeats the one that was used by Clarkson {\it et al.} \cite{ManyCells} in order to extend the Szemer\'{e}di-Trotter bound to more general settings (e.g., incidences between points and unit circles, and incidences between lines and certain cells in their arrangement). 

\item As previously mentioned, we did not seek to optimize the implicit constant factor in our asymptotic bound for $f(\eps)$. However, it is known to heavily depend on the choice of the constant $\gamma>0$, and it is at least $2^{\Omega\left(1/\gamma^2\right)}$ for recurrences of the form (\ref{Eq:RecurrenceComplete}); see the discussion in the end of  \cite[Section 7.3.2]{SA}. 

\item Our proof of Theorem \ref{Thm:Main} is fully constructive, and the resulting net includes  points of the following types:

\begin{enumerate}
\item The vertices of the decompositions $\Sigma(r_1)$ which arise in the various recursive instances.

\item 1-dimensional $\hat{\eps}$-nets within lines $L\in \Y(P,r_0)$, for $\hat{\eps}=\tilde{\Omega}(\eps^{3/2})$. In each net of this kind, the underlying point set is composed of the $L$-intercepts of the edges of ${P\choose 2}$. These edges typically belong to one of the sparser graphs $\Pi_{>t}$ (in Section \ref{Subsec:2}) or $\Pi(\tau,i)$ (in Section \ref{Subsec:3}).

\item 1-dimensional $\hat{\eps}$-nets within lines $L\in \Y(P,r_0)$, for $\hat{\eps}=\tilde{\Omega}(\eps^{3/2})$, where the underlying point sets are composed of the $L$-intercepts of the ``mixed" edges, which connect the vertices of $\Sigma(r_1)$ to the points of $P$.

\item 2-dimensional $\hat{\eps}$-nets of Theorem \ref{Thm:StrongNet} with respect to triangles in $\reals^2$.
\end{enumerate}

\item Our construction and its analysis combine classical elements of the 30-year old theory of linear arrangements in computational geometry (which generalize to any dimension) with a few ad-hoc arguments in $\reals^2$ (which do not immediately extend to higher dimensions).
A recent study \cite{STOC} of the author combines the ``incidences counting" strategy for the ``narrow" convex sets (which loosely resemble hyperplanes) with novel selection-type results \cite{AlonSelections} in order to show that the comparable bound $f_d(\eps)=O\left(1/\eps^{d-1/2+\gamma}\right)$, for any $\gamma>0$, holds in all dimensions $d\geq 3$. Its remarkable that, despite the superficial similarity with the bound of Theorem \ref{Thm:Main}, applying the higher-dimensional analysis in the plane comes short of re-establishing Theorem \ref{Thm:Main}.


The author conjectures that the actual asymptotic behaviour of the functions $f_d(\eps)$  in any dimension $d\geq 1$ is close to $1/\eps$, as is indeed the case for their ``strong" counterparts with respect to simply shaped objects in $\reals^d$ \cite{HW87}.
Its worth mentioning that our main argument in Section \ref{Subsec:3} exploits the delicate interplay between the two notions of $\eps$-nets which was partly explored by Mustafa and Ray \cite{MustafaRay} and, more recently, by Har-Peled and Jones \cite{NetShape}.

\item 

The recently improved analysis of the transversal numbers $C_d(p,q)$ that arise in the Hadwiger-Debrunner problem (see Section \ref{sec:intro}), due to Keller, Smorodinsky, and Tardos \cite{Shakhar}, implies that

\begin{equation}\label{Eq:Chaya}
C_d(p,q)\leq f_d\left(\Omega\left(p^{-1-\frac{d-1}{q-d}}\right)\right).
\end{equation}

Here, as before, $f_d(\eps)$ denotes the smallest possible number $f$ with the property that any $n$-point set $P\subset \reals^d$ admits a weak $\eps$-net of cardinality $f$ with respect to convex sets.

Plugging the result of Theorem \ref{Thm:Main} into 
(\ref{Eq:Chaya}) yields an improved bound in dimension $d=2$:

$$
C_2(p,q)=O\left(p^{(3/2+\gamma)\left(1+\frac{1}{q-2}\right)}\right)
$$
 
for any constant parameter $\gamma>0$, and $p$ larger than a certain constant threshold which depends on $\gamma$.

\item As the primary focus of this study is on the combinatorial aspects of weak $\eps$-nets, we did not seek to optimize the construction cost of our net $Q$.

A straightforward implementation of the recursive construction of $Q$ runs in time $\tilde{O}\left({{n^2}/{\sqrt{\eps}}}\right)$. The construction of $\Sigma(\R_1)$ from the sample $\R_1\subset \L(\Pi)$, the assignment of the points of $P$ to the trapezoidal cells, and the zones of the lines of $\L(\Pi)$, can all be performed using the standard textbook algorithms \cite{Mulmuley,SA}. Most of the running time is spent on explicitly maintaining the restriction graphs $\Pi$ along with the sparse graphs of Section \ref{Subsec:3}, and tracing the zones of the lines of $\L(\Pi)$ in $\Sigma(r_1)$. 

\end{itemize}

\paragraph{Acknowledgement.} The author would like to thank J\'{a}nos Pach, Micha Sharir and G\'{a}bor Tardos for their numerous invaluable comments on the early versions of this paper.
In particular, he is indebted to G\'{a}bor Tardos for pointing out that Proposition \ref{Prop:Narrow1} extends to vertical decompositions, which substantially simplified the subsequent exposition.

In addition, the author would like to thank an anonymous Journal of the ACM referee for suggestions which helped to improve the presentation.

\end{document}